\documentclass{amsart}

\usepackage{pgf,tikz,pgfplots}
\pgfplotsset{compat=1.15}
\usepackage{mathrsfs}
\usetikzlibrary{arrows}

\usepackage[T1]{fontenc}
\usepackage{microtype}
\usepackage{lmodern}
\usepackage[colorlinks=true,urlcolor=blue, citecolor=red,linkcolor=blue,linktocpage,pdfpagelabels, bookmarksnumbered,bookmarksopen]{hyperref}
\usepackage[hyperpageref]{backref}
\usepackage{amsthm} 
\usepackage{latexsym,amsmath,amssymb}

\usepackage{accents}
\usepackage{esint}

\usepackage{soul}
\usepackage{mathtools} 
\usepackage{xparse} 
\usepackage[capitalize]{cleveref}

\usepackage[shortlabels]{enumitem}

\setlength\parindent{0pt}

\usepackage[nomargin,inline,marginclue,draft]{fixme}
\FXRegisterAuthor{n}{nlang}{\color{red} Nicole}
\fxusetheme{color}
\usepackage{layout} 
\usepackage[top=3cm, bottom=3cm, left=2cm, right=2cm]{geometry} 

\usepackage[textsize=small]{todonotes}
\setlength{\marginparwidth}{2.1cm}

\newcommand{\hdg}{\star}

\title[$n$-Laplace systems with antisymmetric potentials]{Regularizing properties of $n$-Laplace systems with antisymmetric potentials in Lorentz spaces}
\author{Dorian Martino}
\address[Dorian Martino]{Institut de Mathématiques de Jussieu, Université Paris Cité, Bâtiment
	Sophie Germain, 75205 Paris Cedex 13, France}
\email{dorian.martino@imj-prg.fr}

\author{Armin Schikorra}
\address[Armin Schikorra]{Department of Mathematics,
	University of Pittsburgh,
	301 Thackeray Hall,
	Pittsburgh, PA 15260, USA}
\email{armin@pitt.edu}

\setcounter{tocdepth}{1}

\newcommand{\N}{{\mathbb N}}

\renewcommand{\S}{{\mathbb S}}

\newtheorem{theorem}{Theorem}
\newtheorem{lemma}[theorem]{Lemma}
\newtheorem{corollary}[theorem]{Corollary}
\newtheorem{proposition}[theorem]{Proposition}

\theoremstyle{definition}

\newtheorem{question}[theorem]{Question}

\theoremstyle{remark}


\newcommand\osc{\mathop{\rm osc\,}}

\newcommand\curl{{\rm curl\,}}

\newcommand\supp{{\rm supp\,}}

\newcommand{\g}{\nabla}
\newcommand{\dr}{\partial}
\newcommand{\di}{\mathrm{div }}


\newcommand{\R}{\mathbb{R}}

\newcommand{\Nr}{\mathcal{N}}

\newcommand{\brac}[1]{\left (#1 \right )}
\newcommand{\abs}[1]{\left\lvert #1 \right \rvert}

\newcommand{\Ep}{\bigwedge\nolimits}

\newcommand{\scal}[2]{\left\langle #1,#2 \right\rangle}
\newcommand{\inter}[2]{[\![#1,#2]\!]}


\newcommand{\barint}{
	\rule[.036in]{.12in}{.009in}\kern-.16in \displaystyle\int }

\newcommand{\barcal}{\text{$ \rule[.036in]{.11in}{.007in}\kern-.128in\int $}}




\def\mvint_#1{\mathchoice
	{\mathop{\vrule width 6pt height 3 pt depth -2.5pt
			\kern -8pt \intop}\nolimits_{\kern -3pt #1}}%
{\mathop{\vrule width 5pt height 3 pt depth -2.6pt
		\kern -6pt \intop}\nolimits_{#1}}%
{\mathop{\vrule width 5pt height 3 pt depth -2.6pt
		\kern -6pt \intop}\nolimits_{#1}}%
{\mathop{\vrule width 5pt height 3 pt depth -2.6pt
		\kern -6pt \intop}\nolimits_{#1}}}


		\numberwithin{theorem}{section} \numberwithin{equation}{section}

\newcommand{\lap}{\Delta }
\newcommand{\aleq}{\lesssim}

\newcommand{\aeq}{\approx}

\newcommand{\Rz}{\mathcal{R}}
\newcommand{\laps}[1]{|D|^{#1}}

\newcommand{\lapms}[1]{\mathcal{I}_{#1}}


\usepackage{scalerel}[2014/03/10]
\usepackage[usestackEOL]{stackengine}
\def\avint{\,\ThisStyle{\ensurestackMath{%
		\stackinset{c}{.2\LMpt}{c}{.5\LMpt}{\SavedStyle-}{\SavedStyle\phantom{\int}}}%
	\setbox0=\hbox{$\SavedStyle\int\,$}\kern-\wd0}\int}
	
	\renewcommand{\div}{\operatorname{div}}
	

	\let\latexchi\chi
	\makeatletter
	\renewcommand\chi{\@ifnextchar_\sub@chi\latexchi}
	\newcommand{\sub@chi}[2]{
\@ifnextchar^{\subsup@chi{#2}}{\latexchi^{}_{#2}}%
}
\newcommand{\subsup@chi}[3]{
\latexchi_{#1}^{#3}%
}
\makeatother
\newcommand{\eps}{\varepsilon}
\newcommand{\ve}{\varepsilon}
\newcommand{\vp}{\varphi}

\begin{document}
\begin{abstract}
	We show continuity of solutions $u \in W^{1,n}(B^n,\mathbb{R}^N)$ to the system
	\[
	-{\rm div} (|\nabla u|^{n-2} \nabla u) = \Omega \cdot |\nabla u|^{n-2} \nabla u
	\]
	when $\Omega$ is an $L^n$-antisymmetric potential -- and additionally satisfies a Lorentz-space assumption.
	
	To obtain our result we study a rotated n-Laplace system
	\[
	-{\rm div} (Q|\nabla u|^{n-2} \nabla u) = \tilde{\Omega} \cdot |\nabla u|^{n-2} \nabla u,
	\]
	where $Q \in W^{1,n}(B^n,SO(N))$ is the Coulomb gauge which ensures improved Lorentz-space integrability of $\tilde{\Omega}$. Because of the matrix-term $Q$, this system does not fall directly into Kuusi--Mingione's vectorial potential theory. However, we adapt ideas of their theory together with Iwaniec' stability result to obtain $L^{(n,\infty)}$-estimates of the gradient of a solution which, by an iteration argument leads to the regularity of solutions. As a corollary of our argument we see that $n$-harmonic maps into manifolds are continuous if their gradient belongs to the Lorentz-space $L^{(n,2)}$ -- which is a trivial and optimal assumption if $n=2$, and the weakest assumption to date for the regularity of critical $n$-harmonic maps, without any added differentiability assumption. We also prove a corresponding result for $n$-Laplace $H$-systems.
\end{abstract}

\maketitle
\tableofcontents

\section{Introduction}
Critical harmonic maps between a Euclidean ball $B \subset \R^2$ and a closed manifold without boundary $\mathcal{N}$, i.e. critical points of the Dirichlet energy subject to the previous mapping conditions, satisfy the Euler-Lagrange equation
\[
-\lap u = A(u) (\nabla u,\nabla u) \quad \text{in $B$}.
\]
Here $A(u)$ denotes the second fundamental form of the manifold $\mathcal{N} \subset \R^N$.

Since $u \in W^{1,2}(B,\R^N)$, the right-hand side of this equation is merely integrable, and one might be lead to believe that $u$ could be discontinuous, since the scalar model equation $|\lap u| = |\nabla u|^2$ has discontinuous solution $\log \log 1/|x|$.

However, solutions are continuous -- as was shown in H\'elein's celebrated \cite{Helein1,Helein2}. The reason are cancellation effects of div-curl type. Namely, if $\mathcal{N} = \S^{N-1}$ is the unit sphere, H\'elein showed that with Shatah's conservation law \cite{Shatah88} the right-hand side can be written as
\begin{equation}\label{eq:heleinpde}
	-\lap u^i = \Omega_{ij}\cdot \nabla u \quad \text{in $B$}
\end{equation}
where
\[
\Omega_{ij} = u^j \nabla u^i - u^i \nabla u^j
\]
is divergence free, $\div (\Omega_{ij}) = 0$.
The div-curl term on the right hand side of \eqref{eq:heleinpde} implies via the Coifman-Lions-Meyer-Semmes \cite{CLMS} that $\lap u$ belongs to the Hardy space $\mathcal{H}^1$ and then one concludes by Calder\'on-Zygmund theory that a solution $u$ to \eqref{eq:heleinpde} actually belongs to $W^{2,1}_{loc}(B) \subset C^0(B)$. H\'elein then extended this to the general smooth manifold case in \cite{Helein2}, using a moving frame method to obtain an approximate div-curl structure.
More than fifteen years later, Rivi\`ere showed in his celebrated work \cite{R07} that the harmonic map equation -- and actually Euler-Lagrange equations for a large class of conformally invariant variational functionals -- can be written in the form
\begin{equation}\label{eq:riviere1}
	-\lap u^i = \Omega_{ij}\cdot \nabla u^j,
\end{equation}
where $\Omega_{ij} \in L^2(B,\R^N)$ may not be divergence free -- but  it is antisymmetric,
\[
\Omega_{ij} = -\Omega_{ji}.
\]
He then used Uhlenbeck's Coulomb gauge \cite{U82}, namely a rotation $Q \in W^{1,2}(B,SO(N))$, to transform the equation \eqref{eq:riviere1} into the form
\begin{equation}\label{eq:riviere2}
	-\div (Q \nabla u) = \Omega_Q Q \nabla u
\end{equation}
where
\[
\Omega_Q = Q \nabla Q^T + Q \Omega Q^T
\]
is divergence free. This is good, because the (approximate) div-curl structure on the right-hand side of \eqref{eq:riviere2} is enough to obtain continuity of solutions, cf. \cite{RS08}.

Since the first results of H\'elein \cite{Helein1,Helein2}, attempts were made to generalize this to $n$-Laplace situation. This is a natural question since for $n$-dimensional domains $B \subset \R^n$ the energy $\int_{B} |\nabla u|^n$ is a conformally invariant variational functional. The Euler-Lagrange equations change only slightly,
\begin{equation}\label{eq:nharmmap}
	-\div(|\nabla u|^{n-2} \nabla u) = |\nabla u|^{n-2} A(u) (\nabla u,\nabla u) \quad \text{in $B$}.
\end{equation}
And indeed, for the sphere case $\mathcal{N} = \S^{N-1}$ it is possible to extend H\'elein's approach which was observed by several authors independently \cite{F93,S94,MY96}.
But already for general manifolds the regularity theory is wide open until today. In \cite[III.23]{Riv11} Rivi\`ere asked the following question about regularizing effects of antisymmetric potentials for systems.
\begin{question}[Rivi\`ere]
	Is the following true or false? If for $B \subset \R^n$ we have a solution $u \in W^{1,n}(B,\R^N)$ to
	\[
	-\div(|\nabla u|^{n-2} \nabla u^i) = |\nabla u|^{n-2} \Omega_{ij} \nabla u^j \quad \text{in $B$}
	\]
	where
	\begin{equation}\label{eq:Omegaantisym}
		\Omega_{ij} = -\Omega_{ji} \in L^{n}(B,\R^{n}), \quad i,j \in \{1,\ldots,N\}
	\end{equation}
	then $u$ is continuous.
\end{question}

The underlying reason that this is a very difficult question is that we do not know how to deal with an \emph{approximate} div-curl system, i.e. when the right-hand side is a div-curl term times an $L^\infty \cap W^{1,n}$-map. Also the Hardy-space looses its relevance, \cite{F95}. See \cite{SS17} for a survey over attempts and description of the problems encountered.

There have been several partial results regarding the above question, and we refer again to \cite{SS17} and the recent \cite{MPS22} for an overview. What these results have in common is that they assume additional $W^{1+s,\frac{n}{1+s}}$-regularity for some $s > 0$.

We will not answer the question by Rivi\`ere in this work, but we present a regularity result that does not require any additional differentiability, but only an improved integrability on the zero-order curl of $\Omega$. Denote by $\Rz_\alpha = \partial_\alpha (-\lap)^{-\frac{1}{2}}$ the Riesz transforms (zero-order $\alpha$-derivatives). By $L^{(p,q)}$ we denote the Lorentz space, cf. \Cref{s:lorentzspace}. Then we have the following result

\begin{theorem}\label{th:main1}
	Let $B \subset \R^n$ be a ball. Assume that $u \in W^{1,n}(B,\R^N)$ solves the equation
	\[
	-\div (|\nabla u|^{n-2} \nabla u^i) = \Omega_{ij}\cdot |\nabla u|^{n-2} \nabla u^j \quad \text{in $B$}
	\]
	where we assume antisymmetry, i.e. \eqref{eq:Omegaantisym}, boundedness in $L^{(n,2)}$
	\[
	\|\Omega\|_{L^{(n,2)}(B)} <\infty,
	\]
	and additionally a zero-order curl condition
	\begin{equation}\label{eq:curlcond}
		\max_{i,j = 1,\ldots,N}\max_{\alpha,\beta \in \{1,\ldots,n\}}\|\Rz_\alpha \Omega_{ij}^\beta - \Rz_\beta \Omega_{ij}^\alpha\|_{L^{(n,1)}(B)} < \infty
	\end{equation}
	Then $u$ is continuous in $B$.
\end{theorem}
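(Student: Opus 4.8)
I would follow Rivière's gauge-theoretic scheme, replacing the linear Hodge--Wente machinery available in dimension two by the nonlinear potential theory of Kuusi--Mingione together with Iwaniec's stability estimate for the $n$-Laplacian, and keeping all function-space bookkeeping on the Lorentz scale. \emph{Step 1 (Coulomb gauge).} Adapting Uhlenbeck's construction to the Lorentz scale, build $Q\in W^{1,n}(B,SO(N))$ with $\div(\tilde\Omega)=0$ distributionally, where $\tilde\Omega:=Q\nabla Q^T+Q\Omega Q^T$, together with $\|\nabla Q\|_{L^{(n,2)}(B)}+\|\tilde\Omega\|_{L^{(n,2)}(B)}\lesssim\|\Omega\|_{L^{(n,2)}(B)}$; this is the only place the hypothesis $\Omega\in L^{(n,2)}$, rather than merely $L^n$, is used. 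Then I would upgrade the integrability of $\tilde\Omega$: being divergence free, $\tilde\Omega$ is, locally and modulo smoother terms, a first-order Riesz potential of $d\tilde\Omega = dQ\wedge\nabla Q^T + dQ\wedge(\Omega Q^T) + Q\,(d\Omega)\,Q^T - Q\Omega\wedge dQ^T$. The three Jacobian-type terms lie in $L^{(n/2,1)}_{\loc}\subset\dot{W}^{-1,(n,1)}_{\loc}$ by the Lorentz--Hölder inequality $L^{(n,2)}\cdot L^{(n,2)}\hookrightarrow L^{(n/2,1)}$ and the Hardy--Littlewood--Sobolev estimate $|D|^{-1}\colon L^{(n/2,1)}\to L^{(n,1)}$; the term involving $d\Omega$ is controlled --- after conjugation by $Q\in W^{1,(n,2)}\cap L^\infty$, a multiplication that preserves $\dot{W}^{-1,(n,1)}$ --- by the zero-order curl hypothesis \eqref{eq:curlcond}, which asserts precisely that $d\Omega\in\dot{W}^{-1,(n,1)}$. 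Elliptic regularity then gives $\tilde\Omega\in L^{(n,1)}_{\loc}(B)$, and the $L^{(n,1)}$-norm over shrinking balls tends to $0$.

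\emph{Step 2 (the rotated system and its potential estimates).} With $V(\xi):=|\xi|^{n-2}\xi$ the equation reads $-\div(Q\,V(\nabla u)) = \tilde\Omega\cdot V(\nabla u)$ in $B$. Because $|V(\nabla u)| = |\nabla u|^{n-1}\in L^{(n',\infty)}_{\loc}$ and $\tilde\Omega\in L^{(n,1)}_{\loc}$, the Lorentz--Hölder inequality $L^{(n,1)}\cdot L^{(n',\infty)}\hookrightarrow L^1$ shows the right-hand side is a function $f\in L^1_{\loc}(B)$ with $\|f\|_{L^1(B_r(x_0))}\lesssim\|\tilde\Omega\|_{L^{(n,1)}(B_r(x_0))}\,\|\nabla u\|_{L^{(n,\infty)}(B_r(x_0))}^{n-1}$. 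The operator $\xi\mapsto Q(x)V(\xi)$ is a rotation of the $n$-Laplacian by the VMO coefficient $Q$ (since $W^{1,(n,2)}\hookrightarrow W^{1,n}\hookrightarrow\mathrm{VMO}$), so it is not covered directly by Kuusi--Mingione's (scalar or diagonal) theory. To handle it, on a small ball $B_r(x_0)$ I would freeze $Q$ at $x_0$ and compare $u$ with the $n$-harmonic replacement $v$ ($-\div(V(\nabla v))=0$ in $B_r(x_0)$, $v=u$ on $\partial B_r(x_0)$); the difference solves $-\div\big(V(\nabla u)-V(\nabla v)\big) = Q(x_0)^Tf + \div\big(Q(x_0)^T(Q-Q(x_0))V(\nabla u)\big)$. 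Using the monotonicity $\langle V(\xi)-V(\eta),\xi-\eta\rangle\gtrsim|\xi-\eta|^n$ together with Iwaniec's stability estimate --- which keeps the comparison constant uniform --- one controls $V(\nabla u)-V(\nabla v)$ by a truncated Riesz potential of $f$ plus $\|Q-Q(x_0)\|_{\mathrm{VMO}(B_r(x_0))}$ times the relevant norm of $V(\nabla u)$. Feeding in the $C^{1,\alpha}$ a priori estimate for the $n$-harmonic $v$, and iterating dyadically while exploiting that $\|\tilde\Omega\|_{L^{(n,1)}(B_r)}$ and $[\nabla Q]_{\mathrm{VMO}(B_r)}$ are small on small balls, one first bootstraps $\nabla u$ into a Lorentz space $L^{(n,q)}_{\loc}(B)$ with $q<\infty$ --- hence with absolutely continuous norm --- which makes the right-hand-side quantities in all of the above estimates tend to $0$ over shrinking balls.

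\emph{Step 3 (continuity).} With the improved integrability in hand, the excess-decay inequality produced in Step~2 reads, for a Campanato-type excess $\Phi(x_0,\cdot)$ of $\nabla u$,
\[
\Phi(x_0,\rho)\;\le\;C\Big(\tfrac{\rho}{r}\Big)^{\alpha}\Phi(x_0,r)\;+\;C\,\omega(r)\,\Phi(x_0,r),\qquad 0<\rho<r,
\]
where $\omega(r)\to0$ as $r\to0$ uniformly for $x_0$ in compact subsets (here $\omega(r)$ collects $\|\tilde\Omega\|_{L^{(n,1)}(B_r(x_0))}$, $\|\nabla u\|_{L^{(n,q)}(B_r(x_0))}$ and $[\nabla Q]_{\mathrm{VMO}(B_r(x_0))}$, and the Riesz-potential term is absorbed thanks to the gradient factor $V(\nabla u)$ appearing in $f$). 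A standard iteration lemma then yields $\Phi(x_0,\rho)\lesssim\rho^{\delta}$ for some $\delta>0$, hence $\int_{B_\rho(x_0)}|\nabla u|^n\lesssim\rho^{n+n\delta}$ on compact subsets, and Morrey's Dirichlet-growth theorem gives $u\in C^{0,\delta}_{\loc}(B)$; in particular $u$ is continuous in $B$.

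\emph{Main obstacle.} The crux is the potential-theoretic Step~2: the matrix factor $Q$ in front of $|\nabla u|^{n-2}\nabla u$ places the system outside the scope of the existing vectorial potential theory, so the comparison and excess-decay estimates must be rebuilt, and it is exactly here that Iwaniec's stability result is indispensable --- in order to pass between the frozen and the unfrozen operator while keeping the constants sharp enough for the smallness of $\|\tilde\Omega\|_{L^{(n,1)}}$ and the VMO-oscillation of $Q$ on small balls to be exploited in the iteration. A secondary difficulty is the Lorentz upgrade of $\tilde\Omega$ in Step~1, which is delicate precisely because it rests on the interplay of the gauge equation, the curl hypothesis \eqref{eq:curlcond}, and the mapping properties of multiplication by $W^{1,(n,2)}$-maps on negative-order Lorentz--Sobolev spaces.
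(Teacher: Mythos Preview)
Your Step~1 is essentially the paper's reduction, \Cref{co:reductiontoLn1}: the Uhlenbeck gauge $Q$ is constructed, the bootstrap $\|\nabla Q\|_{L^{(n,2)}}\aleq\|\Omega\|_{L^{(n,2)}}$ is run through the commutator identity $\laps{1}Q^T=Q^T[\Rz\cdot,Q](\nabla Q^T)+Q^T\Rz\cdot(Q\Omega Q^T)$, and then $\|\tilde\Omega\|_{L^{(n,1)}}$ follows from $\Rz\cdot\tilde\Omega=0$ together with the Coifman--Rochberg--Weiss estimate in Lorentz spaces (\Cref{th:crwlorentz}) and the hypothesis \eqref{eq:curlcond}. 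Your phrasing via $d\tilde\Omega$ and negative-order spaces is equivalent in content.

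The gap is in Step~2. Once you freeze $Q$ at $x_0$ and write $-\div(V(\nabla u)-V(\nabla v))=Q(x_0)^T f+\div G$ with $f=\tilde\Omega\cdot V(\nabla u)\in L^1$, the natural test function coming from monotonicity is $u-v\in W^{1,n}_0$, which is \emph{not} in $L^\infty$; pairing with an $L^1$ right-hand side is exactly the obstruction, and your sentence ``monotonicity together with Iwaniec's stability\ldots controls $V(\nabla u)-V(\nabla v)$ by a truncated Riesz potential of $f$'' does not explain how this is overcome. Pointwise Riesz-potential bounds of Kuusi--Mingione type rest on the $p$-harmonic approximation lemma, and the paper states explicitly that this lemma does \emph{not} seem to extend to the operator $\div(Q|\nabla u|^{n-2}\nabla u)$; so invoking it here is precisely the step that would fail. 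A second issue is that freezing at a \emph{point} gives no smallness for the perturbation $G=Q(x_0)^T(Q-Q(x_0))V(\nabla u)$ in $L^{n/(n-1)}$, since $|Q-Q(x_0)|$ is only bounded, not small.

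The paper's substitute is to abandon pointwise Riesz-potential control and instead produce an \emph{admissible} test function: one Hodge-decomposes $Q|\nabla(u-v)|^{-\eps}\nabla(u-v)=\nabla a+B$ with $a\in W^{1,\frac{n-\eps}{1-\eps}}_0\hookrightarrow L^\infty$ (so $\int f\,a$ makes sense) and $\|B\|_{L^{\frac{n-\eps}{1-\eps}}}\aleq(\eps+[Q]_{BMO})\|\nabla(u-v)\|_{L^{n-\eps}}^{1-\eps}$, the smallness of $B$ coming from Iwaniec's stability (\Cref{th:iwaniec}) and \Cref{th:crw}. This yields an $L^{n-\eps}$ comparison estimate (\Cref{lm:distance_harm}) which, combined with a second Iwaniec-type estimate (\Cref{la:easyiwaniec}) and a sharp-maximal-function/covering argument, gives the $L^{(n,\infty)}$ bound of \Cref{Lninfty_estimate}. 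The iteration in \Cref{pr:iteration} then runs on the \emph{pair} of quantities $\|\nabla u\|_{L^{(n,\infty)}(B_r)}^{n-\eps}$ and $r^{-\eps}\|\nabla u\|_{L^{n-\eps}(B_r)}^{n-\eps}$ simultaneously --- there is no intermediate bootstrap of $\nabla u$ into $L^{(n,q)}$ for $q<\infty$ as you propose. So the architecture of your Step~2 is right (compare with the $n$-harmonic replacement, exploit smallness of $\|\tilde\Omega\|_{L^{(n,1)}}$ and of the oscillation of $Q$, iterate), but the mechanism that makes the comparison go through is the $|\nabla w|^{-\eps}$-twisted Hodge decomposition, not freezing plus monotonicity.
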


The antisymmetry assumption \eqref{eq:Omegaantisym} is crucial, since the usual $\log \log 1/|x|$-example can be used to construct counterexamples otherwise, cf. \cite{K10,IO07}.

An immediate corollary for which we have no application whatsoever is the following.
\begin{corollary}\label{co:joke}
	There exists a $\gamma \in (0,1)$ such that the following holds. Let $B \subset \R^n$ be a ball.
	
	Assume that $u \in W^{1,n}(B,\R^N)$ solves the equation
	\[
	\div (|\nabla u|^{n-2} \nabla u^i) = \nabla a_{ij} |\nabla u|^{n-2} \nabla u^j \quad \text{in $B$}
	\]
	where $\nabla a_{ij} \in L^{(n,2)}(B)$ with
	\[
	\nabla a_{ij} = -\nabla a_{ji} \quad i,j \in \{1,\ldots,N\}
	\]
	Then $u$ is continuous in $B$.
\end{corollary}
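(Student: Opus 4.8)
The plan is to deduce \Cref{co:joke} directly from \Cref{th:main1}: the corollary is nothing but the special case in which the antisymmetric potential is, componentwise, a gradient, and for such a potential the zero-order curl in \eqref{eq:curlcond} vanishes identically. First I would set $\Omega_{ij} \defeq -\nabla a_{ij}$, so that the equation of \Cref{co:joke}, i.e. $\div(|\nabla u|^{n-2}\nabla u^i) = \nabla a_{ij}\cdot |\nabla u|^{n-2}\nabla u^j$, takes the form required by \Cref{th:main1}. The sign is harmless: \eqref{eq:Omegaantisym} holds since $\nabla a_{ij} = -\nabla a_{ji}$, hence $\Omega_{ij} = -\Omega_{ji}$, and $\|\Omega\|_{L^{(n,2)}(B)} = \|\nabla a\|_{L^{(n,2)}(B)} < \infty$ is exactly the assumption on $a$. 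Thus only the curl condition \eqref{eq:curlcond} remains to be checked.

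For that, I would just observe that $\Omega_{ij}^\beta = -\partial_\beta a_{ij}$ and that $\Rz_\alpha = \partial_\alpha(-\Delta)^{-1/2}$, so that for every $\alpha,\beta \in \{1,\ldots,n\}$
\[
\Rz_\alpha \Omega_{ij}^\beta - \Rz_\beta \Omega_{ij}^\alpha = -\Bigl( \partial_\alpha(-\Delta)^{-1/2}\partial_\beta - \partial_\beta(-\Delta)^{-1/2}\partial_\alpha \Bigr) a_{ij} = 0,
\]
because $\partial_\alpha$, $\partial_\beta$ and $(-\Delta)^{-1/2}$ are Fourier multipliers and hence commute. Consequently the left-hand side of \eqref{eq:curlcond} is identically $0$, so in particular it is finite in $L^{(n,1)}$, and \Cref{th:main1} applies and yields continuity of $u$.

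The only point requiring any care --- and it is entirely routine, so I do not expect a genuine obstacle --- is that the Riesz transforms in \eqref{eq:curlcond} act on functions on $\R^n$, whereas $a_{ij}$ is defined only on $B$. I would deal with this precisely as in the proof of \Cref{th:main1}: fix a smaller ball $B' \Subset B$, replace $\nabla a_{ij}$ by $\eta\,\nabla a_{ij}$ for a cutoff $\eta \in C_c^\infty(B)$ with $\eta \equiv 1$ on a neighborhood of $\overline{B'}$, and note that the cutoff produces only the extra curl term $\partial_\alpha\eta\,\partial_\beta a_{ij} - \partial_\beta\eta\,\partial_\alpha a_{ij}$, which belongs to $L^{(n,2)}(\R^n)$ and is supported in $\{\nabla\eta \neq 0\}$, a set at positive distance from $B'$; its Riesz potential is therefore smooth --- hence in $L^{(n,1)}$ --- on $B'$. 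Since $B' \Subset B$ is arbitrary and continuity is a local property, this gives continuity of $u$ throughout $B$. Finally, the parameter $\gamma \in (0,1)$ appears in neither the hypotheses nor the conclusion, so any value of it works; this vacuity is precisely why the corollary ``has no application whatsoever'', while still being a bona fide consequence of \Cref{th:main1}.
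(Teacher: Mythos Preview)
Your proposal is correct and is precisely the argument the paper has in mind: since the paper states \Cref{co:joke} as ``an immediate corollary'' of \Cref{th:main1} without giving a separate proof, the intended reasoning is exactly that $\Omega_{ij} = -\nabla a_{ij}$ is antisymmetric, lies in $L^{(n,2)}$, and has vanishing zero-order curl because $\Rz^\perp$ annihilates gradients. Your handling of the localization (cutoff producing a curl error supported away from $B'$, hence with smooth Riesz image there) is in fact more careful than anything the paper writes out, and your remark that the dangling $\gamma$ is vacuous --- consistent with the label \texttt{co:joke} and the authors' own comment that the corollary has ``no application whatsoever'' --- is on point.
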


More relevantly, we observe that by Sobolev embedding \eqref{eq:curlcond} is in particular satisfied if $\|\curl \Omega\|_{L^{\frac{n}{2},1}} < \infty$. From the H\"older inequality for Lorentz-spaces we conclude the following corollary.
\begin{corollary}\label{co:joke2}
	There exists a $\gamma \in (0,1)$ such that the following holds. Let $B \subset \R^n$ be a ball.
	
	Assume that $u \in W^{1,n}(B,\R^N)$ solves the equation
	\[
	\div (|\nabla u|^{n-2} \nabla u^i) = \Omega_{ij}\cdot |\nabla u|^{n-2} \nabla u^j \quad \text{in $B$}
	\]
	where we assume antisymmetry, i.e. \eqref{eq:Omegaantisym}, and additionally for each $i,j$ we assume that
	\[\Omega_{ij} = B_{ij}\, \nabla a_{ij}  \]
	with $B_{ij} \in L^\infty$, $\nabla B_{ij}$, $\nabla a_{ij} \in L^{(n,2)}$.
	
	Then $u$ is continuous in $B$.
\end{corollary}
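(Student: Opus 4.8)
The plan is to verify that the factorization $\Omega_{ij}=B_{ij}\nabla a_{ij}$ forces the two hypotheses of \Cref{th:main1} and then to invoke that theorem. The sign in front of the divergence plays no role: replacing $\Omega$ by $-\Omega$ leaves the antisymmetry \eqref{eq:Omegaantisym} and every norm below unchanged, so we may read the equation as $-\div(|\nabla u|^{n-2}\nabla u^i)=\Omega_{ij}\cdot|\nabla u|^{n-2}\nabla u^j$. When $n=2$ we have $L^{(n,2)}=L^2$ and $|\nabla u|^{n-2}\equiv 1$, and the statement is precisely the classical continuity theorem of Rivi\`ere \cite{R07} for $L^2$-antisymmetric potentials; hence we may assume $n\ge 3$. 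It then remains to check the $L^{(n,2)}$-bound on $\Omega$ and the zero-order curl condition \eqref{eq:curlcond}.

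For the first, I would use the H\"older inequality for Lorentz spaces in the form $L^\infty\cdot L^{(n,2)}\hookrightarrow L^{(n,2)}$: since $B_{ij}\in L^\infty$ and $\nabla a_{ij}\in L^{(n,2)}$ this gives $\|\Omega_{ij}\|_{L^{(n,2)}(B)}\le\|B_{ij}\|_{L^\infty}\|\nabla a_{ij}\|_{L^{(n,2)}(B)}<\infty$. For the curl condition the point is the cancellation of the second-order term in $\div$: for $\alpha,\beta\in\{1,\dots,n\}$ one computes, in the sense of distributions (which is legitimate because $B_{ij}\in L^\infty$ and $a_{ij}\in W^{1,1}_{\loc}$, so every product below is locally integrable and the Leibniz rule applies),
\[
\partial_\alpha\Omega_{ij}^\beta-\partial_\beta\Omega_{ij}^\alpha=\partial_\alpha B_{ij}\,\partial_\beta a_{ij}-\partial_\beta B_{ij}\,\partial_\alpha a_{ij},
\]
the terms $B_{ij}\,\partial_\alpha\partial_\beta a_{ij}$ cancelling. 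Another application of H\"older, now $L^{(n,2)}\cdot L^{(n,2)}\hookrightarrow L^{(n/2,1)}$ (admissible since $\tfrac{1}{n}+\tfrac{1}{n}=\tfrac{2}{n}$, $\tfrac{1}{2}+\tfrac{1}{2}=1$, and $n/2>1$), yields $\|\curl\Omega_{ij}\|_{L^{(n/2,1)}(B)}\aleq\|\nabla B_{ij}\|_{L^{(n,2)}(B)}\,\|\nabla a_{ij}\|_{L^{(n,2)}(B)}<\infty$.

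To deduce \eqref{eq:curlcond} from this I would fix $B'\Subset B$, extend each $\Omega_{ij}$ to a compactly supported field on $\R^n$ whose $L^{(n/2,1)}$-curl norm and $L^{(n,2)}$-norm are controlled by the corresponding norms on $B$, and then note that the off-diagonal Riesz-transform combination is exactly the Riesz potential of order $1$ of the curl,
\[
\Rz_\alpha\Omega_{ij}^\beta-\Rz_\beta\Omega_{ij}^\alpha=(-\lap)^{-\frac12}\!\left(\partial_\alpha\Omega_{ij}^\beta-\partial_\beta\Omega_{ij}^\alpha\right).
\]
Since $(-\lap)^{-\frac12}$ maps $L^{(n/2,1)}(\R^n)$ continuously into $L^{(n,1)}(\R^n)$ — the Lorentz refinement of the Sobolev embedding $\dot W^{1,n/2}\hookrightarrow L^n$, with the fine index preserved because $1<n/2<n$ — condition \eqref{eq:curlcond} holds on $B'$. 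All hypotheses of \Cref{th:main1} are now met on $B'$, so $u$ is continuous there; as $B'\Subset B$ was arbitrary, $u\in C^0(B)$, and the modulus of continuity (hence the exponent $\gamma$ in the statement) is the one produced by the proof of \Cref{th:main1}.

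I expect the only spot needing genuine care to be the last paragraph: one has to arrange the localization/extension so as not to spoil the $L^{(n/2,1)}$-bound on the curl, and it is exactly the exclusion of the endpoint $p=1$ in the mapping property of $(-\lap)^{-\frac12}$ that forces the separate, classical treatment of $n=2$ rather than this one. Everything else is H\"older's inequality together with the Leibniz rule, so there should be no real obstacle; the mathematical content of the corollary lies entirely in \Cref{th:main1}.
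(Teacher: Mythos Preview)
Your proposal is correct and follows exactly the route the paper indicates in the sentence preceding the corollary: H\"older in Lorentz spaces gives $\curl\Omega_{ij}\in L^{(n/2,1)}$ from $\nabla B_{ij},\nabla a_{ij}\in L^{(n,2)}$, and then the Lorentz--Sobolev embedding for $(-\lap)^{-1/2}$ turns this into the zero-order curl condition \eqref{eq:curlcond}, after which \Cref{th:main1} applies. The paper leaves the argument at that one-line level, so your additional care about the $n=2$ endpoint and the localization/extension step is extra detail rather than a departure in approach.
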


A particular situation to which the structure of \Cref{co:joke2} applies is the harmonic map equation \eqref{eq:nharmmap}. Namely, we obtain the following
\begin{corollary}\label{co:harmonicmap}
	Let $u \in W^{1,n}(B,\mathcal{N})$ be an $n$-harmonic map from an $n$-dimensional ball $B$ into a smooth manifold without boundary $\mathcal{N} \subset \R^N$, i.e. a solution to \eqref{eq:nharmmap}. If we additionally assume $\nabla u \in L^{(n,2)}(B)$ then $u$ is continuous.
\end{corollary}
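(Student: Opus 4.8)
The plan is to put the $n$-harmonic map equation \eqref{eq:nharmmap} into the form required by \Cref{co:joke2} and then invoke that corollary. First I would recall Rivière's algebraic observation (extended to the $n$-Laplace setting by the usual frame computation) that for a map $u \in W^{1,n}(B,\mathcal{N})$ the second fundamental form term can be written, after choosing a local orthonormal frame $(e_1,\dots,e_m)$ of the tangent bundle $T\mathcal{N}$ pulled back along $u$, as
\[
|\nabla u|^{n-2} A(u)(\nabla u,\nabla u)^i = \Omega_{ij}\cdot |\nabla u|^{n-2}\nabla u^j
\]
with $\Omega_{ij} = -\Omega_{ji}$. The standard formula gives $\Omega_{ij} = \langle e_i, \nabla e_j\rangle - \langle e_j, \nabla e_i\rangle$ plus the normal part, and crucially each entry has the product structure $\Omega_{ij} = B_{ij}\,\nabla a_{ij}$ one gets from the chain rule: $B_{ij}$ is a bounded smooth function of $u$ (built from the frame and the embedding data of $\mathcal{N}$, which is smooth and compact, hence these are in $L^\infty$), and $a_{ij}$ is a component of $u$, so $\nabla a_{ij}$ is a component of $\nabla u$. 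Under the hypothesis $\nabla u \in L^{(n,2)}(B)$ we then have $\nabla a_{ij}\in L^{(n,2)}$; and since $B_{ij}=F_{ij}(u)$ for a smooth $F_{ij}$, the chain rule yields $\nabla B_{ij} = DF_{ij}(u)\,\nabla u \in L^{(n,2)}$ as well, because $DF_{ij}(u)$ is bounded. In particular $\Omega \in L^{(n,2)} \cdot L^\infty \subset L^{(n,2)}$ and the antisymmetry \eqref{eq:Omegaantisym} holds.

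With this, all the hypotheses of \Cref{co:joke2} are verified — antisymmetry, the factorization $\Omega_{ij}=B_{ij}\,\nabla a_{ij}$ with $B_{ij}\in L^\infty$ and $\nabla B_{ij},\nabla a_{ij}\in L^{(n,2)}$ — so \Cref{co:joke2} directly gives continuity of $u$ in $B$, completing the proof.

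The one point requiring care, and the main (minor) obstacle, is the passage from the intrinsic form \eqref{eq:nharmmap} to the antisymmetric-potential form with the precise product structure: the moving-frame construction is only local and one must check the frame can be chosen in $W^{1,n}$ on small balls (which is classical — e.g. via Hélein's or Rivière's frame lemma, the $n$-dimensional analogue being available in the literature cited in the introduction) and that no derivative of $u$ higher than first order enters the coefficients $B_{ij}$. Since continuity is a local statement this localization is harmless. Everything else is a direct application of \Cref{co:joke2}.
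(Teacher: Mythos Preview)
Your overall strategy --- rewrite the equation with an antisymmetric potential and feed it into the machinery of \Cref{th:main1} (via \Cref{co:joke2}) --- is exactly the right one, and is what the paper does. However, two points deserve correction.

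First, a formal gap: as stated, \Cref{co:joke2} asks for a \emph{single} product $\Omega_{ij}=B_{ij}\,\nabla a_{ij}$, but what one actually gets for harmonic maps is a \emph{sum} $\Omega_{ij}=\sum_k B_{ijk}\,\nabla u^k$. Your sentence ``$a_{ij}$ is a component of $u$'' cannot be literally true. This is easily repaired --- the proof of \Cref{co:joke2} works verbatim for finite sums, since $\curl(\sum_k B_{ijk}\,\nabla u^k)=\sum_k \nabla B_{ijk}\wedge\nabla u^k\in L^{(n/2,1)}$ by H\"older --- but you should say so explicitly, or better, bypass \Cref{co:joke2} and apply \Cref{th:main1} directly after computing $d\Omega$.

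Second, the paper's route is cleaner than the moving-frame argument you sketch. No frame is needed: since the second fundamental form is normal-valued one has $A^i_{jk}(u)\,\nabla u^i=0$, and therefore
\[
A^i_{jk}(u)\langle\nabla u^j,\nabla u^k\rangle=\big\langle A^i_{jk}(u)\,\nabla u^k-A^j_{ik}(u)\,\nabla u^k,\ \nabla u^j\big\rangle,
\]
so $\Omega_{ij}:=\big(A^i_{jk}(u)-A^j_{ik}(u)\big)\nabla u^k$ is antisymmetric with $d\Omega_{ij}=\big(\partial_\alpha A^i_{jk}-\partial_\alpha A^j_{ik}\big)(u)\,du^\alpha\wedge du^k\in L^{(n/2,1)}$ whenever $\nabla u\in L^{(n,2)}$ and $A\in W^{1,\infty}$. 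Then \Cref{th:main1} applies on small balls (using absolute continuity of the $L^{(n,2)}$-norm for the smallness) and yields continuity. This avoids entirely the existence and $W^{1,n}$-regularity of a local frame, which you flag as the ``main obstacle'' but which is in fact unnecessary.
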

By Sobolev embedding theorem, \Cref{co:harmonicmap} contains in particular all previous partial results for harmonic maps that were discussed in \cite{SS17} and the recent \cite[Theorem 1.2]{MPS22} -- without any assumption on additional differentiability. In particular we obtain a positive answer to \cite[Question 1.3 and Question 1.4]{MPS22}.\\

As for the proof of \Cref{th:main1}, using absolute continuity of norms, choosing the Riviere--Uhlenbeck's Coulomb gauge, \Cref{co:reductiontoLn1}, we are able to reduce \Cref{th:main1} to the following
\begin{theorem}\label{th:regularityLn1}
	Assume that $u \in W^{1,n}(B,\R^N)$ solves the equation
	\begin{equation}\label{eq:divQnpde}
		-\div (Q|\nabla u|^{n-2} \nabla u^i) = \Omega_{ij} \cdot |\nabla u|^{n-2} \nabla u^j \quad \text{in $B$}
	\end{equation}
	where we assume $Q \in W^{1,n}(B,SO(N))$ and $\Omega_{ij} \in L^{(n,1)}(B,\R^n)$. Then $u$ is (H\"older-)continuous in $B$.
\end{theorem}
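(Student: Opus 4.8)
The plan is to localise, rotate \eqref{eq:divQnpde} into a genuine (coercive) $n$-Laplace system with small perturbations, prove an energy-decay estimate by comparison with $n$-harmonic maps, and iterate it into a Morrey growth condition for $\nabla u$.

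\emph{Step 1: localisation and rotation of the system.} Since $W^{1,n}\hookrightarrow \mathrm{VMO}$ and $L^{(n,1)}$ has absolutely continuous norm, after shrinking to a ball $B_{2R}(x_0)\subset B$ we may assume that $\|\nabla Q\|_{L^n}$, $\|\Omega\|_{L^{(n,1)}}$ and the $q$-mean oscillation of $Q$ ($q$ finite, fixed below) on $B_{2R}(x_0)$ are all smaller than a parameter $\varepsilon_0$ to be chosen. For a sub-ball $B_r(x)\subset B_{2R}(x_0)$ let $\bar Q\in SO(N)$ be the nearest-point projection of the average of $Q$ on $B_r(x)$ (well defined once $\varepsilon_0$ is small, as the average lies close to $SO(N)$), and set $v:=\bar Q^{T}u$. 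Because $\bar Q$ is a fixed element of $SO(N)$ one has $|\nabla v|=|\nabla u|$ and $|\nabla v|^{n-2}\nabla v^{i}=\bar Q^{T}_{ik}\,|\nabla u|^{n-2}\nabla u^{k}$, so applying $\bar Q^{T}$ to \eqref{eq:divQnpde} turns it into a \emph{coercive} $n$-Laplace system
\[
-\div\bigl(|\nabla v|^{n-2}\nabla v^{i}\bigr)=\div E^{i}+g^{i}\qquad\text{in }B_r(x),
\]
where $E^i:=\bigl((\bar Q^{T}Q)-\mathrm{Id}\bigr)_{ik}|\nabla v|^{n-2}\nabla v^{k}$, so that $|E|\lesssim|Q-\bar Q|\,|\nabla v|^{n-1}$ with $|Q-\bar Q|$ bounded and of small mean oscillation, and $g^i:=(\bar Q^{T}\Omega)_{ij}\cdot|\nabla v|^{n-2}\nabla v^{j}$. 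The gain is that the rotation acting on the target index has been absorbed; the original operator $\xi\mapsto Q(x)|\xi|^{n-2}\xi$ is not monotone in $\xi$, which is precisely why \eqref{eq:divQnpde} does not fall under Kuusi--Mingione's vectorial potential theory. Moreover, by the Hölder inequality in Lorentz spaces at the endpoint $L^{(n,1)}\cdot L^{(n',\infty)}\hookrightarrow L^{1}$, for every $B_r(x)$,
\[
\int_{B_r(x)}|g|\;\le\;\|\Omega\|_{L^{(n,1)}(B_r(x))}\,\|\nabla v\|_{L^{n}(B_r(x))}^{\,n-1},
\]
which is the one place where the hypothesis $\Omega\in L^{(n,1)}$ (rather than merely $\Omega\in L^{n}$, for which this pairing fails) is genuinely used.

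\emph{Step 2: energy decay by comparison.} On $B_r(x)$ let $h$ solve $-\div(|\nabla h|^{n-2}\nabla h)=0$ with $h-v\in W^{1,n}_{0}(B_r(x))$. By the $C^{1,\alpha}$-theory for the $n$-Laplacian (DiBenedetto, Uhlenbeck), $|\nabla h|$ is locally bounded with $\sup_{B_{r/2}(x)}|\nabla h|^{n}\lesssim |B_r(x)|^{-1}\int_{B_r(x)}|\nabla h|^{n}$, hence for $0<\rho\le r/2$, using minimality of $h$,
\[
\int_{B_\rho(x)}|\nabla h|^{n}\;\lesssim\;\bigl(\tfrac{\rho}{r}\bigr)^{n}\int_{B_r(x)}|\nabla h|^{n}\;\le\;\bigl(\tfrac{\rho}{r}\bigr)^{n}\int_{B_r(x)}|\nabla v|^{n}.
\]
To control $\int_{B_r(x)}|\nabla v-\nabla h|^{n}$ one tests the difference of the two equations with (truncations of) $v-h$, invokes the monotonicity inequality $(|a|^{n-2}a-|b|^{n-2}b)\cdot(a-b)\gtrsim|a-b|^{n}$ valid for $n\ge2$, and is left with contributions of $\div E$ and of $g$. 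The $g$-contribution is estimated through the nonlinear (Wolff) potential of $g$ by the truncation method of Kuusi--Mingione's measure-data theory, which with the bound of Step~1 gives a term $\lesssim \|\Omega\|_{L^{(n,1)}(B_r(x))}^{\,n/(n-1)}\int_{B_r(x)}|\nabla v|^{n}$. The $\div E$-contribution is absorbed using Young's inequality together with Iwaniec's stability estimate for the $n$-harmonic operator and the higher integrability $\nabla v\in L^{n+\sigma}_{\mathrm{loc}}$ (Caccioppoli plus Gehring, the forcing being admissible), combined with the boundedness and small mean oscillation of $Q-\bar Q$; this produces a term $\lesssim \varepsilon(r)\int_{B_r(x)}|\nabla v|^{n}$ with $\varepsilon(r)\to0$ as $r\to0$. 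Altogether, writing $\Theta(x,r):=\int_{B_r(x)}|\nabla u|^{n}$, one obtains
\[
\Theta(x,\rho)\;\le\;C_{0}\Bigl[\bigl(\tfrac{\rho}{r}\bigr)^{n}+\varepsilon(r)\Bigr]\,\Theta(x,r)\qquad\text{for all }0<\rho\le r\le 2R .
\]

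\emph{Step 3: iteration.} Fix $\delta\in(0,1)$, pick $\theta\in(0,\tfrac12)$ with $C_0\theta^{n}\le\tfrac14\theta^{n\delta}$, then $R_0\le R$ so small that $C_0\varepsilon(r)\le\tfrac14\theta^{n\delta}$ for $r\le R_0$ (possible since $r\mapsto\varepsilon(r)$ is increasing and vanishes at $0$, as $\|\Omega\|_{L^{(n,1)}(B_r)}$, $\|\nabla Q\|_{L^n(B_r)}$ and the mean oscillation of $Q$ all tend to $0$). Then $\Theta(x,\theta r)\le\theta^{n\delta}\Theta(x,r)$ for every $x\in B_R(x_0)$ and $r\le R_0$, and iterating gives the Morrey growth bound $\Theta(x,r)\lesssim r^{n\delta}$ uniformly in $x\in B_R(x_0)$. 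By Morrey's Dirichlet-growth theorem this yields $u\in C^{0,\delta}(B_R(x_0))$; since $x_0\in B$ was arbitrary, $u$ is (Hölder-)continuous in $B$. The main obstacle is Step~2: one cannot cite Kuusi--Mingione's estimates directly because of the non-monotone matrix coefficient $Q$, so the measure-data truncation technology must be carried out for the rotated operator while the $Q$-perturbation $\div E$ is controlled simultaneously — this is where the interplay with Iwaniec's stability result and the $\mathrm{VMO}$-boundedness of $Q$ is essential; a secondary subtlety is that the forcing $g=\Omega\cdot|\nabla u|^{n-2}\nabla u$ is only in $L^{1}$, so the pairing $\int g\,(v-h)$ with $v-h\in W^{1,n}_0$ (not $L^\infty$) must be handled by truncation, and closing it forces exactly the endpoint Lorentz bound of Step~1.
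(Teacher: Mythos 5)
There is a genuine gap in Step~2 that the rest of the argument cannot repair, and it is precisely the gap the paper's approach is designed to circumvent. You want to prove the comparison estimate
\[
\int_{B_r}|\nabla v - \nabla h|^{n}\ \lesssim\ \Bigl(\|\Omega\|_{L^{(n,1)}(B_r)}^{n/(n-1)}+\varepsilon(r)\Bigr)\int_{B_r}|\nabla v|^{n}
\]
by testing the difference of the two equations with truncations of $v-h$ and then invoking the monotonicity of the $n$-Laplacian. But the source term $g=\bar Q^{T}\Omega\cdot|\nabla u|^{n-2}\nabla u$ is only in $L^{1}$, and $W^{1,n}_{0}(B_r)$ does \emph{not} embed into $L^{\infty}$ (only into $\exp L$), so the pairing $\int g\,(v-h)$ is not controlled. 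The truncation device à la Boccardo--Gallou\"et / Kuusi--Mingione does not produce an $L^{n}$ bound for $\nabla(v-h)$: from the truncated test functions one obtains $\int_{\{|v-h|<k\}}|\nabla(v-h)|^{n}\lesssim k\|g\|_{L^{1}}+\|E\|_{L^{n'}}^{n'}$, whose right-hand side diverges as $k\to\infty$, and the correct conclusion is a weak-type bound $\|\nabla(v-h)\|_{L^{(n,\infty)}}\lesssim\|g\|_{L^{1}}^{1/(n-1)}+\|E\|_{L^{n'}}$. Since $L^{(n,\infty)}\supsetneq L^{n}$, there is no way to pass from this Marcinkiewicz estimate to the $L^{n}$ excess needed for the Morrey iteration on $\Theta(x,r)=\int_{B_r}|\nabla u|^{n}$. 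For the same reason the claimed higher integrability $\nabla v\in L^{n+\sigma}_{loc}$ by Caccioppoli--Gehring is unavailable: the Caccioppoli test function $\eta^{n}(u-\bar u)$ must be paired with the $L^{1}$ right-hand side, and $u-\bar u$ is only in BMO, not $L^{\infty}$. With an $L^{1}$ forcing one cannot expect more than $\nabla u\in L^{(n,\infty)}_{loc}$; in particular the $\operatorname{div}E$ term (which you estimate via $\|Q-\bar Q\|\cdot|\nabla u|^{n-1}$, and for which you would need integrability of $|\nabla u|^{n-1}$ above $L^{n/(n-1)}$ to exploit the John--Nirenberg smallness of $Q-\bar Q$ in $L^{q}$) cannot be absorbed this way. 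Separately, the substitution $v=\bar Q^{T}u$ is algebraically off: applying $\bar Q^{T}$ to the equation produces $-\operatorname{div}((\bar Q^{T}Q)|\nabla u|^{n-2}\nabla u)=\ldots$, and since $|\nabla v|^{n-2}\nabla v=\bar Q^{T}|\nabla u|^{n-2}\nabla u$, your identity requires $\bar Q^{T}Q\bar Q^{T}$ rather than $\bar Q^{T}Q$; the cleaner route is to drop the substitution, keep $u$, and expand $\bar Q^{T}Q=I+(\bar Q^{T}Q-I)$, which is what produces $E$ without the conjugation error.

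This is exactly why the paper never works at the $L^{n}$ level. Its Lemma~\ref{lm:distance_harm} proves a comparison estimate in $L^{n-\eps}$: after a Hodge decomposition $Q|\nabla(u-v)|^{-\eps}\nabla(u-v)=\nabla a+B$ on the ball, the exact part $a$ lies in $W^{1,\frac{n-\eps}{1-\eps}}_{0}$ with $\frac{n-\eps}{1-\eps}>n$, hence $a\in L^{\infty}$ by Sobolev embedding, and the $L^{1}$ forcing pairs with $a$; the co-exact part $B$ is made small by Iwaniec's stability (\Cref{th:iwaniec}) together with the smallness of $[Q]_{\mathrm{BMO}}$. That $L^{n-\eps}$ estimate feeds a sharp-maximal-function argument (\Cref{estimate_norm_MsGu}--\Cref{goingtoLpq}) giving $L^{(n,\infty)}$ control, and \Cref{pr:iteration} couples the $L^{n-\eps}$ and $L^{(n,\infty)}$ quantities into a single decaying functional, from which continuity follows. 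Your proposal, which runs the Kuusi--Mingione $n$-harmonic approximation after freezing $Q$ and then a classical $L^{n}$ Morrey iteration, is in fact the scheme the authors state they could not carry out (cf.\ the discussion before \Cref{lm:distance_harm}); the obstruction you encounter at the $L^{1}$--$L^{\infty}$ duality is the real one, and it cannot be closed at the $L^{n}$ level.
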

Observe that $\|\nabla Q\|_{L^n(B)} < \infty$ implies in particular that $Q \in VMO$. So in the spirit of linear theory of elliptic equations with VMO-coefficients, see e.g. \cite{IS98}, or scalar $p$-Laplace type equations, see e.g. \cite{BR13}, a result like \Cref{th:regularityLn1} might be expected. But again, our system is not in the standard form of $p$-Lapace systems because of $Q$ being a matrix.

For $Q \equiv I$ and in the scalar case $N=1$, \Cref{th:regularityLn1} was proven by Duzaar--Mingione \cite{DM10}. For $N \geq 2$, but still for $Q \equiv I$, while not being explicitly stated, it follows relatively easily with Kuusi--Mingione vectorial approximation techniques \cite{KM18}. Indeed, \cite{KM18} was a substantial inspiration behind the present work.
The main obstacle is that we have the rotation $Q \in \dot{W}^{1,n}(\R^n,SO(N))$ -- which does not fit in the framework of \cite{KM18} and makes the $n$-Laplace system nonstandard.

For example we were not able to prove a corresponding $n$-harmonic approximation result, \cite[Section 4]{KM18}. Nevertheless, heavily inspired by the Kuusi--Mingione techniques we circumvent the $n$-harmonic approximation argument with the help of Iwaniec' stability result, \cite{IS94}, which leads to an estimate (in our particular situation) that \emph{rhymes} with corresponding Kuusi--Mingione's estimates in \cite[Section 5]{KM18}. From this, by a covering argument we obtain the following Calder\'on-Zygmund type result, which is probably interesting on its own. See \Cref{Lninfty_estimate} for a more flexible statement we are going to use in the proofs.

\begin{proposition}\label{Lninfty_estimateintro}
	There exists a small $\tau > 0$, $\Gamma > 0$ and $\alpha\in (0,1)$
	such that for suitably small $\eps$ and even smaller $\gamma=\gamma(\eps)$ we have the following.
	
	Assume $f\in L^1(B^n(0,1);\R^N)$, $G\in L^\frac{n}{n-1}(B(0,1);\R^n\otimes \R^N)$, $Q\in W^{1,n}(B(0,1);SO(N))$ and $u \in W^{1,n}(B(0,1);\R^N)$ satisfy the system
	\begin{align*}
		-\di(|\g u|^{n-2} Q\g u) = f + \di G\ \text{in }B(0,1)
	\end{align*}
	with the bound
	\[
	\|\g Q\|_{L^n(B(0,1))} \leq \gamma.
	\]
	Then
	\[
	\| \g u \|_{L^{(n,\infty)}(B(0,\tau))} \leq \Gamma \left( \|f\|_{L^1(B(0,1))}^\frac{1}{n-1} + \|G\|_{L^\frac{n}{n-1}(B(0,1))}^\frac{1}{n-1} \right) + \Gamma \|\g u \|_{L^{n-\ve}(B(0,1))}+ \frac{1}{2} \| \g u \|_{L^{(n,\infty)}(B(0,1))}.
	\]
\end{proposition}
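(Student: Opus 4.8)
The plan is to follow the Kuusi--Mingione excess-decay philosophy, but replacing the missing $n$-harmonic approximation step by Iwaniec' stability result, and to extract from a local decay estimate the global $L^{(n,\infty)}$-bound via a covering/maximal-function argument. Since we are aiming for a \emph{weak}-$L^n$ bound on $\nabla u$ with an absorbable error term on the right, the natural quantity to control is, for balls $B(x,r)\subset B(0,1)$, the rescaled excess-type functional
\[
E(x,r) := \left(\barint_{B(x,r)} |\nabla u|^{n-\ve}\right)^{\frac{1}{n-\ve}} + r^{\frac{1}{n-1}}\left(\int_{B(x,r)}|f|\right)^{\frac{1}{n-1}}\!\!\cdot r^{-\frac{n}{n-1}\cdot\frac{1}{?}} + \left(\barint_{B(x,r)}|G|^{\frac{n}{n-1}}\right)^{\frac{1}{n}},
\]
where the scaling exponents are dictated by the $n$-Laplacian's homogeneity; I would fix them so that $E$ is invariant under the natural parabolic-type scaling $u\mapsto \lambda^{-1} u(x+rx')$, $f\mapsto \lambda^{n-1} r\, f$, $G\mapsto \lambda^{n-1} G$. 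The first step is to \emph{freeze the coefficient}: on a small ball replace $Q(x)$ by its average $Q_0\in SO(N)$, so that $v$ solving $-\di(|\nabla v|^{n-2}Q_0\nabla v)=0$ with the same boundary data is, after the rigid rotation $Q_0^{-1}$, an honest $n$-harmonic map, for which the classical $C^{1,\alpha}$-excess decay of Uhlenbeck/Tolksdorf applies: $E_v(x,\theta r)\le C\theta^{\alpha} E_v(x,r)$.

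The second step is to \textbf{compare $u$ with $v$}. Here the two sources of error are (i) the right-hand side $f+\di G$, handled by the standard comparison estimates for $n$-Laplace systems (testing with $u-v$, using the monotonicity of $\xi\mapsto|\xi|^{n-2}\xi$ and a Sobolev/Hölder estimate on $\int (f+\di G)\cdot(u-v)$), and (ii) the oscillation of $Q$, for which I would write
\[
-\di\!\left(|\nabla u|^{n-2}Q_0\nabla u\right) = f+\di G + \di\!\left((Q_0-Q)|\nabla u|^{n-2}\nabla u\right),
\]
so the coefficient error is literally a new divergence-datum $\di\widetilde G$ with $\|\widetilde G\|_{L^{n/(n-1)}(B_r)}\lesssim \|Q-Q_0\|_{L^\infty}\||\nabla u|^{n-1}\|_{L^{n/(n-1)}(B_r)}$. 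Since $Q\in W^{1,n}$ with small Dirichlet energy, Morrey/Poincaré gives $\osc_{B_r}Q\lesssim \|\nabla Q\|_{L^n(B_r)}\le\gamma$, so this term is \emph{small} — but only in a scale-invariant way after dividing by $\|\nabla u\|_{L^n(B_r)}$, which is exactly where Iwaniec' stability theorem \cite{IS94} enters: it upgrades "$Q$ is $L^\infty$-close to $Q_0$ plus $W^{1,n}$-small" into the quantitative statement that the solution of the perturbed $n$-Laplace equation is $L^n$-close to the frozen one, with a modulus that $\to 0$ as $\gamma\to 0$. Combining, one gets on a good scale
\[
E(x,\theta r) \le C\theta^{\alpha} E(x,r) + C\,\omega(\gamma)\, \|\nabla u\|_{L^{(n,\infty)}(B(0,1))} + (\text{data terms}),
\]
and choosing $\theta$ small to beat $C\theta^\alpha<\tfrac14$, then $\gamma=\gamma(\theta,\ve)$ small to beat $C\omega(\gamma)<\tfrac14$, gives a genuine excess-decay iteration.

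The third step is \textbf{summation over scales and the covering argument}. Iterating the decay from radius $\sim 1$ down to radius $r$ yields, for every $x$ in a fixed interior ball, a bound of the form $E(x,r)\le C r^{\alpha'}\big(E(x,\tfrac12)+\text{data}\big)+\tfrac12\|\nabla u\|_{L^{(n,\infty)}}$; in particular $\sup_{r}\, r^{-\alpha'}\mvint_{B(x,r)}|\nabla u|^{n-\ve}$ is controlled, which classically (level-set estimate à la Duzaar--Mingione, or interpolation between Morrey spaces) translates into a pointwise bound on a sharp-maximal-type function of $\nabla u$ and hence a $L^{(n,\infty)}$ estimate on $B(0,\tau)$. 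Tracking the constants through the scaling normalization produces exactly the asserted inequality with the $\tfrac12$-coefficient on the last term (it is precisely the contribution of the non-absorbable part of the $Q$-oscillation error, which cannot be made to vanish but can be made to have coefficient $<1$), plus the $\|f\|_{L^1}^{1/(n-1)}$ and $\|G\|_{L^{n/(n-1)}}^{1/(n-1)}$ terms coming from the data and the $\|\nabla u\|_{L^{n-\ve}}$ term coming from the sub-critical starting point of the iteration.

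The \textbf{main obstacle} I expect is the second step: making the comparison estimate genuinely \emph{scale-invariant} in the presence of the matrix $Q$. For the scalar $n$-Laplacian one has clean decay estimates, but here the frozen operator $\xi\mapsto|\xi|^{n-2}Q_0\xi$ is not a gradient unless $Q_0$ is applied as a rigid rotation on the target — which it is, so one must carefully use that $SO(N)$ acts on $\R^N$ and that $|Q_0\xi|=|\xi|$, to reduce to the genuine $n$-harmonic map system and legitimately invoke its regularity theory. Equally delicate is quantifying Iwaniec' stability in a form where the error modulus $\omega(\gamma)$ is \emph{uniform in the scale} $r$ and in the normalization $\|\nabla u\|_{L^n(B_r)}$; this is where one has to be careful that \cite{IS94} is applied to the normalized solution and that the $W^{1,n}$-smallness of $Q$ is conformally scale-invariant (it is, since $\|\nabla Q\|_{L^n}$ is), so no loss occurs under the zooming. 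Everything else — the Caccioppoli inequalities, the higher integrability giving the $L^{n-\ve}\to L^n$ improvement, the Poincaré/Morrey bound on $\osc Q$, and the covering — is standard, modulo bookkeeping.
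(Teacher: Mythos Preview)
Your overall architecture --- compare to an $n$-harmonic map, iterate an excess decay, pass through a sharp maximal function to reach $L^{(n,\infty)}$ --- is the same as the paper's. But the comparison step as you describe it has a genuine gap, and your account of where Iwaniec' stability enters is not correct.

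The gap is in your step (ii). You write that ``Morrey/Poincar\'e gives $\osc_{B_r}Q\lesssim \|\nabla Q\|_{L^n(B_r)}\le\gamma$''. This is false: we are in dimension $n$ with $\nabla Q\in L^n$, the \emph{critical} Sobolev case, and $W^{1,n}$ does not embed into $C^0$ or control $L^\infty$-oscillation. One only gets $[Q]_{BMO}\lesssim\|\nabla Q\|_{L^n}$. Consequently your frozen-coefficient error term $\widetilde G=(Q_0-Q)|\nabla u|^{n-2}\nabla u$ cannot be bounded in $L^{n/(n-1)}$ by $\gamma\,\|\nabla u\|_{L^n}^{n-1}$ as you claim, and the whole freezing scheme stalls. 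This is not a technicality: it is exactly the obstruction that makes the $Q$-rotated system nonstandard.

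The paper bypasses this by never freezing $Q$. Instead it compares $u$ directly with the solution $v$ of $\Delta_n v=0$ (no $Q$) with $v=u$ on $\partial B$, and --- this is the actual role of Iwaniec' stability --- it works at level $L^{n-\eps}$ from the outset. Concretely, from the monotonicity inequality
\[
\int |\nabla u-\nabla v|^{n-\eps}\ \lesssim\ \int\bigl(Q|\nabla u|^{n-2}\nabla u-Q|\nabla v|^{n-2}\nabla v\bigr):Q|\nabla(u-v)|^{-\eps}\nabla(u-v),
\]
one Hodge-decomposes the \emph{test function} $Q|\nabla(u-v)|^{-\eps}\nabla(u-v)=\nabla a+B$ on the ball, with $a\in W^{1,(n-\eps)/(1-\eps)}_0$. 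Iwaniec' stability theorem (the bound $\|\Rz^\perp(|\nabla w|^{-\eps}\nabla w)\|\lesssim\eps\|\nabla w\|^{1-\eps}$) together with the Coifman--Rochberg--Weiss commutator estimate applied to $[\Rz,Q]$ gives
\[
\|B\|_{L^{(n-\eps)/(1-\eps)}}\ \lesssim\ \bigl(\eps+[Q]_{BMO}\bigr)\,\|\nabla(u-v)\|_{L^{n-\eps}}^{\,1-\eps},
\]
so the $Q$-oscillation enters only through $[Q]_{BMO}\lesssim\|\nabla Q\|_{L^n}\le\gamma$, which \emph{is} available. This yields the comparison estimate (Lemma~\ref{lm:distance_harm}) at $L^{n-\eps}$ level with a $\sigma\|\nabla u\|_{L^{n-\eps}}$ error. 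The $L^{n-\eps}$ level is not cosmetic: in the subsequent maximal-function step one needs $M\bigl[|\nabla u|^{n-\eps}\bigr]$ to be bounded on $L^{(n/(n-\eps),\infty)}$, which requires $n/(n-\eps)>1$; at level $n$ one would only recover $\|\nabla u\|_{L^n}$ on the right-hand side, not $\|\nabla u\|_{L^{(n,\infty)}}$, and the absorbable $\tfrac12\|\nabla u\|_{L^{(n,\infty)}}$ term in the conclusion would be lost.
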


An adaptation of the argument in the proof of \Cref{th:regularityLn1} leads to the following corollary for the $n$-dimensional version of the prescribed mean curvature equation, shortly $H$-system. This has been considered by several authors also since the 1990s, e.g. \cite{DF91,DG00,MY96,MY96b,W99,Strzelecki03,K10,S13,SS17,FKLV18,MPS22} -- observe that \Cref{th:main1} is not directly applicable, because the the curl condition is likely not reasonable.
\begin{corollary}\label{co:Hsystem}
	Let $H: \R^{n+1} \to \R$ be bounded and globally Lipschitz. Assume $u\in W^{1,n}(B,\R^{n+1})$ satisfies
	\begin{align*}
		-\div(|\nabla u|^{n-2} \nabla u) &= H(u)\, \dr_1 u\times \dr_2 u\times \cdots \times \dr_n u \quad \text{in $B$}
	\end{align*}
	If $\g u \in L^{(n,\frac{n}{n-1})}(B,\R^n)$ then $u$ is continuous in $B$.
\end{corollary}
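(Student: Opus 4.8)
The plan is to reduce the $H$-system to the inhomogeneous system $-\div(|\nabla u|^{n-2}Q\nabla u)=f+\div G$ treated by \Cref{Lninfty_estimateintro} — more precisely its flexible version \Cref{Lninfty_estimate} — with the \emph{trivial} gauge $Q\equiv I$ (so that the required smallness $\|\nabla Q\|_{L^n}\le\gamma$ is automatic), and then to rerun the localization–iteration behind the proof of \Cref{th:regularityLn1}. The two points to verify are: (a) an algebraic rewriting of the cross-product right-hand side as $f+\div G$ with $G,f$ in good Lorentz spaces, quantitatively controlled by $\|\nabla u\|_{L^{(n,n/(n-1))}}$ on small balls; and (b) that this feeds into the iteration and yields continuity rather than a mere $L^{(n,\infty)}$-bound.

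For (a), fix a component $i$, put $\{k_1,\dots,k_n\}=\{1,\dots,n+1\}\setminus\{i\}$ and $u'=(u^{k_1},\dots,u^{k_n})$, so that $(\dr_1u\times\cdots\times\dr_nu)^i=\pm\det(\nabla u')$ is a null Lagrangian. The naive primitive coming from the Piola identity, $\det(\nabla u')=\div(u^{k_1}V)$ with $V:=(\mathrm{cof}\,\nabla u')^{k_1\cdot}$ divergence free, is not usable: since $u$ is only $W^{1,n}$, hence merely in an exponential class, $u^{k_1}V$ lies only in $L^{n/(n-1)-\eps}$, which is subcritical. Instead I would write the divergence-free factor as $V^\alpha=\dr_\beta A^{\alpha\beta}$ for an antisymmetric potential $A\approx|D|^{-1}V$, integrate by parts once more, and use that $\dr_\alpha\dr_\beta(u^{k_1}A^{\alpha\beta})=0$ distributionally (antisymmetry of $A$ against the symmetric Hessian) to obtain $\det(\nabla u')=\div\widetilde G$ with $\widetilde G^\alpha:=-\dr_\beta u^{k_1}\,A^{\alpha\beta}$, which carries no unbounded factor. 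Moving $H(u)$ through the divergence then gives, for each $i$,
\begin{align*}
	H(u)\,(\dr_1u\times\cdots\times\dr_nu)^i = \div\bigl(H(u)\widetilde G_i\bigr) - \nabla(H(u))\cdot\widetilde G_i =: \div G_i + f_i ,
\end{align*}
with $f_i\equiv 0$ when $H$ is constant.

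Here the Lorentz hypothesis does the bookkeeping. By Hölder's inequality for Lorentz spaces, $V\in L^{(\frac n{n-1},\frac n{(n-1)^2})}$; the Lorentz–Sobolev inequality (which preserves the second index) gives $A\in L^{(\frac n{n-2},\frac n{(n-1)^2})}$; one more Hölder yields $\widetilde G\in L^{(\frac n{n-1},\frac1{n-1})}\subset L^{(\frac n{n-1},1)}$, hence $G\in L^{(\frac n{n-1},1)}$ (as $H\in L^\infty$) and, using $|\nabla(H(u))|\lesssim|\nabla u|$ (as $H$ is Lipschitz), $f\in L^{(1,\frac n{(n-1)(n+1)})}\subset L^1$, with $\|G\|_{L^{n/(n-1)}(B_r)}\lesssim\|\nabla u\|_{L^{(n,n/(n-1))}(B_r)}^{n}$ and $\|f\|_{L^1(B_r)}\lesssim\|\nabla u\|_{L^{(n,n/(n-1))}(B_r)}^{n+1}$; both tend to $0$ as $r\to 0$ by absolute continuity of the Lorentz norm. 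For (b), on a ball $B_r(x_0)$ one rescales to $B(0,1)$ — all the norms above are invariant under the conformal scaling leaving the equation invariant — and applies \Cref{Lninfty_estimate} with $Q\equiv I$: the data terms are small by the previous sentence, the term $\Gamma\|\nabla u\|_{L^{n-\eps}}$ is small by absolute continuity, and $\tfrac12\|\nabla u\|_{L^{(n,\infty)}(B(0,1))}$ is absorbed in the standard dyadic iteration. Finiteness of the second Lorentz index $\tfrac n{n-1}<\infty$ makes the resulting series of error terms summable over dyadic scales, which — exactly as for \Cref{th:regularityLn1} — upgrades the decay of $\|\nabla u\|_{L^{(n,\infty)}(B_\rho(x_0))}$ into local membership $\nabla u\in L^{(n,1)}$ and hence into continuity of $u$ via $\dot W^{1,(n,1)}\hookrightarrow C^0$.

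The main obstacle is step (a): turning the null Lagrangian $\det(\nabla u')$ into a genuine divergence $\div G$ with $G$ in the \emph{right} Lorentz space. The natural primitive carries a logarithmically unbounded factor of $u$; passing to $\widetilde G=-\nabla u^{k_1}\cdot A$ with $A$ the Hodge potential of the cofactor removes it, but one then has to track second Lorentz indices through two Hölder inequalities and a Sobolev embedding, and the estimate for the lower-order term $f$ — which has first Lorentz index $1$, so that $L^1$-data theory alone would only give $\nabla u\in L^{(n,\infty)}$ — must exploit its favourable small second index. One must also check that the iteration behind \Cref{th:regularityLn1} is unaffected by the presence of the $\div G$-term and the trivial gauge, and that the degenerate endpoint $n=2$ — where $\nabla u\in L^{(2,2)}$ is vacuous and the Hodge potential of an $L^2$ divergence-free planar field is only $BMO$, so the above reduction breaks down — is instead handled by the classical two-dimensional $H$-surface regularity (equivalently, by the Rivière–Uhlenbeck $L^2$-antisymmetric-potential argument).
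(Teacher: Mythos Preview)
Your decomposition (a) is a valid and genuinely different alternative to the paper's. The paper also uses that the cofactor field $B(u)$ (with $|B(u)|\lesssim|\nabla u|^{n-1}$) is divergence-free, hence $\Rz\cdot B(\tilde u)=0$ after a cutoff, but instead of building a Hodge potential $A$ it works entirely with Riesz-transform commutators: writing
\[
H(\tilde u)\,\nabla\tilde u\;B(\tilde u)=-\,[\Rz,H(\tilde u)](\laps{1}\tilde u)\,B(\tilde u)\;+\;\Rz\bigl(H(\tilde u)\laps{1}\tilde u\bigr)\,B(\tilde u),
\]
the first piece is $f$ (estimated via \Cref{th:crwlorentz}), and the second is realised as $\div G$ by duality --- testing against $\varphi$, one inserts the commutator $[\Rz,\varphi]$ thanks to $\Rz\cdot B(\tilde u)=0$ and invokes \Cref{th:crwlorentz} again. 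Your construction is more hands-on and avoids the CRW machinery; the paper's route stays within the commutator framework already set up in \Cref{s:commie}.

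The gap is in (b), and it is twofold. First, the paper's bounds come out in the form
\[
\|f\|_{L^1}\lesssim\|\nabla u\|_{L^{(n,2)}}^2\,\|\nabla u\|_{L^{(n,\infty)}}^{n-1},\qquad
\|G\|_{L^{n/(n-1)}}\lesssim\|\nabla u\|_{L^{(n,\frac{n}{n-1})}}\,\|\nabla u\|_{L^{(n,\infty)}}^{n-1},
\]
so that the data terms, raised to the power $\tfrac{n-\eps}{n-1}$, are $(\text{small})\cdot\|\nabla u\|_{L^{(n,\infty)}}^{n-\eps}$; this is exactly what lets the iteration of \Cref{pr:iteration} close and produce \emph{geometric} decay. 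Your stated bounds $\|G\|\lesssim\|\nabla u\|_{L^{(n,n/(n-1))}}^{n}$ and $\|f\|\lesssim\|\nabla u\|_{L^{(n,n/(n-1))}}^{n+1}$ lack the factor $\|\nabla u\|_{L^{(n,\infty)}}^{n-1}$; fed into \Cref{Lninfty_estimate} they give only $\|\nabla u\|_{L^{(n,\infty)}(B_\rho)}\to 0$ with no rate, and your ``summability over dyadic scales'' claim does not follow (there is no subadditivity of $\|\cdot\|_{L^{(n,q)}}$ over disjoint annuli that would turn absolute continuity into a convergent series). Second, neither the paper's machinery nor yours ever yields $\nabla u\in L^{(n,1)}$: the paper concludes continuity from H\"older decay of $r^{-\eps}\|\nabla u\|_{L^{n-\eps}(B_r)}^{n-\eps}$ (combining \Cref{la:easyiwaniec} with \Cref{Lninfty_estimate} as in \Cref{pr:iteration}), not from a Sobolev embedding of $\dot W^{1,(n,1)}$. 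The fix for the first point is immediate within your framework: when running H\"older/Sobolev along $V\to A\to\widetilde G$, place $n-1$ of the $\nabla u$ factors in $L^{(n,\infty)}$ and only one in $L^{(n,n/(n-1))}$; this recovers precisely the paper's bounds and the iteration of \Cref{pr:iteration} goes through unchanged.
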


Before we get to the outline of this paper, let us discuss a few extensions, and limitations of our result. First of all, we assume $Q(x) \in SO(N)$ in \Cref{th:regularityLn1} for the sake of presentation and simplicity, because this is what we have from \Cref{th:main1}. It is elementary to adapt the result to $Q(x) \in GL(N)$, where $\|Q\|_{L^\infty} + \|Q^{-1} \|_{L^\infty} < \infty$. Additionally, we can replace the right-hand side in \eqref{eq:divQnpde} by $\Omega_{ij} \cdot |\nabla u|^{n-2} \nabla u^j  + f_i$ where $f_i \in L^p$ for $p>1$ and still have the same result. We leave the details to the reader.

\subsection*{Outline of the paper:} In \Cref{s:commie} we discuss preliminaries, such as norms and operators, Lorentz spaces, but also the commutator estimates we need. In \Cref{s:hodge} we also discuss a somewhat special case of a Hodge decomposition with estimates we couldn't readily find in the literature. In \Cref{s:gauge} we introduce Uhlenbeck's Coulomb gauge, i.e. the rotation $Q$ and show how after rotation the antisymmetric potential improves on the Lorentz space scale. This will show that \Cref{th:main1} is a consequence of \Cref{th:regularityLn1}. In \Cref{s:iwaniec} we begin by using Iwaniec' stability arguments to obtain $L^{n-\eps}$-estimates of rotated $n$-harmonic systems. While \Cref{la:easyiwaniec} is standard, \Cref{lm:distance_harm} is the estimate that will lead to estimates that we can use for a Kuusi--Mingione scheme as in \cite{KM18}. The latter we carry out in \Cref{s:lorentz}, where we prove a more flexible version of \Cref{Lninfty_estimateintro}, namely \Cref{Lninfty_estimate}. The latter we use in \Cref{s:iteration}  to obtain a decay estimate in the $L^{n-\eps}$ and $L^{(n,\infty)}$ norm from which we conclude H\"older continuity, i.e. we prove \Cref{th:regularityLn1}. In \Cref{s:applications} we apply our results to $n$-harmonic maps and establish \Cref{co:harmonicmap}, and in \Cref{s:proofHsystem} we prove the result for $H$-systems, \Cref{co:Hsystem}.

\subsection*{Notation:} Throughout the paper we will assume $n \geq 3$.
Given two numbers $A$ and $B$ and a parameter $p$, we denote $A\aleq_p B$ for an inequality of the form $A\leq cB$, where $c$ is a constant depending only on $n$, $N$ and $p$. We denote $A\aleq B$ when the constant $c$ in the inequality $A\leq cB$ depends only on $n$ and $N$.

\subsection*{Acknowledgement}
A substantial part of this work was carried out while D.M. and A.S. were visiting University of Bielefeld. We like to express our gratitude to the University for its hospitality. D.M.'s visit was partially funded through SFB 1283 and ANR BLADE-JC ANR-18-CE40-002. D.M. thanks Paul Laurain for initiating him to the subject and for his constant support and advice. A.S. is an Alexander-von-Humboldt Fellow. A.S. is funded by NSF Career DMS-2044898.

\section{Preliminaries and commutator estimates}\label{s:commie}
\subsection{Operators and norms}
We recall the definition of the Riesz transforms $\Rz f$ applied to $f \in L^p(\R^n)$, $p \in (1,\infty)$.
\[
\Rz f := \mathcal{F}^{-1} \left( \mathbf{i} \frac{\xi}{|\xi|} \mathcal{F} f(\xi) \right),
\]
where $\mathcal{F}$ is the Fourier transform. This can be equivalently written as
\[
\Rz f(x) = c P.V. \int_{\R^n} \frac{x-y}{|x-y|^{n+1}}\, f(y)\, dy.
\]
Observe that $\Rz f$ is a vector, $\Rz f = (\Rz_1 f,\ldots,\Rz_n f)$.

We will also use the fractional Laplacian
\[
\laps{s} f = \mathcal{F}^{-1} \Big( |\xi|^s \mathcal{F} f(\xi) \Big).
\]
It is useful to observe that Riesz transform and half-Laplacian combine to a derivative (and in this sense $\Rz_\alpha$ is a zero-order derivative in direction $\alpha$),
\[
\Rz_\alpha \laps{1} f = -c \partial_{\alpha} f.
\]
The BMO-norm for $b \in L^1_{loc}(\R^n)$ is given by
\[
[b]_{BMO} := \sup_{x \in \R^n, r > 0} \mvint_{B(x,r)} |b-(b)_{B(x,r)}|,
\]
where we denote the mean value
\[
(b)_{B(x,r)} := \frac{1}{|B(x,r)|} \int_{B(x,r)} b = \mvint_{B(x,r)} b.
\]

\subsection{Lorentz spaces and maximal functions}\label{s:lorentzspace}
Here we recall the definitions and relevant properties of Lorentz spaces, with applications to maximal functions. For further reading, see \cite{bennett1988,grafakos2014}.

Let $\Omega\subset \R^n$ be an open set. Given a function $f : \Omega \to \R$, we define its decreasing rearrangement $f^* : [0,\infty)\to [0,\infty)$ by
\begin{align*}
	\forall t>0, \ \ \ f^*(t) := \inf\left\{ \lambda\geq 0 : |\{ x\in \Omega : |f(x)|>\lambda\}| \leq s \right\}.
\end{align*}
We also consider the following transformation of $f^*$ :
\begin{align*}
	\forall t>0, \ \ \ f^{**}(s) := \frac{1}{s} \int_0^s f^*(t) dt.
\end{align*}
Given $p\in(0,\infty)$ and $q\in(0,+\infty]$, we define the Lorentz norm $L^{(p,q)}(\Omega)$ as
\begin{align*}
	\| f\|_{L^{(p,q)}(\Omega)} := \left\{ \begin{array}{l l}
		\left(  \int_0^\infty \left( f^{**}(s) s^\frac{1}{p} \right)^q \frac{ds}{s} \right)^\frac{1}{q} & \text{ if }q<\infty,\\
		\sup_{s>0} s^\frac{1}{p} f^{**}(s) & \text{ if }q=\infty.
	\end{array}
	\right.
\end{align*}
This quantity is actually a norm only when $p,q\geq 1$. We also define equivalent quantities which are more useful in practice:
\begin{align*}
	[f]_{L^{(p,q)}(\Omega)} :=\left\{ \begin{array}{l l}
		\left(  \int_0^\infty \left( f^*(s) s^\frac{1}{p} \right)^q \frac{ds}{s} \right)^\frac{1}{q} & \text{ if }q<\infty,\\
		\sup_{s>0} s^\frac{1}{p} f^*(s) & \text{ if }q=\infty.
	\end{array}
	\right.
\end{align*}
When $q=\infty$, we will use an alternative form
\begin{align*}
	[f]_{L^{(p,\infty)}(\Omega)} &= \sup_{\lambda>0} \lambda |\{ x\in \Omega : |f(x)| > \lambda \}|^\frac{1}{p}.
\end{align*}
For any $p\in(0,\infty)$ and $q\in(0,+\infty]$, there exists a universal constant $c(p,q)>1$ such that for any $f\in L^{(p,q)}(\Omega)$
\begin{align*}
	[f]_{L^{(p,q)}(\Omega)} &\leq \|f\|_{L^{(p,q)}(\Omega)} \leq c(p,q) [f]_{L^{(p,q)}(\Omega)}.
\end{align*}
Lorentz spaces are refinements of the Lebesgue spaces in the following sense. Given $1\leq q<p<r \leq +\infty$ and $|\Omega|<\infty$, it holds
\begin{align*}
	L^r(\Omega) = L^{(r,r)}(\Omega) \subset L^{(p,r)}(\Omega) \subset L^{(p,p)}(\Omega) = L^p(\Omega) \subset L^{(p,q)}(\Omega) \subset L^{(q,q)}(\Omega) = L^q(\Omega).
\end{align*}
The Lorentz norms have the same behaviour as the Lebesgue spaces in the following sense :
\begin{itemize}
	\item There is a Hölder inequality for Lorentz spaces. Given $p_1,p_2,q_1,q_2\in[1,\infty]$ such that $1=\frac{1}{p_1} + \frac{1}{p_2}$ and $1=\frac{1}{q_1} + \frac{1}{q_2}$, there exists $c=c(p_1,p_2,q_1,q_2)>0$ such that for any $f\in L^{(p_1,q_1)}(\Omega)$ and $g\in L^{(p_2,q_2)}(\Omega)$ :
	\begin{align*}
		\|fg\|_{L^1(\Omega)} &\leq c\|f\|_{L^{(p_1,q_1)}(\Omega)} \|g\|_{L^{(p_2,q_2)}(\Omega)}.
	\end{align*}
	\item Given $r>0$ and $p\in(0,\infty)$ and $q\in (0,\infty]$, it holds
	\begin{align*}
		\|f^r \|_{L^{(p,q)}(\Omega)} &= \|f\|_{L^{(pr,qr)}(\Omega)}^r.
	\end{align*}
\end{itemize}
Given a cube $Q_0 \subset \R^n$, we define the sharp maximal function : for any $x\in Q_0$,
\begin{align*}
	M_{Q_0}^\sharp f (x) := \sup\left\{ \mvint_{Q} |f-(f)_Q| : Q\subset Q_0, x\in Q,\ Q\text{ is a cube} \right\}.
\end{align*}
Thanks to \cite[Chapter 5, Theorem 7.3, p.377]{bennett1988}, we have a pointwise lower bound on the rearrangements $M^\sharp_{Q_0}$:
\begin{theorem}\label{th:Msharp_star}
	There exists a universal constant $c=c(n)>0$ such that for any cube $Q_0\subset \R^n$ and any $f\in L^1(Q_0)$, it holds
	\begin{align*}
		\forall t\in\left( 0, \frac{|Q_0|}{6} \right), \ \ \ f^{**}(t) - f^*(t) \leq c(M^\sharp_{Q_0} f)^*(t).
	\end{align*}
\end{theorem}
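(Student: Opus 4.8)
This is \cite[Ch.~5, Thm.~7.3]{bennett1988}; here is the idea, and how I would organize it.

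\emph{Reduction and a rearrangement identity.} Since for every cube $Q$ one has $\mvint_{Q}\mvint_{Q}\bigl||f(x)|-|f(y)|\bigr|\,dx\,dy\le\mvint_{Q}\mvint_{Q}|f(x)-f(y)|\,dx\,dy$ and $\mvint_{Q}|g-(g)_Q|\le\mvint_{Q}\mvint_{Q}|g(x)-g(y)|\,dx\,dy\le 2\mvint_{Q}|g-(g)_Q|$, we get the pointwise bound $M^\sharp_{Q_0}|f|\le 2\,M^\sharp_{Q_0}f$; as $|f|^*=f^*$ and $|f|^{**}=f^{**}$, it suffices to treat $f\ge 0$. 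Fix $t\in(0,|Q_0|/6)$, put $\lambda:=f^*(t)$ and $A:=\{x\in Q_0:f(x)>\lambda\}$, so that $|A|\le t\le|\{f\ge\lambda\}|$ and $f^*\equiv\lambda$ on $[|A|,t]$. A short layer-cake computation then gives the exact identity
\begin{equation*}
 t\bigl(f^{**}(t)-f^*(t)\bigr)=\int_0^t\bigl(f^*(s)-f^*(t)\bigr)\,ds=\int_A(f-\lambda),
\end{equation*}
so everything reduces to bounding $\int_A(f-\lambda)$ by $c\,t\,(M^\sharp_{Q_0}f)^*(t)$.

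\emph{A density Calderón--Zygmund decomposition.} Because $|A|\le t<|Q_0|/6<|Q_0|$, the dyadic stopping time on $Q_0$ relative to $A$ is nontrivial: I would take $\{Q_j\}$ to be the maximal dyadic subcubes $Q\subseteq Q_0$ with $|Q\cap A|>\tfrac12|Q|$. These are pairwise disjoint, cover $A$ up to a null set (Lebesgue density theorem), satisfy $|Q_j|<2|Q_j\cap A|$ — hence $\sum_j|Q_j|\le 2|A|\le 2t$ — and each has a dyadic parent $\widehat Q_j\subseteq Q_0$ (here the bound $t<|Q_0|/6$ is exactly what keeps the parents inside $Q_0$) with $|\widehat Q_j\cap A|\le\tfrac12|\widehat Q_j|$, i.e.\ at least half of $\widehat Q_j$ lies in $\{f\le\lambda\}$.

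\emph{Per-cube estimate and assembly.} The half-density of $\{f\le\lambda\}$ in $\widehat Q_j$ forces $\bigl((f)_{\widehat Q_j}-\lambda\bigr)_+\le 2\,\mvint_{\widehat Q_j}|f-(f)_{\widehat Q_j}|$ (integrate $(f)_{\widehat Q_j}-f$ over $\widehat Q_j\cap\{f\le\lambda\}$). Splitting $f-\lambda=\bigl(f-(f)_{\widehat Q_j}\bigr)+\bigl((f)_{\widehat Q_j}-\lambda\bigr)$ on $Q_j\cap A$, using that $\widehat Q_j$ is admissible in the supremum defining $M^\sharp_{Q_0}f(x)$ for every $x\in Q_j$, and $|\widehat Q_j|=2^n|Q_j|<2^{n+1}|Q_j\cap A|$, one obtains
\begin{equation*}
 \int_{Q_j\cap A}(f-\lambda)\le\bigl(|\widehat Q_j|+2|Q_j\cap A|\bigr)\mvint_{\widehat Q_j}|f-(f)_{\widehat Q_j}|\le c_n\,|Q_j\cap A|\inf_{Q_j}M^\sharp_{Q_0}f.
\end{equation*}
Summing over the disjoint family, writing $m_j:=\inf_{Q_j}M^\sharp_{Q_0}f$ and noting that $Q_j\subseteq\{M^\sharp_{Q_0}f>\mu\}$ whenever $m_j>\mu$, the layer-cake formula together with $\sum_j|Q_j\cap A|\le|A|\le t$ gives $\int_A(f-\lambda)\le c_n\sum_j|Q_j\cap A|\,m_j\le c_n\int_0^t(M^\sharp_{Q_0}f)^*(s)\,ds$.

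\emph{The main obstacle.} The last display already yields the weaker bound $f^{**}(t)-f^*(t)\le c_n(M^\sharp_{Q_0}f)^{**}(t)$, but not yet the sharper $(M^\sharp_{Q_0}f)^*(t)$ claimed: the crude step $\int_AM^\sharp_{Q_0}f\le\int_0^t(M^\sharp_{Q_0}f)^*$ is off by a logarithm precisely when $M^\sharp_{Q_0}f$ has a tall spike on a set much smaller than $t$. Upgrading it is the delicate part and requires packing the cubes $\{Q_j\}$ and arguing on the distribution function of $M^\sharp_{Q_0}f$ directly, exploiting that the set where $M^\sharp_{Q_0}f$ is large (covered by the parents $\widehat Q_j$) has measure comparable to $t$ and cannot concentrate where $M^\sharp_{Q_0}f$ attains its largest values — this is where the constant $6$ does its work, and it is the technical core of the Bennett--Sharpley argument.
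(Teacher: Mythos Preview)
The paper does not prove this statement at all: it is quoted verbatim as \cite[Chapter~5, Theorem~7.3, p.~377]{bennett1988} with no argument supplied. Your proposal cites the same source and, beyond that, gives an honest and essentially correct outline of the Bennett--Sharpley proof through the Calder\'on--Zygmund stopping-time step, arriving at the weaker inequality $f^{**}(t)-f^*(t)\le c_n\,(M^\sharp_{Q_0}f)^{**}(t)$.

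You are right that the passage from $(M^\sharp_{Q_0}f)^{**}(t)$ to $(M^\sharp_{Q_0}f)^*(t)$ is the genuinely delicate point, and your sketch does not close it --- you defer back to Bennett--Sharpley for that step, which is exactly what the paper does for the whole theorem. So there is no discrepancy with the paper's ``proof''. It is worth noting, however, that the only place the paper invokes this theorem is in \Cref{goingtoLpq}, where the inequality is immediately integrated against $\frac{ds}{s}$ and then fed into Hardy's inequality to control an $L^{(p,q)}$-norm; for that purpose the weaker $(M^\sharp_{Q_0}f)^{**}$ bound you do establish would already suffice, since $\|h^{**}\|_{L^{(p,q)}}\aeq\|h\|_{L^{(p,q)}}$ for $p>1$. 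In other words, your partial argument is enough for every application made in the paper.
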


\subsection{Commutator estimates}

It is well known that the Riesz transforms are bounded on $L^p(\R^n)$, namely
\begin{theorem}[Calder\'on-Zygmund theorem]\label{th:CZ}
	For $p \in (1,\infty)$ there exists a constant $C$ such that the following holds for all $f \in L^p(\R^n)$
	\[
	\|\Rz f\|_{L^p(\R^n)} \leq C \|f\|_{L^p(\R^n)}.
	\]
\end{theorem}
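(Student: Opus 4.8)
The plan is to follow the classical Calder\'on-Zygmund program. First I would settle the case $p=2$ via Plancherel's theorem: since $\Rz f = \mathcal{F}^{-1}\bigl(\mathbf{i}\tfrac{\xi}{|\xi|}\,\mathcal{F}f\bigr)$ and the Fourier multiplier $m(\xi) = \mathbf{i}\tfrac{\xi}{|\xi|}$ satisfies $|m(\xi)| \le 1$ for every $\xi \neq 0$, we obtain $\|\Rz f\|_{L^2(\R^n)} \le \|f\|_{L^2(\R^n)}$, i.e. the claimed estimate with $C=1$ when $p=2$.

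Next I would record the kernel estimates for the principal-value convolution kernel $K(x) = c\,\tfrac{x}{|x|^{n+1}}$ of $\Rz$: the size bound $|K(x)| \aleq |x|^{-n}$, the cancellation $\int_{r<|x|<R} K(x)\,dx = 0$ for all $0<r<R$ (by oddness of $K$, which is also what makes the principal value meaningful), and the derivative bound $|\nabla K(x)| \aleq |x|^{-n-1}$. The latter yields, via the mean value theorem, H\"ormander's regularity condition $\int_{|x| \ge 2|y|} |K(x-y)-K(x)|\,dx \aleq 1$ uniformly in $y \neq 0$.

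The heart of the argument is the weak type $(1,1)$ bound. Given $f \in L^1(\R^n)\cap L^2(\R^n)$ and a height $\lambda>0$, I would use the Calder\'on-Zygmund decomposition of $f$ at level $\lambda$, writing $f = g+b$ with $\|g\|_{L^\infty} \aleq \lambda$, $\|g\|_{L^2}^2 \aleq \lambda\|f\|_{L^1}$, and $b = \sum_j b_j$ where each $b_j$ is supported in a dyadic cube $Q_j$, has mean zero, and $\sum_j |Q_j| \aleq \lambda^{-1}\|f\|_{L^1}$. The good part is handled by Chebyshev and the $L^2$ bound, $|\{|\Rz g|>\lambda/2\}| \aleq \lambda^{-2}\|\Rz g\|_{L^2}^2 \aleq \lambda^{-1}\|f\|_{L^1}$. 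For the bad part one discards $\bigcup_j 2Q_j$, of measure $\aleq \lambda^{-1}\|f\|_{L^1}$, and on the complement uses the mean-zero property of each $b_j$ together with H\"ormander's condition to get $\int_{(2Q_j)^c}|\Rz b_j|\,dx \aleq \|b_j\|_{L^1}$, which sums to $\aleq \|f\|_{L^1}$, and then applies Chebyshev once more. Adding the two contributions gives $|\{|\Rz f|>\lambda\}| \aleq \lambda^{-1}\|f\|_{L^1}$. Interpolating (Marcinkiewicz) between this weak $(1,1)$ bound and the strong $(2,2)$ bound yields the strong $(p,p)$ estimate for every $p \in (1,2]$, and then a standard density argument extends it to all of $L^p$.

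Finally I would upgrade to $p \in [2,\infty)$ by duality: since $m(\xi)$ is purely imaginary, the $L^2$-adjoint of $\Rz$ is $-\Rz$, hence for such $p$, conjugate exponent $p'\in(1,2]$, and Schwartz $f,\varphi$ we have $|\langle \Rz f,\varphi\rangle| = |\langle f,\Rz\varphi\rangle| \le \|f\|_{L^p}\|\Rz\varphi\|_{L^{p'}} \aleq_p \|f\|_{L^p}\|\varphi\|_{L^{p'}}$ by the case already proven, so $\|\Rz f\|_{L^p}\aleq_p\|f\|_{L^p}$, and one concludes by density. I expect the Calder\'on-Zygmund decomposition and the weak $(1,1)$ estimate it produces to be the only genuine obstacle; the $p=2$ step, the interpolation and the duality are all soft, and the kernel estimates are elementary. (An alternative to the decomposition is Calder\'on-Zygmund's original method of rotations, reducing the bound to the one-dimensional Hilbert transform; I would nonetheless prefer the decomposition as the cleaner route.)
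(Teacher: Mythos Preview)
Your outline is correct and follows the standard Calder\'on--Zygmund route (Plancherel for $p=2$, H\"ormander kernel condition, weak $(1,1)$ via the Calder\'on--Zygmund decomposition, Marcinkiewicz interpolation, then duality). The paper, however, does not prove this statement at all: it is recorded in the preliminaries as ``well known'' and simply quoted, so there is no paper proof to compare against---your proposal is more than what the paper supplies.
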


When $f$ is a product of functions, cancellations can appear, which is the harmonic analysis reason for many compensated compactness type results. The fundamental estimate is the following

\begin{theorem}[Coifman-Rochberg-Weiss commutator theorem, \cite{CRW76}]\label{th:crw}
	For $p \in (1,\infty)$ there exists a constant $C$ such that the following holds.
	
	For any $b \in L^\infty(\R^n)$ and $f \in L^p(\R^n)$ we have
	\[
	[\Rz,b](f) := \Rz (bf) - b\Rz (f) \in L^p(\R^n)
	\]
	with the estimate
	\[
	\|[\Rz,b](f)\|_{L^p(\R^n)} \leq C\, [b]_{BMO}\, \|f\|_{L^p(\R^n)}.
	\]
\end{theorem}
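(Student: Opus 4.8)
The plan is to prove the estimate by the complex-analytic argument of \cite{CRW76}. Since $[b]_{BMO} \le 2\|b\|_{L^\infty} < \infty$ and $bf,\ \Rz f \in L^p$ by \Cref{th:CZ}, the membership $[\Rz,b](f)\in L^p$ is automatic, so the content is the quantitative bound; we may assume $[b]_{BMO}>0$, since otherwise $b$ is a.e.\ constant and $[\Rz,b](f)\equiv 0$. For $z\in\mathbb{C}$ set
\[
F(z) := e^{zb}\,\Rz\!\left(e^{-zb}f\right)\in L^p(\R^n).
\]
Because $b\in L^\infty$, the maps $z\mapsto e^{-zb}f$ and $z\mapsto e^{zb}$ are entire with values in $L^p$ and $L^\infty$ respectively (their power series converge locally uniformly in $z$), and $\Rz$ is a bounded operator on $L^p$; hence $F$ is an entire $L^p$-valued function, componentwise in the vector index of $\Rz$. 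A direct differentiation gives $F'(0) = b\,\Rz f - \Rz(bf) = -[\Rz,b](f)$, so the Cauchy integral formula for Banach-space-valued holomorphic functions yields, for every $\varepsilon>0$,
\[
[\Rz,b](f) = -\frac{1}{2\pi i}\oint_{|z|=\varepsilon}\frac{F(z)}{z^2}\,dz,
\qquad\text{hence}\qquad
\|[\Rz,b](f)\|_{L^p}\le \frac{1}{\varepsilon}\sup_{|z|=\varepsilon}\|F(z)\|_{L^p}.
\]

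Next I would bound $\|F(z)\|_{L^p}$ for $|z|=\varepsilon$. Writing $w := e^{p\,\mathrm{Re}(z)\,b}$,
\[
\|F(z)\|_{L^p}^p = \int_{\R^n} \big|\Rz\!\left(e^{-zb}f\right)\big|^p\, w, \qquad \big|e^{-zb}f\big|^p\, w = |f|^p \ \text{ pointwise},
\]
so this is exactly the weighted $L^p(w)$-estimate for $\Rz$ applied to $e^{-zb}f$. By classical Muckenhoupt $A_p$-theory the Riesz transform is bounded on $L^p(w)$ with operator norm controlled by a (locally bounded) function of the characteristic $[w]_{A_p}$, so $\|F(z)\|_{L^p}\le C([w]_{A_p})\,\|f\|_{L^p}$. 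The key point is that the John--Nirenberg inequality gives $e^{\mu b}\in A_p$ with $[e^{\mu b}]_{A_p}$ bounded by a constant depending only on $n,p$ whenever $|\mu|\,[b]_{BMO}\le c_{n,p}$. Applying this with $\mu = p\,\mathrm{Re}(z)$, $|\mathrm{Re}(z)|\le\varepsilon$, the choice
\[
\varepsilon := \frac{c_{n,p}}{p\,[b]_{BMO}}
\]
forces $[w]_{A_p}\le C(n,p)$ uniformly for $|z|=\varepsilon$, hence $\sup_{|z|=\varepsilon}\|F(z)\|_{L^p}\le C(n,p)\,\|f\|_{L^p}$. Combining with the Cauchy estimate above gives $\|[\Rz,b](f)\|_{L^p}\le \varepsilon^{-1}C(n,p)\,\|f\|_{L^p} = C(n,p)\,[b]_{BMO}\,\|f\|_{L^p}$, which is the claim.

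The step I expect to require the most care is the quantitative input: producing $A_p$-weights with controlled characteristic from the $BMO$-seminorm via John--Nirenberg, combined with the $A_p$-weighted bound for $\Rz$ stated in a form where the dependence of the constant on $[w]_{A_p}$ is tracked; these are classical but must be invoked in their quantitative versions. An alternative route avoiding weights is the Fefferman--Stein sharp maximal function method: one establishes the pointwise bound $M^\sharp_\delta\!\left([\Rz,b]f\right)(x)\lesssim [b]_{BMO}\big(M_r f(x) + M_r(\Rz f)(x)\big)$ for some $0<\delta<1<r<p$ --- splitting $f$ into its restriction to a fixed cube and its complement, using John--Nirenberg to control the $L^{r'}$-oscillation of $b$ on the cube, the weak-$(1,1)$ bound for $\Rz$ on the near term, and the kernel smoothness of $\Rz$ on the far term --- and then applies $\|g\|_{L^p}\lesssim\|M^\sharp_\delta g\|_{L^p}$ together with the $L^p$-boundedness of $M_r$ and of $\Rz$. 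The delicate point in that approach is the bookkeeping of the subtracted constant in $M^\sharp_\delta$ and of the far-field contribution.
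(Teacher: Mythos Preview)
Your proposal is correct; the complex-interpolation argument you outline is precisely the original one from \cite{CRW76}, and the alternative sharp-maximal route you sketch is also standard and valid. The paper itself does not supply a proof of this theorem---it is stated as a known result with the citation to \cite{CRW76}---so there is nothing to compare against beyond noting that you have reproduced the classical argument.
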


We need a special version in Lorentz spaces, that can be proven e.g. with the methods in \cite[Theorem 6.1.]{LS20}.
\begin{theorem}[Coifman-Rochberg-Weiss commutator theorem, \cite{CRW76}]\label{th:crwlorentz}
	For $p \in (1,\infty)$, $q_1,q_2,q_3 \in [1,\infty]$ such that
	\[
	\frac{1}{q_1} = \frac{1}{q_2} + \frac{1}{q_3}
	\] there exists a constant $C$ such that the following holds.
	
	For any $b \in L^\infty(\R^n)$ and $f \in L^p(\R^n)$ we have the estimate
	\[
	\|[\Rz,b](f)\|_{L^{(p,q_1)}(\R^n)} \leq C\, \|\nabla b\|_{L^{(n,q_2)}(\R^n)}\, \|f\|_{L^{(p,q_3)}(\R^n)}.
	\]
\end{theorem}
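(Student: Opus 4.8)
The plan is to deduce the Lorentz-space commutator estimate from the known $L^p$-version, \Cref{th:crw}, by a real-interpolation argument. First I would recall that for fixed $b \in L^\infty$ the operator $T_b f := [\Rz, b](f)$ is linear in $f$, so that the $L^p \to L^p$ bound of \Cref{th:crw} together with the Marcinkiewicz interpolation theorem immediately gives that $T_b$ maps $L^{(p,q)} \to L^{(p,q)}$ for every $q \in [1,\infty]$, with operator norm $\aleq [b]_{BMO}$. That already handles the case $q_2 = \infty$, $q_3 = q_1 = q$, since $\|\nabla b\|_{L^{(n,\infty)}} \aleq [b]_{BMO}$ is false in general — so the genuinely useful content is the off-diagonal statement where we must trade integrability of $\nabla b$ against integrability of $f$. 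For this the key observation is that we need a \emph{bilinear} estimate in $(b,f)$, not just linearity in $f$.

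Next I would set up a bilinear interpolation. View $(b,f) \mapsto [\Rz,b](f)$ as a bilinear operator. From \Cref{th:crw} (or rather its refinement with $\nabla b$ in place of $[b]_{BMO}$, which is the Coifman--Lions--Meyer--Semmes-type estimate $\|[\Rz,b]f\|_{L^{q_1}} \aleq \|\nabla b\|_{L^n}\|f\|_{L^{q_1}}$ for $\frac1{q_1} = \frac1n + \frac1{q_3}$, $q_3 = q_1$ here needs $n=\infty$, so more precisely $\|[\Rz,b]f\|_{L^r} \aleq \|\nabla b\|_{L^{p_2}}\|f\|_{L^{p_3}}$ whenever $\frac1r = \frac1{p_2}+\frac1{p_3}$, $p_2 > n$ being the relevant range via Sobolev $\dot W^{1,n}\hookrightarrow BMO$ combined with Sobolev embedding of $\dot W^{1,p_2}$), one has a scale of estimates on Lebesgue endpoints. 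Then the bilinear real interpolation theorem of Lions--Peetre (in the form stated, e.g., in Bergh--Löfström or as used in \cite{LS20}) applied in the $b$-variable and the $f$-variable simultaneously produces precisely
\[
\|[\Rz,b]f\|_{L^{(p,q_1)}} \aleq \|\nabla b\|_{L^{(n,q_2)}} \|f\|_{L^{(p,q_3)}}, \qquad \tfrac1{q_1} = \tfrac1{q_2}+\tfrac1{q_3},
\]
the relation on the fine parameters $q_i$ being exactly the one that comes out of bilinear interpolation (Hölder in the secondary index). As the statement says, this is carried out in \cite[Theorem 6.1]{LS20}; I would either cite it directly or reproduce the interpolation skeleton.

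The main obstacle is getting the endpoint structure right so that the bilinear interpolation actually closes with the stated parameter relation $\frac1{q_1} = \frac1{q_2}+\frac1{q_3}$ and no loss. Concretely, one must choose two Lebesgue-scale estimates for $\|\nabla b\|_{L^{p_2^{(0)}}}, \|\nabla b\|_{L^{p_2^{(1)}}}$ straddling $n$ (equivalently, BMO on one side via $\dot W^{1,n}\hookrightarrow BMO$ and a sub-critical Sobolev estimate on the other) and matching $L^{p_3}$-scales for $f$, verify that along the interpolation path the primary index stays fixed at $(n,\cdot)$ for $\nabla b$ and at $(p,\cdot)$ for $f$, and then track how the secondary indices combine — this is where the additivity $\frac1{q_1}=\frac1{q_2}+\frac1{q_3}$ appears, as the natural Hölder relation in the $K$-functional estimates. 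Once the endpoints and the interpolation functor are correctly matched this is routine; the only real care needed is the bookkeeping of indices and the verification that the off-diagonal Coifman--Rochberg--Weiss estimate on Lebesgue spaces (with $\nabla b$ rather than $[b]_{BMO}$) is available in the required range, which follows from \Cref{th:crw} composed with Sobolev embedding.
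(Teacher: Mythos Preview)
The paper itself does not prove this statement; it simply records that the Lorentz refinement ``can be proven e.g.\ with the methods in \cite[Theorem~6.1]{LS20}'' and then uses it as a black box. So there is no proof in the paper to compare against, and your willingness to cite \cite{LS20} directly is, strictly speaking, already at parity with the paper.

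That said, your sketch of how the argument should go has a genuine soft spot. Linear real interpolation in $f$ (with $b$ frozen) does give $\|[\Rz,b]f\|_{L^{(p,q)}}\aleq [b]_{BMO}\|f\|_{L^{(p,q)}}$ for every $q$, and combined with the Poincar\'e/H\"older bound $[b]_{BMO}\aleq\|\nabla b\|_{L^{(n,\infty)}}$ this covers the endpoint $q_2=\infty$. The difficulty is the \emph{bilinear} step you propose to reach general $q_2$: the classical Lions--Peetre bilinear real interpolation theorem does \emph{not} automatically produce the H\"older relation $\tfrac{1}{q_1}=\tfrac{1}{q_2}+\tfrac{1}{q_3}$ on the secondary indices. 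Its standard conclusion is of the type $T:(A_0,A_1)_{\theta,r}\times(B_0,B_1)_{\theta,\infty}\to(C_0,C_1)_{\theta,r}$, i.e.\ one factor is forced to $q=\infty$. Getting the full off-diagonal Lorentz statement requires either additional structure of the operator or a different argument; your last paragraph gestures at this but does not resolve it. A second wrinkle is that to interpolate and land with primary index exactly $n$ on $\nabla b$ you must vary the primary index through $n$ (since $(L^p,L^p)_{\theta,q}=L^p$, not $L^{(p,q)}$), and on the subcritical side $p_2<n$ there is no bound $[b]_{BMO}\aleq\|\nabla b\|_{L^{p_2}}$ to feed into \Cref{th:crw}; your parenthetical about ``$p_2>n$ via Sobolev'' only supplies one side of the needed pair.

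A more robust route---and closer in spirit to what \cite{LS20} actually does---is to bypass interpolation and argue directly from the kernel. Writing
\[
[\Rz,b]f(x)=c\int_{\R^n}\frac{x-y}{|x-y|^{n+1}}\bigl(b(y)-b(x)\bigr)f(y)\,dy
= c\int_0^1\!\!\int_{\R^n}\frac{(x-y)\otimes(x-y)}{|x-y|^{n+1}}\,\nabla b\bigl(y+t(x-y)\bigr)\,f(y)\,dy\,dt,
\]
the inner kernel has size $|x-y|^{1-n}$, i.e.\ it is a Riesz potential of order $1$. H\"older's inequality in Lorentz spaces gives $\nabla b\cdot f\in L^{(r,q_1)}$ with $\tfrac{1}{r}=\tfrac{1}{n}+\tfrac{1}{p}$ and $\tfrac{1}{q_1}=\tfrac{1}{q_2}+\tfrac{1}{q_3}$, and then $I_1:L^{(r,q_1)}\to L^{(p,q_1)}$ (O'Neil) yields exactly the claimed bound, with the H\"older relation on the $q$'s appearing for the honest reason that it comes from a product, not from interpolation bookkeeping.
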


For vectorfields $G \in L^p(\R^n,\R^n)$ we will use the following notation, the zero-order divergence $\Rz \cdot$,
\[
\|\Rz \cdot G\|_{L^p(\R^n)} := \|\sum_{\alpha=1}^n\Rz_\alpha G_\alpha\|_{L^p(\R^n)},
\]
and the zero-order curl, $\Rz^\perp$,
\[
\left\| \Rz^\perp G \right\|_{L^p(\R^n)} := \max_{\alpha, \beta \in \{1,\ldots,n\}} \|\Rz_\alpha G_\beta-\Rz_\beta G_\alpha\|_{L^p(\R^n)},
\]
We have the following stability result by Iwaniec--Sbordone \cite{IS94}. For a proof, inspired by \cite{RW83}, we refer to \cite[Theorem 13.2.1]{IM01}. Observe that $\Rz^\perp \nabla u = 0$.

\begin{theorem}[Iwaniec stability result \cite{IS94}]\label{th:iwaniec}
	For any $1 < p_1 < p_2 < \infty$ there exists $\eps_0 > 0$ and a constant $C=C(p_1,p_2)$ such that whenever $\eps \in (-\eps_0,\eps_0)$ and $p \in (p_1,p_2)$ then
	\[
	\left\|\Rz^\perp \brac{|\nabla u|^\eps \nabla u} \right\|_{L^{\frac{p}{1+\eps}}(\R^n)} \leq C\, |\eps| \|\nabla u\|_{L^p(\R^n)}^{1+\eps}
	\]
	whenever the right-hand side is finite.
\end{theorem}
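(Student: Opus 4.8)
The plan is to use Stein's method of analytic families of operators, built around the observation that the nonlinear map $v\mapsto|v|^\eps v$ degenerates to the identity at $\eps=0$, so that the quantity to be bounded \emph{vanishes} at $\eps=0$; a Schwarz-lemma argument then produces the gain $|\eps|$. As a first reduction, the scaling $|\lambda\nabla u|^\eps(\lambda\nabla u)=\lambda^{1+\eps}|\nabla u|^\eps\nabla u$ for $\lambda>0$ lets us normalise $\|\nabla u\|_{L^p(\R^n)}=1$, so that the claim becomes $\|\Rz^\perp(|\nabla u|^\eps\nabla u)\|_{L^{p/(1+\eps)}}\le C(p_1,p_2)\,|\eps|$. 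Approximating $u$ in $\dot W^{1,p}$ by smooth maps and using that $v\mapsto|v|^\eps v$ is continuous from $L^p$ into $L^{p/(1+\eps)}$ (H\"older inequality) together with \Cref{th:CZ}, we may assume $v:=\nabla u$ is smooth with compact support; in particular $\Rz^\perp v=0$ since $v$ is a gradient (Riesz transforms commute).

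Fix $p\in(p_1,p_2)$ and choose $\eps_0=\eps_0(p_1,p_2)\in(0,p_1-1)$ so small that $q(\sigma):=p/(1+\sigma)$ and its conjugate $q(\sigma)'$ stay in a fixed compact subinterval of $(1,\infty)$ for all $|\sigma|\le\eps_0$ and all $p\in(p_1,p_2)$. On the disk $D:=\{z\in\bbbc:|z|<\eps_0\}$ put $\Phi_z:=|v|^z v$; for a.e.\ $x$ the map $z\mapsto\Phi_z(x)$ is holomorphic, and $\|\Phi_z\|_{L^{q(\mathrm{Re}\,z)}(\R^n)}=\|v\|_{L^p}^{1+\mathrm{Re}\,z}=1$. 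Now fix $\eps\in(-\eps_0,\eps_0)$, fix one scalar entry, so that we must bound $\|\Rz_\alpha\Phi_\eps^\beta-\Rz_\beta\Phi_\eps^\alpha\|_{L^{q(\eps)}}$, and pick by duality a simple function $g$ with $\|g\|_{L^{q(\eps)'}}\le1$. Let $g_z:=\sum_j|c_j|^{(p-1-z)/(p-1-\eps)}\tfrac{c_j}{|c_j|}\bbbone_{E_j}$ be the associated Stein family; since the exponent $(p-1-z)/(p-1-\eps)$ has real part $(p-1-\mathrm{Re}\,z)/(p-1-\eps)$, a direct computation gives $g_\eps=g$ and $\|g_z\|_{L^{q(\mathrm{Re}\,z)'}}\le1$ throughout $D$. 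Set $h(z):=\int_{\R^n}\big(\Rz_\alpha\Phi_z^\beta-\Rz_\beta\Phi_z^\alpha\big)\,g_z$. By Morera's theorem and Fubini, $h$ is holomorphic on $D$; by \Cref{th:CZ} (whose constant is uniform over the compact range of exponents) and H\"older, $|h(z)|\le C(p_1,p_2)\|\Phi_z\|_{L^{q(\mathrm{Re}\,z)}}\|g_z\|_{L^{q(\mathrm{Re}\,z)'}}\le C(p_1,p_2)$ on $D$; and $h(0)=\int(\Rz_\alpha v^\beta-\Rz_\beta v^\alpha)\,g_0=0$.

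Since $h$ is holomorphic on $D$, bounded by $C(p_1,p_2)$, and vanishes at the centre, the Schwarz lemma yields $|h(\eps)|\le C(p_1,p_2)\,\eps_0^{-1}|\eps|$. Taking the supremum over admissible $g$ and over the finitely many scalar entries gives $\|\Rz^\perp(|\nabla u|^\eps\nabla u)\|_{L^{p/(1+\eps)}}\le C(p_1,p_2)\,|\eps|$, and undoing the normalisation proves the theorem. I expect the main work to be in the middle step: making the Stein family precise and checking holomorphy of $h$ together with boundary bounds having constants that depend only on $p_1,p_2$ — a setting complicated by the fact that \emph{both} $\Phi_z$ and the Lebesgue exponent $q(\mathrm{Re}\,z)$ in which it is measured move with $z$, which is slightly outside the usual framework. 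An alternative elementary route, via $|v|^\eps v=v+\eps\int_0^1|v|^{t\eps}(\log|v|)\,v\,dt$, runs into the unboundedness of the logarithm and would only give the estimate under the stronger hypothesis $\nabla u\in L^{p-\delta}\cap L^{p+\delta}$; this is why the complex method (the route also taken in \cite{IM01}) seems necessary for the stated sharp form.
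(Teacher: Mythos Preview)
The paper does not give its own proof of this theorem: it is quoted from \cite{IS94}, and the paper simply refers the reader to \cite[Theorem 13.2.1]{IM01} for a proof ``inspired by \cite{RW83}''. Your sketch is a faithful reconstruction of exactly that Rochberg--Weiss/Iwaniec--Martin argument: the holomorphic family $z\mapsto|v|^zv$, the Stein test-family $g_z$ with moving exponent, the observation $h(0)=0$ because $\Rz^\perp\nabla u=0$, and the Schwarz lemma to extract the factor $|\eps|$. So your approach coincides with the one the paper cites, and the outline is correct.

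One small technical point worth tightening: to apply Morera--Fubini to $h$ you need an integrable majorant uniform on compact subsets of the disk $D$. The cleanest way is to move the Riesz transform onto the simple function first, writing $h(z)=-\int\Phi_z^\beta\,\Rz_\alpha g_z+\int\Phi_z^\alpha\,\Rz_\beta g_z$; then $\Rz_\alpha g_z=\sum_j|c_j|^{(p-1-z)/(p-1-\eps)}\tfrac{c_j}{|c_j|}\Rz_\alpha\bbbone_{E_j}$ is a finite sum of fixed $L^{q'}$-functions times entire coefficients, and the required domination follows from H\"older with exponents staying in your fixed compact interval. Your remark that the alternative real-variable route via $|v|^\eps v=v+\eps\int_0^1|v|^{t\eps}\log|v|\,v\,dt$ loses the sharp hypothesis is also correct and matches the reason the literature uses the complex method.
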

Let us remark that it is unclear to what extend the statement of \Cref{th:iwaniec} holds for Lorentz spaces (with uniform constant $C$). The operator in question is nonlinear, and not always Lipschitz, so interpolation obtains unsatisfactory dependencies of $C$ on $\eps$; See also \cite[Problem 3.2.]{SS17}.

\subsection{Hodge decomposition}\label{s:hodge}

It is not difficult to show, \cite[(10.68),(10.69),(10.70)]{IM01}, that for any $f \in L^p(\R^n;\R^n)$, $p \in (1,\infty)$, $q \in [1,\infty]$, we have
\begin{equation}\label{eq:Rnhodge}
	\|f\|_{L^{(p,q)}(\R^n)} \aeq \|\Rz \cdot f\|_{L^{(p,q)}(\R^n)} + \|\Rz^\perp f\|_{L^{(p,q)}(\R^n)}
\end{equation}
This is essentially a consequence of Hodge decomposition in $\R^n$.

The main goal of this section is the following Hodge decomposition on a ball without harmonic term.
%
\begin{proposition}[Hodge decomposition]\label{pr:hodge}
	Let $p \geq 2$. Assume that $F \in C_c^\infty(B,\R^n)$ then there exist $a \in W^{1,p}_0(B(0,1))$ and $B \in W^{1,p}(B(0,1),\R^n)$ such that
	\begin{equation}\label{eq:mainhodge}
		F = \nabla a + B\quad \text{in $B(0,1)$}
	\end{equation}
	and we have the estimates
	\begin{equation}\label{eq:mainhodgeesta}
		\|a\|_{W^{1,p}(B(0,1))} \aleq \|\Rz \cdot F\|_{L^p(\R^n)},
	\end{equation}
	and
	\begin{equation}\label{eq:mainhodgeestB}
		\|B\|_{L^{p}(B(0,1))} \aleq \|\Rz^\perp F\|_{L^p(\R^n)}
	\end{equation}
\end{proposition}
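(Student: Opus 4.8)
The plan is to reduce the claim on the ball to the known global Hodge decomposition in $\mathbb{R}^n$, then correct the boundary behaviour so that the gradient part lies in $W^{1,p}_0$. First I would use \eqref{eq:Rnhodge}: since $F \in C_c^\infty(B,\mathbb{R}^n)$, extend by zero to all of $\mathbb{R}^n$ and write the global decomposition $F = \nabla \tilde a + \tilde B$ with $\tilde a := -(-\Delta)^{-1} \operatorname{div} F$ (so $\tilde a = \mathcal{I}_1 (\mathcal{R}\cdot F)$ up to constants) and $\tilde B := F - \nabla \tilde a$, which is divergence-free. The Calderón–Zygmund estimates give $\|\nabla \tilde a\|_{L^p(\mathbb{R}^n)} \aleq \|\mathcal{R}\cdot F\|_{L^p(\mathbb{R}^n)}$, and since $\tilde B = \mathcal{R}^\perp$-type combination of $F$ (more precisely, in Fourier $\widehat{\tilde B}(\xi) = (I - \xi\otimes\xi/|\xi|^2)\widehat F(\xi)$, which annihilates the curl-free part), one has $\|\tilde B\|_{L^p(\mathbb{R}^n)} \aleq \|\mathcal{R}^\perp F\|_{L^p(\mathbb{R}^n)}$; this last point is exactly the content of \eqref{eq:Rnhodge} applied to the curl-free/divergence-free splitting. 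Restricting to $B(0,1)$ these estimates persist.

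The only defect is that $\tilde a$ need not vanish on $\partial B(0,1)$. To fix this, let $h$ solve the Dirichlet problem $\Delta h = 0$ in $B(0,1)$, $h = \tilde a$ on $\partial B(0,1)$, and set $a := \tilde a - h \in W^{1,p}_0(B(0,1))$, $B := \tilde B + \nabla h$. Then \eqref{eq:mainhodge} holds in $B(0,1)$, and since $h$ is harmonic, $\operatorname{curl}(\nabla h) = 0$ while I still need $\|\nabla h\|_{L^p(B(0,1))} \aleq \|\mathcal{R}^\perp F\|_{L^p(\mathbb{R}^n)}$. Here is where the hypothesis $p \ge 2$ should enter: $\nabla h$ is the harmonic extension of the tangential gradient of $\tilde a|_{\partial B}$, and controlling its $L^p$ norm in terms of curl-type quantities of $F$ rather than of $\operatorname{div} F$ requires the right trace/extension estimate. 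The cleanest route is to observe that $B = \tilde B + \nabla h$ is the Hodge projection \emph{on the ball}: $B$ is characterized by $\operatorname{div} B = \operatorname{div} F$? No — rather $a$ is characterized as the solution of $\Delta a = \operatorname{div} F$ in $B$ with $a \in W^{1,p}_0$, equivalently $\Delta a = \operatorname{div}(\nabla \tilde a)$, and then $B = F - \nabla a$. So I would instead define $a$ directly as the $W^{1,p}_0(B)$-solution of $\Delta a = \operatorname{div} F$ (weakly: $\int_B \nabla a \cdot \nabla \varphi = -\int_B F\cdot\nabla\varphi$ for all $\varphi \in W^{1,p'}_0(B)$), which exists and is unique by $L^p$-theory for the Dirichlet Laplacian on the ball.

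With $a$ defined this way, \eqref{eq:mainhodgeesta} is the standard Calderón–Zygmund estimate for the Dirichlet problem, $\|a\|_{W^{1,p}(B)} \aleq \|F\|_{L^p(B)}$, improved to the stated form by noting $\operatorname{div} F$ only depends on the curl-free part of $F$, hence one may replace $\|F\|_{L^p}$ by $\|\nabla\tilde a\|_{L^p} \aleq \|\mathcal R\cdot F\|_{L^p(\mathbb R^n)}$ using the global decomposition above. For \eqref{eq:mainhodgeestB}: $B = F - \nabla a = (\tilde B + \nabla\tilde a) - \nabla a = \tilde B + \nabla(\tilde a - a)$, and $w := \tilde a - a$ satisfies $\Delta w = 0$ in $B$ (since both $\Delta \tilde a$ and $\Delta a$ equal $\operatorname{div} F$ there), with $w = \tilde a$ on $\partial B$. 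Thus I must bound $\|\nabla w\|_{L^p(B)}$ by $\|\mathcal R^\perp F\|_{L^p(\mathbb R^n)}$. The key is the global interior estimate for harmonic functions combined with a duality/trace argument: for $p \ge 2$, $\|\nabla w\|_{L^p(B(0,1))} \aleq \|\nabla w\|_{L^p(B(0,3/4))} \cdot$(scaling) is false in general, so instead I pair $\nabla w$ against a test vector field $\Phi \in C_c^\infty(B(0,1),\mathbb R^n)$ with $\|\Phi\|_{L^{p'}}\le 1$, Hodge-decompose $\Phi = \nabla\psi + \Theta$ on $\mathbb R^n$ with $\operatorname{div}\Theta = 0$, use $\int_B \nabla w\cdot\Theta$ can be integrated by parts onto $\partial B$ where $w=\tilde a$, and $\int_B\nabla w\cdot\nabla\psi = 0$ because $w$ is harmonic and $\psi \in W^{1,p'}_0$ (approximately). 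Careful bookkeeping of the boundary term, using that $\tilde a$'s boundary trace is controlled by the divergence-free part $\tilde B$ of $F$ via the identity $\nabla\tilde a = F - \tilde B$, yields the bound in terms of $\|\tilde B\|_{L^p} \aleq \|\mathcal R^\perp F\|_{L^p(\mathbb R^n)}$.

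\textbf{Main obstacle.} The delicate point is precisely estimate \eqref{eq:mainhodgeestB}: getting the $L^p$ norm of the divergence-free part $B$ on the ball controlled by the \emph{curl} of $F$ (and not by all of $F$), which forces one to handle the harmonic correction $w$ carefully. The naive Dirichlet estimate only gives $\|B\|_{L^p(B)} \aleq \|F\|_{L^p(B)}$; extracting the sharp dependence on $\mathcal R^\perp F$ is what makes this decomposition non-standard and is, I expect, the reason the authors could not locate it in the literature. The hypothesis $p\ge 2$ is presumably used so that $p' \le 2$ and the dual test functions $\psi$ can be taken in $W^{1,p'}_0(B)$ with good estimates, and/or so that the boundary trace of $W^{1,p}$ harmonic functions is handled by Fatou-type or maximal-function arguments that are cleaner for $p \ge 2$.
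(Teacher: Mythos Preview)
Your construction of $a$ as the $W^{1,p}_0(B)$ solution of $\Delta a=\div F$, together with the observation that $\div F=\div(\nabla\tilde a)$ so one may replace $F$ by $\nabla\tilde a$ in the Calder\'on--Zygmund bound and obtain \eqref{eq:mainhodgeesta}, is correct and essentially matches the paper's \Cref{la:hodge:aguy} (there the same estimate is obtained more directly by writing $\int_B \div F\,\varphi=\int_{\R^n}\Rz\cdot F\,\laps{1}\varphi$).

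There is, however, a genuine gap in your argument for \eqref{eq:mainhodgeestB}. Your duality sketch decomposes the test field $\Phi$ on $\R^n$ as $\nabla\psi+\Theta$ and then invokes ``$\int_B\nabla w\cdot\nabla\psi=0$ because $\psi\in W^{1,p'}_0$ (approximately)''. But a global Hodge decomposition does \emph{not} produce $\psi$ vanishing on $\partial B$, so this term does not drop out. If instead you decompose on the ball with $\psi\in W^{1,p'}_0(B)$, then $\int_B\nabla w\cdot\Theta$ becomes the boundary integral $-\int_{\partial B}\tilde a\,\partial_\nu\psi$, and there is no evident way to bound this by $\|\Rz^\perp F\|_{L^p}$: the boundary trace of $\tilde a$ is controlled by $\|\nabla\tilde a\|_{L^p}\aleq\|\Rz\cdot F\|_{L^p}$, the wrong quantity. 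Your proposed fix via ``$\nabla\tilde a=F-\tilde B$'' does not help, because the boundary term involves $\tilde a$ itself, not its gradient; the identity $\nabla\tilde a=-\tilde B$ in $\R^n\setminus\supp F$ controls $\nabla\tilde a$ in the exterior but gives no direct handle on the harmonic extension $w$ into the interior.

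The paper avoids this by working with differential forms. Rather than writing $B=\tilde B+\nabla w$ with a scalar harmonic correction, it constructs $B=d^\star b$ where $b$ is a $2$-form solving $dd^\star b=dF$, $db=0$ in $B(0,1)$, with the tangential boundary condition $i^\ast(d^\star b)=0$ on $\partial B(0,1)$ (\Cref{la:hodge:bguy}, obtained by minimizing a quadratic energy over closed $2$-forms in $W^{1,2}$ --- this is where $p\ge 2$ enters, not for trace reasons as you speculated). The estimate $\|d^\star b\|_{L^p}\aleq\|\Rz^\perp F\|_{L^p}$ is then obtained by duality, but the test $1$-form $\eta$ is Hodge-decomposed \emph{on the ball} via \cite{ISS99} as $\eta=d^\star\psi+\phi$ with $d\phi=0$ and $\nu\wedge\phi=0$ on $\partial B$; these boundary conditions are precisely matched to those of $b$ so that $\int_B\langle d^\star b,\phi\rangle=0$ after integration by parts, and the remaining term $\int_B\langle d^\star b,d^\star\psi\rangle=-\int_B\langle F,d^\star\psi\rangle$ is bounded by $\|\Rz^\perp F\|_{L^p}\|\psi\|_{W^{1,p'}}$ using the antisymmetry $\psi_{\alpha\beta}=-\psi_{\beta\alpha}$. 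The careful pairing of boundary conditions between the solution $b$ and the decomposition of the test form is exactly what your harmonic-correction approach is missing.
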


Decompositions such as the one in \eqref{eq:mainhodge} are well-known, cf. \cite{ISS99,CDK12}. What is, however, crucial for us are the estimates \eqref{eq:mainhodgeesta}, \eqref{eq:mainhodgeestB} -- and we are not aware of them being readily available in the literature.

In order to prove \Cref{pr:hodge} we will use the notion of differential forms, for which we refer the reader to \cite{ISS99,CDK12}. With
\[
i: \partial B(0,1) \to \overline{B(0,1)}
\]
we denote the inclusion map. We write $\Ep^k B(0,1)$ the set of functions defined on $B(0,1)$ with values into $k$-forms on $\R^n$. Given $\omega \in \Ep^k B(0,1)$, we can write $\omega$ in coordinates
\[
\omega = \sum_{I \in \mathcal{I}} \omega_{I} dx_I,
\]
where $\mathcal{I}= \{ (i_1,\ldots,i_k), \quad 1\leq i_1 < i_2 < \ldots < i_k \leq n\}$ are ordered tuples and $\omega_I$ are functions.
We say that $\omega$ belongs to $W^{1,p}(\Ep^{k} B(0,1))$ if $\omega_I \in W^{1,p}(B(0,1))$ for all $I \in \mathcal{I}$.

By $d$ we denote the differential and by $d^\hdg= \hdg d \hdg$ we denote the co-differential, where $\hdg$ is the Hodge star operator.

\begin{lemma}\label{la:hodge:aguy}
	Assume $p \in [2,\infty)$. Assume $F \in C_c^\infty(\Ep^1 B(0,1))$. Then there exists
	$a \in W^{1,p}_0(B(0,1))$ such that
	\begin{equation}\label{eq:aw1phodge}
		\begin{cases}
			d^\hdg da = d^\hdg F \quad  &\text{in $B(0,1)$,}\\
			a = 0 \quad &\text{on $\partial B(0,1)$,}
		\end{cases}
	\end{equation}
	and we have
	\begin{equation}\label{eq:aw1phodgeest}
		\|a\|_{W^{1,p}(B(0,1))} \aleq \|\Rz \cdot F\|_{L^p(\R^n)}.
	\end{equation}
\end{lemma}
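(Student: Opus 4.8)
The plan is to solve the scalar Dirichlet problem $\Delta a = \operatorname{div} F$ in $B(0,1)$ with $a = 0$ on $\partial B(0,1)$ — which is exactly \eqref{eq:aw1phodge} written in the language of functions, since for a $0$-form $a$ we have $d^\hdg da = -\Delta a$ and $d^\hdg F = -\operatorname{div} F$ for the $1$-form $F$. First I would note that $\operatorname{div} F = \Rz \cdot(\laps{1} F)$ up to a constant, so the inhomogeneity is the divergence of an $L^p$-vectorfield. I would then split $a = a_1 + a_2$, where $a_1$ is the Newtonian potential solving $\Delta a_1 = \operatorname{div} F$ on all of $\R^n$, and $a_2$ is the harmonic corrector solving $\Delta a_2 = 0$ in $B(0,1)$, $a_2 = -a_1$ on $\partial B(0,1)$.

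For $a_1$: since $F \in C_c^\infty$, I can write $\nabla a_1 = \Rz(\Rz\cdot F) + (\text{something annihilated})$; more precisely $\partial_\alpha a_1 = -\Rz_\alpha \laps{-1}\operatorname{div} F = \Rz_\alpha \Rz_\beta F_\beta$ (sum over $\beta$), so by the Calderón--Zygmund theorem \Cref{th:CZ} applied twice, $\|\nabla a_1\|_{L^p(\R^n)} \aleq \|\Rz\cdot F\|_{L^p(\R^n)}$. This is the key point where the right-hand side of \eqref{eq:aw1phodgeest} appears: the full gradient of $a_1$ is controlled by just the divergence part $\Rz\cdot F$, because $\nabla a_1$ is itself a gradient (its curl vanishes) and hence is recovered from its divergence by a Riesz-transform identity.

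For $a_2$: it is harmonic in $B(0,1)$ with boundary data $-a_1|_{\partial B(0,1)}$. By standard elliptic estimates for harmonic functions with prescribed boundary data (or equivalently interior $L^p$ gradient estimates combined with a trace/extension estimate), $\|\nabla a_2\|_{L^p(B(0,1))} \aleq \|a_1 - (a_1)_{B(0,1)}\|_{W^{1,p}(B(0,1))} \aleq \|\nabla a_1\|_{L^p(B(0,1))}$, using the Poincaré inequality; one can arrange the additive constant freely since $a = a_1 + a_2$ only needs $a|_{\partial B} = 0$, and I would subtract the mean of $a_1$ from $a_1$ and add it to $a_2$ before applying Poincaré. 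Then $\|a\|_{W^{1,p}(B(0,1))} \aleq \|\nabla a_1\|_{L^p(B(0,1))} \aleq \|\Rz\cdot F\|_{L^p(\R^n)}$, using Poincaré once more on $B(0,1)$ for the full $W^{1,p}$-norm of $a$ since $a \in W^{1,p}_0$.

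The main obstacle is not really an obstacle but a bookkeeping issue: ensuring the harmonic corrector $a_2$ is estimated purely in terms of $\nabla a_1$ without picking up a dependence on $\|a_1\|_{L^p}$ (which is not scale-invariant and not controlled by $\|\Rz\cdot F\|_{L^p}$ alone). This is handled by normalizing constants as above. A secondary point is justifying the density/approximation step so that the a priori estimate for smooth $F$ suffices — but since the statement only claims the estimate for $F \in C_c^\infty$, no closure argument is needed here. One should also double-check the sign and constant conventions in $d^\hdg da = -\Delta a$ and $d^\hdg F = -\div F$ for $1$-forms in $\R^n$, but these are routine.
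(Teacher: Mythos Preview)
Your proof is correct and reaches the same conclusion, but by a different route than the paper. The paper also reduces \eqref{eq:aw1phodge} to $\Delta a = \div F$ with zero Dirichlet data, but for the estimate \eqref{eq:aw1phodgeest} it argues by duality: for $\varphi \in C_c^\infty(B(0,1))$ one writes
\[
\int_{B(0,1)} \div F\, \varphi = \int_{\R^n} \Rz\cdot F\, \laps{1}\varphi \aleq \|\Rz\cdot F\|_{L^p(\R^n)}\, \|\varphi\|_{W^{1,p'}(B(0,1))},
\]
so that $\|\Delta a\|_{(W^{1,p'}_0)^\ast} \aleq \|\Rz\cdot F\|_{L^p}$, and then invokes Calder\'on--Zygmund theory for the Dirichlet problem directly. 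Your argument instead splits $a = a_1 + a_2$ into a whole-space Newtonian potential plus a harmonic corrector, and uses the explicit identity $\partial_\alpha a_1 = -\Rz_\alpha(\Rz\cdot F)$ to see transparently why only the zero-order divergence of $F$ enters. The paper's approach is shorter and avoids the corrector bookkeeping (mean subtraction, trace/extension estimates for the harmonic part); your approach is more constructive and makes the mechanism $\nabla a_1 = \Rz(\Rz\cdot F)$ visible at the level of symbols, at the cost of handling $a_2$ separately. Both are standard and equally valid here.
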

\begin{proof}
	We simply solve
	\[
	\begin{cases}
		\lap a = \div F \quad  &\text{in $B(0,1)$}\\
		a = 0 \quad &\text{on $\partial B(0,1)$}.
	\end{cases}
	\]
	Using that $a$ is a zero-form, and thus $d^\hdg a = 0$ we observe that this is equivalent to \eqref{eq:aw1phodge}.
	
	For the estimate \eqref{eq:aw1phodgeest} we let $\varphi \in C_c^\infty(B(0,1))$. By the support of $F$
	\[
	\int_{B(0,1)} \div F \varphi = \int_{\R^n} \div F\, \varphi = \int_{\R^n} \Rz \cdot F\, \laps{1} \varphi \aleq \|\Rz \cdot F\|_{L^p(\R^n)}\, \|\laps{1} \varphi\|_{L^{p'}(\R^n)}
	\]
	Since
	\[
	\|\laps{1} \varphi\|_{L^{p'}(\R^n)} \aleq \|\nabla \varphi\|_{L^{p'}(\R^n)} \aleq \|\varphi\|_{W^{1,p'}(B(0,1))},
	\]
	we conclude that
	\[
	\|\lap a\|_{\brac{W^{1,p'}_0(B(0,1))}^\ast} \aleq \|\Rz \cdot F\|_{L^p(\R^n)}.
	\]
	Standard Calder\'on-Zygmund estimates imply \eqref{eq:aw1phodgeest}.
\end{proof}

\begin{lemma}\label{la:hodge:bguy}
	Let $p \in [2,\infty)$. Assume $F \in C_c^\infty(\Ep^1 B(0,1))$. Then there exists $b \in L^p(\Ep^2 B(0,1)) \cap C^1(\Ep^2 B(0,1))$, such that
	\begin{equation}\label{eq:hdgbguy}
		\begin{cases}
			d d^\hdg b= dF&\text{in $B(0,1)$},\\
			db = 0 \quad &\text{in $B(0,1)$},\\
			i^\ast(d^\hdg b) = 0 \quad &\text{on $\partial B(0,1)$},
		\end{cases}
	\end{equation}
	such that
	\begin{equation}\label{eq:hdgbguyest}
		\|b\|_{W^{1,p}(B(0,1))} \aleq \|\Rz^\perp F\|_{L^p(\R^n)}.
	\end{equation}
\end{lemma}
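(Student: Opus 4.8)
I would reduce \Cref{la:hodge:bguy} to a single linear elliptic boundary value problem and then extract the crucial gain of one derivative -- the difference between the cruder bound by $\|dF\|_{L^p}$ and the claimed bound by $\|\Rz^\perp F\|_{L^p}$ -- by a duality argument, exactly in the spirit of the proof of \Cref{la:hodge:aguy}.

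\textbf{Construction of $b$.}
First, solve the relative--boundary--condition Hodge--Laplace problem for a $1$-form $\phi$ on $B(0,1)$:
\[
(dd^\hdg + d^\hdg d)\phi = F \ \text{ in } B(0,1), \qquad i^\ast\phi = 0, \quad i^\ast(d^\hdg\phi) = 0 \ \text{ on } \partial B(0,1).
\]
In flat coordinates the Hodge Laplacian acts on each component as $-\lap$, so this is a standard elliptic boundary value problem for the $n$ components of $\phi$ (with one coupled boundary condition); since $B(0,1)$ is a ball and $n\geq 3$, the space of Dirichlet harmonic $1$-fields vanishes (it is isomorphic to the relative cohomology $H^1(B(0,1),\partial B(0,1);\R)=0$), so the problem is uniquely solvable, and as $F\in C_c^\infty$ the solution $\phi$ is smooth up to $\partial B(0,1)$. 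Put $b:=d\phi$; then $db=0$. Using $d^\hdg d\phi = F - d(d^\hdg\phi)$ we get $dd^\hdg b = d(d^\hdg d\phi) = dF$, and, since $F$ has compact support (so $i^\ast F=0$) and $i^\ast(d^\hdg\phi)=0$,
\[
i^\ast(d^\hdg b) = i^\ast(d^\hdg d\phi) = i^\ast F - d\big(i^\ast(d^\hdg\phi)\big) = 0.
\]
Thus $b$ satisfies \eqref{eq:hdgbguy}, and $b=d\phi\in C^1(\Ep^2 \overline{B(0,1)}) \subset L^p(\Ep^2 B(0,1)) \cap C^1$.

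\textbf{The estimate.}
By construction $b$ also solves the $2$-form relative problem $(dd^\hdg + d^\hdg d)b = dF$ in $B(0,1)$, $i^\ast b = 0$, $i^\ast(d^\hdg b) = 0$. The key point is that its right--hand side is small in the negative--order norm: for any test $2$-form $\varphi\in C_c^\infty(\Ep^2 B(0,1))$, using that $F$ and $\varphi$ are supported in $B(0,1)$ we may compute on $\R^n$, and the identity $\Rz_\alpha\laps{1} = -c\,\partial_\alpha$ of \Cref{s:commie} gives $(dF)_{\alpha\beta} = \partial_\alpha F_\beta - \partial_\beta F_\alpha = -c^{-1}\laps{1}(\Rz_\alpha F_\beta - \Rz_\beta F_\alpha)$; moving the self--adjoint $\laps{1}$ onto $\varphi$,
\[
\int_{B(0,1)} \scal{dF}{\varphi} = -c^{-1}\sum_{\alpha<\beta}\int_{\R^n} (\Rz_\alpha F_\beta - \Rz_\beta F_\alpha)\,\laps{1}\varphi_{\alpha\beta}.
\]
By Hölder's inequality, the bound $\|\laps{1}\varphi\|_{L^{p'}(\R^n)}\aleq\|\nabla\varphi\|_{L^{p'}(\R^n)}$ (valid since $\laps{1} = -\sum_\gamma\Rz_\gamma\partial_\gamma$ and \Cref{th:CZ}), and $\|\nabla\varphi\|_{L^{p'}(\R^n)}\leq\|\varphi\|_{W^{1,p'}(B(0,1))}$,
\[
\left|\int_{B(0,1)} \scal{dF}{\varphi}\right| \aleq \|\Rz^\perp F\|_{L^p(\R^n)}\,\|\varphi\|_{W^{1,p'}(B(0,1))},
\]
i.e.\ $\|dF\|_{\brac{W^{1,p'}_0(\Ep^2 B(0,1))}^\ast}\aleq\|\Rz^\perp F\|_{L^p(\R^n)}$, with $p'$ the conjugate exponent of $p$. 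Since there are no Dirichlet harmonic $2$-fields on the ball when $n\geq 3$ (their space being $\cong H^2(B(0,1),\partial B(0,1);\R)=0$), the $L^p$-theory for the Hodge Laplacian with relative boundary conditions -- the form--valued analogue of the scalar Calderón--Zygmund estimate $\|a\|_{W^{1,p}_0}\aleq\|\lap a\|_{(W^{1,p'}_0)^\ast}$ used in \Cref{la:hodge:aguy} -- then yields
\[
\|b\|_{W^{1,p}(B(0,1))} \aleq \|dF\|_{\brac{W^{1,p'}_0(\Ep^2 B(0,1))}^\ast} \aleq \|\Rz^\perp F\|_{L^p(\R^n)},
\]
which is \eqref{eq:hdgbguyest}.

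\textbf{Main obstacle.}
The technical heart is the negative--order ($W^{-1,p}$-data) $L^p$-estimate for the Hodge Laplacian with relative boundary conditions on the ball in the absence of harmonic fields, i.e.\ the form--valued analogue of the scalar inequality invoked in \Cref{la:hodge:aguy}. It follows from this boundary value problem being elliptic in the sense of Agmon--Douglis--Nirenberg, so that the full $L^p$ theory -- including duality for negative--order right--hand sides -- applies; the real labour is in quoting a clean version from the literature (e.g.\ \cite{ISS99,CDK12}) and in the routine but delicate bookkeeping of form--calculus sign conventions and of the identification $\mathcal{H}^k_D(B^n)\cong H^k(B^n,\partial B^n;\R) = 0$ for $k<n$. (Alternatively, one may define $b$ directly as the solution of the $2$-form relative problem with right--hand side $dF$ and verify $db=0$ a posteriori: $db$ is then a Dirichlet harmonic $3$-field, which vanishes identically when $n\geq 4$, while for $n=3$ one additionally invokes Stokes' theorem, $\int_B db = \int_{\partial B} i^\ast b = 0$, to rule out the one--dimensional space of top--degree Dirichlet harmonic fields.)
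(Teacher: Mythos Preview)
Your proposal is essentially correct and arrives at the same statement, but the route differs from the paper's in two respects.

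\textbf{Construction.} You build $b=d\phi$ from the relative Hodge--Laplace problem for a $1$-form $\phi$. The paper instead minimizes the energy $\mathcal{E}(b)=\int \frac12|d^\hdg b|^2+\langle F,d^\hdg b\rangle$ over the closed subspace $\{b\in W^{1,2}(\Ep^2 B):\,db=0\}$, using the Gaffney-type inequality \cite[Theorem~6.3]{ISS99} to control the minimizing sequence and to normalize away harmonic fields. Both constructions lead to a solution of \eqref{eq:hdgbguy}; yours is slicker if one is willing to cite the relative Hodge--Laplace solvability as a black box, the paper's is more self-contained.

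\textbf{The $W^{1,p}$ estimate.} Both arguments identify the same gain-of-derivative step, namely
\[
\Big|\int_{B}\langle dF,\varphi\rangle\Big|\;\aleq\;\|\Rz^\perp F\|_{L^p(\R^n)}\,\|\varphi\|_{W^{1,p'}}.
\]
You then invoke ``$L^p$-theory for the Hodge Laplacian with relative boundary conditions at the negative order'' to pass from this to $\|b\|_{W^{1,p}}\aleq\|\Rz^\perp F\|_{L^p}$. This is the step the paper actually carries out by hand: it bounds $\|d^\hdg b\|_{L^p}$ by duality, Hodge-decomposes the test $1$-form $\eta=d^\hdg\psi+\phi$ with $\phi$ satisfying the tangential conditions that force $\int\langle d^\hdg b,\phi\rangle=0$, reduces to $\int\langle d^\hdg b,d^\hdg\psi\rangle$ (which equals $-\int\langle F,d^\hdg\psi\rangle$ by the Euler--Lagrange equation), and then applies the Gaffney inequality once more to upgrade $\|d^\hdg b\|_{L^p}$ to $\|b\|_{W^{1,p}}$. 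So the ``main obstacle'' you flag is precisely what the paper proves rather than cites.

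One small repair: for the black-box route you need the dual bound against test forms $\varphi\in W^{1,p'}_T(\Ep^2 B):=\{\varphi:\ i^\ast\varphi=0\}$, not merely $\varphi\in W^{1,p'}_0$, since the weak formulation of the relative problem lives on $W^{1,p'}_T$ and $(W^{1,p'}_0)^\ast$ is the larger dual. The same computation works once you replace $\varphi$ by a bounded $W^{1,p'}$-extension $E\varphi$ to $\R^n$ (exactly as the paper does), using that $dF$ is compactly supported in $B$; but as written, bounding only the $(W^{1,p'}_0)^\ast$-norm is formally too weak.
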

\begin{proof}
	Let
	\[
	X = \left \{\eta \in W^{1,2}\brac{\Ep^2 B(0,1)},\, d\eta = 0\ \text{in $B(0,1)$}\right \},
	\]
	which is clearly a closed subset of $W^{1,2}\brac{\Ep^2 B(0,1)}$.
	Consider the energy
	\[
	\mathcal{E}(b) := \int_{B(0,1)} \frac{1}{2}\langle d^\hdg b, d^\hdg b\rangle + \langle F, d^\hdg b\rangle.
	\]
	Let $(b_k)_{k \in \N} \subset X$ be a minimizing sequence in $X$ for $\mathcal{E}$.
	
	Clearly we may assume $\mathcal{E}(b_k) \leq \mathcal{E}(0) = 0$, and then we have
	\[
	\sup_{k} \|d^\hdg b_k\|_{L^2(B(0,1))}^2 \aleq \|F\|_{L^2(B(0,1))}^2.
	\]
	By \cite[Theorem 6.3.]{ISS99}, there exists $h_k \in W^{1,2}(\Ep^2 B(0,1))$, $dh_k =0$, $d^\hdg h_k =0$, \cite[(3.18)]{ISS99}, such that
	\[
	\|b_k - h_k\|_{W^{1,2}(\Ep^2 B(0,1))} \aleq \underbrace{\|db_k\|_{L^2(B(0,1))}}_{=0} + \|d^\hdg b_k\|_{L^2(B(0,1))},
	\]
	i.e.
	\[
	\|b_k - h_k\|_{W^{1,2}(\Ep^2 B(0,1))} \aleq \|d^\hdg (b_k-h_k)\|_{L^2(B(0,1))} \aleq \|F\|_{L^2(B(0,1))}.
	\]
	Observe that $b_k-h_k$ still belongs to $X$, so we might as well assume $h_k = 0$.
	
	Then $b_k$ is uniformly bounded in $W^{1,2}(\Ep^2 B(0,1))$, and up to taking a subsequence converges weakly in $W^{1,2}(\Ep^2 B(0,1))$ to some $b \in X$. Clearly we have weak lower semicontinuity of $\mathcal{E}(b)$ w.r.t. weak $W^{1,2}$-convergence, so we have found a minimizer $b \in X$ of $\mathcal{E}$.
	
	Now for the equation: For $\phi \in C^\infty(\Ep^2 \overline{B})$ with $d\phi = 0$ then $b+t\phi \in X$, and $\frac{d}{dt} \Big|_{t=0}\mathcal{E}(b+t\phi) = 0$ implies
	\[
	\int_{B} \langle d^\hdg b, d^\hdg \phi\rangle + \langle F, d^\hdg \phi\rangle = 0.
	\]
	For a general $\varphi \in C^\infty(\Ep^2 \overline{B(0,1)})$, cf. \cite[(5.8)]{ISS99}, we find $\phi \in C^\infty(\Ep^2 \overline{B(0,1)})$, $d\phi = 0$ in $B(0,1)$, and $\psi \in C^\infty(\Ep^3 \overline{B(0,1)})$ such that
	\[
	\varphi = \phi + d^\hdg \psi
	\]
	Since $d^\hdg \circ d^\hdg = 0$ we conclude
	\begin{equation}\label{eq:pdedhdg}
		\int_{B(0,1)} \langle d^\hdg b, d^\hdg \varphi\rangle + \langle F, d^\hdg \varphi\rangle = 0 \quad \text{for all $\varphi\in C^\infty(\Ep^2 \overline{B(0,1)})$}.
	\end{equation}
	An integration by parts, \cite[Theorem 3.28]{CDK12}, implies
	\[
	\begin{split}
		&\int_{B(0,1)} \langle d^\hdg b+F, d^\hdg \varphi\rangle+\int_{B(0,1)} \langle dd^\hdg b+dF, \varphi\rangle\\
		=&\int_{\partial B(0,1)} \langle \nu \wedge \brac{d^\hdg b+F}, \varphi\rangle,
	\end{split}
	\]
	where $\nu\in \Ep^1 \partial B(0,1)$ denotes the outwards facing unit norm (as a form).
	This gives the interior equation \eqref{eq:hdgbguy} for $b$. For the boundary data we observe from the above equality that
	\[
	\nu \wedge (d^\hdg b+F)  = 0 \quad \text{on $\partial B(0,1)$}.
	\]
	By \cite[Corollary 3.21]{CDK12}, this is equivalent to the condition
	\[
	i^* (d^\hdg b+F)  = 0 \quad \text{on $\partial B(0,1)$}.
	\]
	Since $F$ has zero boundary data we have $i^\ast(F) = 0$, so $i^\ast(d^\hdg b) = 0$. Thus $b$ is a solution to the equation \eqref{eq:hdgbguy}.
	In the previous arguments we have already used that $b$ is smooth, which follows as in \cite[Theorem 6.12]{CDK12}, but
	our main burden is to prove the estimate \eqref{eq:hdgbguyest}.
	
	Let $\psi \in C^\infty(\Ep^2\overline{B(0,1)})$, $\psi = \psi_{\alpha \beta} dx^\alpha \wedge dx^\beta$, and denote by $E\psi \in C_c^\infty(\Ep^2\R^n)$ an extension with
	\[
	\|E\psi\|_{W^{k,q}(\R^n)} \aleq_{k,q} \|\psi\|_{W^{k,q}(B(0,1))}.
	\]
	Since $F=F_\alpha dx^\alpha$ vanishes outside of $B(0,1)$, we have
	\[
	\begin{split}
		\int_{B(0,1)} \langle F, d^\hdg \psi\rangle =& \int_{\R^n} \langle F, d^\hdg E\psi\rangle\\
		=&  \sum_{\alpha,\beta}\int_{\R^n} F_\beta \, \partial_\alpha E\psi_{\alpha \beta}\\
		\overset{\psi_{\alpha \beta} = -\psi_{\beta \alpha}}{=}&  \sum_{\alpha,\beta}\int_{\R^n} \brac{F_\beta \, \partial_\alpha E\psi_{\alpha \beta}-F_\alpha \, \partial_\beta E\psi_{\alpha \beta}}\\
		=&\sum_{\alpha,\beta}\int_{\R^n} \brac{\Rz_\alpha \brac{F_\beta } -\Rz_\beta\brac{F_\alpha}} \laps{1} E\psi_{\alpha \beta}\\
		\aleq&\|\Rz^\perp F\|_{L^p(\R^n)} \|\psi\|_{W^{1,p'}(B(0,1))}\\
	\end{split}
	\]
	From \eqref{eq:pdedhdg} we thus conclude
	\begin{equation}\label{eq:dastbest}
		\abs{\int_{B(0,1)} \langle d^\hdg b, d^\hdg \psi\rangle } \aleq \|\Rz^\perp F\|_{L^p(\R^n)} \|\psi\|_{W^{1,p'}(\Ep^1 B(0,1))}.
	\end{equation}
	By duality, we may find $\eta \in C_c^\infty(\Ep^1 B)$ with $\|\eta\|_{L^{p'}(B(0,1))} \leq 1$ such that
	\[
	\|d^\hdg b\|_{L^{p}(B(0,1))} \leq 2\abs{\int_{B(0,1)} \langle d^\hdg b, \eta\rangle }.
	\]
	By Hodge decomposition of $\eta$, \cite[(5.16)]{ISS99},
	\[
	\eta = d^\hdg \psi + \phi \quad \text{in }B(0,1),
	\]
	where $\phi = d\alpha + h$, with $\alpha \in W^{1,p}(\Ep^0 B(0,1))$ satisfying $i^*(d\alpha)=0$ on $\dr B(0,1)$ and $h \in L^p(\Ep^1 B(0,1))$ verifying $dh=0$, $d^\hdg h =0$ in $B(0,1)$ and $i^*h=0$ on $\dr B$. Thus, \cite[Corollary 3.21.]{CDK12} and \cite[Theorem 3.23.]{CDK12}
	\[
	\begin{cases}
		d \phi = 0 \quad &\text{in $B(0,1)$}\\
		\nu \wedge (d\alpha + h) = 0 \quad&\text{on $\partial B(0,1)$}.
	\end{cases}
	\]
	and we have \cite[(5.18)]{ISS99},
	\[
	\|\psi\|_{W^{1,p}(B(0,1))} \aleq \|\eta\|_{L^p(B(0,1))}.
	\]
	Integration by parts, \cite[Theorem 3.28]{CDK12}, implies
	\[
	\int_{B(0,1)} \langle d^\hdg b, \phi\rangle + \int_{B(0,1)} \langle \underbrace{d\phi}_{=0},b\rangle = \int_{\partial B(0,1)} \langle \underbrace{\nu \wedge \phi}_{=0},b\rangle.
	\]
	Consequently,
	\[
	\begin{split}
		\|d^\hdg b\|_{L^{p}(B(0,1))} \leq &2\abs{\int_{B(0,1)} \langle d^\hdg b, d^\hdg \psi\rangle }\\
		\overset{\eqref{eq:dastbest}}{\aleq}&\|\Rz^\perp F\|_{L^p(\R^n)}.
	\end{split}
	\]
	We apply once more \cite[Theorem 6.3.]{ISS99} to find $h \in W^{1,p}(\Ep^2 B(0,1))$, $dh  =0$, $d^\hdg h = 0$, such that
	\[
	\|b-h\|_{W^{1,p}(\Ep^2 B(0,1))} \aleq \|\underbrace{db}_{=0}\|_{L^p(\Ep^3 B(0,1))} \|d^\hdg b\|_{L^{p}(\Ep^2 B(0,1))} \aleq \|\Rz^\perp F\|_{L^p(\R^n)}.
	\]
	Setting $\bar{b} := b-h$ we observe that $\bar{b}$ still satisfies \eqref{eq:hdgbguy} but also \eqref{eq:hdgbguyest}.
	We can conclude.
\end{proof}

\begin{proof}[Proof of \Cref{pr:hodge}]
	We take $a$ from \Cref{la:hodge:aguy} and $b$ from \Cref{la:hodge:bguy}, which satisfy the claimed estimates.
	
	Set
	\[
	h := F - da -d^\hdg b \in C^\infty(\Ep^1 \overline{B(0,1)}).
	\]
	We can conclude once we show
	\begin{equation}\label{eq:hequiv0hodge}h \equiv 0 \quad \text{in $B(0,1)$}.\end{equation}
	For \eqref{eq:hequiv0hodge} we observe by the choice of $b$ from \Cref{la:hodge:bguy},
	\[
	dh = dF - dd^\hdg b = 0 \quad \text{in $B(0,1)$}
	\]
	and by the choice of $a$ from \Cref{la:hodge:aguy}
	\[
	d^\hdg h = d^\hdg F - dd^\hdg a \quad \text{in $B(0,1)$}.
	\]
	So $h$ is harmonic, $dh = 0$, $d^\hdg h = 0$.
	
	By assumption we have
	\[
	i^\ast F = 0 \quad \text{on $\partial B(0,1)$}
	\]
	Since
	\[
	i^\ast(a) = 0, \quad \text{so } i^\ast(da) = 0,
	\]
	and
	\[
	i^\ast (d^\hdg b) = 0,
	\]
	we conclude $i^\ast h = 0$. By Poincar\'e lemma we may write $h = d\tilde{h}$, where $\tilde{h}$ is a $0$-form, that is to say a function. Then we have
	
	\[
	\begin{cases}
		\lap \tilde{h} = 0 \quad &\text{in $B(0,1)$}\\
		\partial_{\tau} h = 0 \quad &\text{on $\partial B(0,1)$}
	\end{cases}
	\]
	The boundary condition means that $\tilde{h}$ is constant on $\partial B(0,1)$, thus $\tilde{h}$ is constant, and consequently $h = d\tilde{h} = 0$. We have established \eqref{eq:hequiv0hodge} and we can conclude.
\end{proof}

Our goal in \Cref{s:iwaniec} is to apply \Cref{pr:hodge} to terms of the form
\[
F_i = Q_{ij} |\nabla w|^{-\eps} \nabla w^j
\]
where $w \in W^{1,n-\eps}_0(B(0,1))$ and $Q_{ij} \in L^\infty \cap W^{1,n}(B(0,1))$.

Observe that this implies that for $p\in [2,\frac{n-\eps}{1-\eps}]$ we can approximate $F_i$ by $F_{i;\delta} \in L^p(\Ep^1\R^n) \cap C_c^\infty(\Ep^1 B(0,1))$, by considering
\[
F_{i;\delta} = Q_{ij;\delta} (|\nabla w_\delta|+\delta)^{-\eps} \nabla w^j_{\delta}
\]
and $w^j_\delta \in C_c^\infty(B(0,1))$, $Q_{ij;\delta} \in C^\infty(\R^n,\R^{N \times N})$ the usual approximations, and we have
\[
\|F_{i;\delta}-F_{i}\|_{L^p(\R^n)} \xrightarrow{\delta \to 0} 0.
\]
That is, we have the following as a consequence of \Cref{pr:hodge}.
\begin{corollary}\label{co:ourhodge}
	Assume $w \in W^{1,n}_0(B(0,1))$ and $Q \in W^{1,n}(B(0,1),\R^{n \times n})$.
	There exist $a^i \in W^{1,p}_0(B(0,1))$ and $B^i \in W^{1,p}(B(0,1),\R^n)$ such that
	\[
	Q_{ij} |\nabla w|^{-\eps} \nabla w^j = \nabla a^i + B^i\quad \text{in $B(0,1)$}
	\]
	and we have the following estimates for all small $\eps > 0$
	\[
	\|a\|_{W^{1,\frac{n-\eps}{1-\eps}}(B(0,1))} \aleq \|\tilde{Q}\|_{L^\infty} \|\nabla w\|_{L^{n-\eps}(B(0,1))}^{1-\eps},
	\]
	and
	\[
	\|B\|_{L^{\frac{n-\eps}{1-\eps}}(B(0,1))} \aleq \brac{|\eps|\, \|\tilde{Q}\|_{L^\infty}+[\tilde{Q}]_{BMO}} \|\nabla w\|_{L^{n-\eps}(B(0,1))}^{1-\eps}
	\]
	Here $\tilde{Q}$ is any extension of $Q$ to $\R^n$.
\end{corollary}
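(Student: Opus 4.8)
The plan is to feed the vector field
$F_i := \sum_{j} Q_{ij}\,|\nabla w|^{-\eps}\nabla w^j$
into the Hodge decomposition of \Cref{pr:hodge}. First I would extend $w$ by zero to all of $\R^n$ — legitimate precisely because $w\in W^{1,n}_0(B(0,1))$, so the extension lies in $W^{1,n}(\R^n)$ — and fix an arbitrary bounded extension $\tilde Q$ of $Q$, which from now on I simply call $Q$. Set $p:=\frac{n-\eps}{1-\eps}$; since $n\geq 3$ and $\eps$ is small we have $p\geq n\geq 2$, so \Cref{pr:hodge} is in range, and $F_i\in L^p(\R^n)$ with support in $\overline{B(0,1)}$. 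Because $F_i$ is only $L^p$ and not smooth, I would invoke the approximation recorded just before the statement: choose $F_{i;\delta}\in C_c^\infty(B(0,1),\R^n)$ with $F_{i;\delta}\to F_i$ in $L^p(\R^n)$, apply \Cref{pr:hodge} to each $F_{i;\delta}$ to get $a^i_\delta\in W^{1,p}_0(B(0,1))$ and $B^i_\delta\in W^{1,p}(B(0,1),\R^n)$ with $F_{i;\delta}=\nabla a^i_\delta+B^i_\delta$, $\|a^i_\delta\|_{W^{1,p}}\aleq\|\Rz\cdot F_{i;\delta}\|_{L^p(\R^n)}$ and $\|B^i_\delta\|_{L^p}\aleq\|\Rz^\perp F_{i;\delta}\|_{L^p(\R^n)}$. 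Since the Riesz transforms are bounded on $L^p$ (\Cref{th:CZ}), the right-hand sides converge to $\|\Rz\cdot F_i\|_{L^p(\R^n)}$ and $\|\Rz^\perp F_i\|_{L^p(\R^n)}$; hence $a^i_\delta$ is bounded in $W^{1,p}$ and $B^i_\delta$ in $L^p$, and along a subsequence $a^i_\delta\rightharpoonup a^i$, $B^i_\delta\rightharpoonup B^i$, giving the decomposition $F_i=\nabla a^i+B^i$ with $a^i\in W^{1,p}_0(B(0,1))$, $B^i\in L^p(B(0,1))$ and, by weak lower semicontinuity, $\|a^i\|_{W^{1,p}}\aleq\|\Rz\cdot F_i\|_{L^p(\R^n)}$, $\|B^i\|_{L^p}\aleq\|\Rz^\perp F_i\|_{L^p(\R^n)}$. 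It then remains to bound these two Riesz norms of $F_i$.

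For the divergence part I would simply combine Calderón--Zygmund (\Cref{th:CZ}) with Hölder: $\|\Rz\cdot F_i\|_{L^p(\R^n)}\aleq\|F_i\|_{L^p(\R^n)}\aleq\|Q\|_{L^\infty}\,\big\||\nabla w|^{-\eps}\nabla w\big\|_{L^p(\R^n)}$, and since $(1-\eps)p=n-\eps$ the last quantity equals $\|\nabla w\|_{L^{n-\eps}(\R^n)}^{1-\eps}=\|\nabla w\|_{L^{n-\eps}(B(0,1))}^{1-\eps}$. This already yields the asserted bound on $\|a^i\|_{W^{1,p}}$.

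The core of the proof is the estimate for $\Rz^\perp F_i$, where the gain of a factor $|\eps|$ comes from Iwaniec's stability result. Writing $G^j:=|\nabla w|^{-\eps}\nabla w^j$, so that $\|G^j\|_{L^p(\R^n)}\leq\|\nabla w\|_{L^{n-\eps}(B(0,1))}^{1-\eps}$ by the same computation, I would split, for each pair of indices $\alpha,\beta$,
$\Rz_\alpha(Q_{ij}G^j_\beta)-\Rz_\beta(Q_{ij}G^j_\alpha) = [\Rz_\alpha,Q_{ij}]G^j_\beta-[\Rz_\beta,Q_{ij}]G^j_\alpha + Q_{ij}\big(\Rz_\alpha G^j_\beta-\Rz_\beta G^j_\alpha\big)$.
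The commutator terms are controlled by \Cref{th:crw}: $\big\|[\Rz_\alpha,Q_{ij}]G^j_\beta\big\|_{L^p(\R^n)}\aleq[Q_{ij}]_{BMO}\,\|G^j\|_{L^p(\R^n)}\aleq[Q]_{BMO}\,\|\nabla w\|_{L^{n-\eps}(B(0,1))}^{1-\eps}$. The remaining term is where the antisymmetry of the original potential pays off: $G^j=|\nabla w|^{-\eps}\nabla w^j$ is precisely of the form to which \Cref{th:iwaniec} applies (with base exponent $n-\eps$, which for small $\eps$ lies in a fixed interval $(p_1,p_2)$ containing $n$, and with small negative parameter $-\eps$, using that for the zero-extended $w$ one has $\Rz^\perp\nabla w^j=0$), so $\big\|\Rz_\alpha G^j_\beta-\Rz_\beta G^j_\alpha\big\|_{L^p(\R^n)}\aleq|\eps|\,\|\nabla w\|_{L^{n-\eps}(B(0,1))}^{1-\eps}$, and therefore $\big\|Q_{ij}(\Rz_\alpha G^j_\beta-\Rz_\beta G^j_\alpha)\big\|_{L^p(\R^n)}\aleq|\eps|\,\|Q\|_{L^\infty}\,\|\nabla w\|_{L^{n-\eps}(B(0,1))}^{1-\eps}$. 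Summing over $j$ and taking the maximum over $\alpha,\beta$ gives $\|\Rz^\perp F_i\|_{L^p(\R^n)}\aleq\big([Q]_{BMO}+|\eps|\,\|Q\|_{L^\infty}\big)\|\nabla w\|_{L^{n-\eps}(B(0,1))}^{1-\eps}$, which is exactly the bound on $\|B^i\|_{L^p}$ claimed in the corollary.

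The step I expect to need the most care — and the one place the argument could be delicate — is the application of \Cref{th:iwaniec} to $G^j$: one must check that $|\nabla w|^{-\eps}\nabla w^j$ with $w$ extended by zero is honestly an admissible perturbation $|\nabla u|^{\eps'}\nabla u$ on all of $\R^n$ (this is what makes the curl $O(|\eps|)$ rather than merely $O(1)$, and is precisely where the boundary condition $w\in W^{1,n}_0$ enters), and that the commutator and Calderón--Zygmund bounds are applied to genuine $L^p(\R^n)$ functions, which is the role of the smooth approximation quoted before the statement. The rest is routine: essentially the Hölder identity $(1-\eps)\tfrac{n-\eps}{1-\eps}=n-\eps$ and weak compactness.
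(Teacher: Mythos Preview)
Your proposal is correct and follows essentially the same route as the paper: apply \Cref{pr:hodge} (via the smooth approximation recorded just before the statement) to $F_i=\tilde Q_{ij}|\nabla w|^{-\eps}\nabla w^j$, bound $\|\Rz\cdot F_i\|_{L^p}$ by Calder\'on--Zygmund plus H\"older, and bound $\|\Rz^\perp F_i\|_{L^p}$ by the commutator splitting with \Cref{th:crw} for the $[\Rz,Q]$-piece and \Cref{th:iwaniec} for the remaining $\Rz^\perp(|\nabla w|^{-\eps}\nabla w)$. The paper's proof is exactly this, just written more tersely.
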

\begin{proof}
	The estimate for $a$ follows from boundedness of the  Riesz transform, \Cref{th:CZ}, and H\"older's inequality.
	The estimate for $B$ is a consequence of commutator estimates,
	\[
	\begin{split}
		&\|\Rz^\perp \brac{\tilde{Q}_{ij} |\nabla w|^{-\eps} \nabla w^j}\|_{L^{\frac{n-\eps}{1-\eps}}(\R^n)}\\
		\aleq &[\tilde{Q}]_{BMO} \|\nabla w\|_{L^{n-\eps}(B(0,1))}^{1-\eps} + \|\tilde{Q}\|_{L^\infty} \|\Rz^\perp \brac{|\nabla w|^{-\eps} \nabla w^j}\|_{L^{\frac{n-\eps}{1-\eps}}(\R^n)}\\
		\aleq &[\tilde{Q}]_{BMO} \|\nabla w\|_{L^{n-\eps}(B(0,1))}^{1-\eps} + \|\tilde{Q}\|_{L^\infty} |\eps| \|\nabla w\|_{L^{n-\eps}(B(0,1))}^{1-\eps}\\
	\end{split}
	\]
	where in the first line we used \Cref{th:crw} and in the second \Cref{th:iwaniec}.
\end{proof}

\section{Choice of gauge: Theorem~\ref{th:regularityLn1} implies Theorem~\ref{th:main1}}\label{s:gauge}
The Uhlenbeck gauge was introduced in \cite{U82}, the application to harmonic maps is due to Rivi\`{e}re \cite{R07}. See also \cite{S10} for a variational method, \cite[Theorem 1.2]{goldstein2018} for $n \geq 3$. The following result is also often used in the theory of wave maps, see e.g. \cite{SS02}.

\begin{proposition}[Uhlenbeck's Coulomb gauge]\label{pr:uhlenbeck}
	There exists $\gamma > 0$ and $C > 0$ such that the following holds.
	
	Assume that $\Omega_{ij} \in L^n(\R^n,\R^n)$ is antisymmetric, i.e. $\Omega_{ij} = -\Omega_{ji}$, and assume
	\[
	\|\Omega_{ij}\|_{L^n(\R^n)} \leq \gamma
	\]
	Then there exists $Q \in L^\infty(\R^n,SO(N))$, such that
	\[
	\|\nabla Q\|_{L^n(\R^n)} \leq C\|\Omega\|_{L^n(\R^n)},
	\]
	and we have for
	\[
	\Omega^Q := Q \nabla Q^T + Q \Omega Q^T
	\]
	
	\begin{equation}\label{eq:uhlenbeckgauge}
		\div (\Omega^Q ) = 0 \quad \text{in $\R^n$}.
	\end{equation}
\end{proposition}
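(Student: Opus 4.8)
The plan is to follow the now-classical continuity/fixed-point argument of Uhlenbeck, as carried out by Rivière in the $n=2$ case and adapted to $n$ dimensions in the references cited. First I would set up the variational problem: among all $\tilde Q \in W^{1,n}(\R^n, SO(N))$, minimize the functional
\[
E(\tilde Q) := \int_{\R^n} |\tilde Q \nabla \tilde Q^T + \tilde Q \Omega \tilde Q^T|^n,
\]
or, more precisely, work on a large ball (or use the scaling/localization already built into the hypotheses) and add the natural boundary or decay conditions so that the problem is well-posed; since $\Omega \in L^n(\R^n)$ with small norm, $\tilde Q \equiv I$ is an admissible competitor giving $E(I) = \|\Omega\|_{L^n}^n$, so a minimizer $Q$ exists with $E(Q) \le \|\Omega\|_{L^n}^n$. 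The first-order condition (Euler--Lagrange equation) obtained by varying $Q$ by $Q e^{t\xi}$ with $\xi$ antisymmetric is exactly $\div(\Omega^Q) = 0$ in the weak sense, which is \eqref{eq:uhlenbeckgauge}. This immediately gives the divergence-free conclusion.

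Next I would establish the quantitative estimate $\|\nabla Q\|_{L^n} \le C\|\Omega\|_{L^n}$. From $|\tilde Q \nabla \tilde Q^T| = |\nabla \tilde Q|$ (since $\tilde Q$ is orthogonal) and $|\tilde Q \Omega \tilde Q^T| = |\Omega|$, the triangle inequality in $L^n$ gives
\[
\|\nabla Q\|_{L^n} \le E(Q)^{1/n} + \|\Omega\|_{L^n} \le 2\|\Omega\|_{L^n},
\]
so the desired bound holds with $C = 2$ (and in particular $\|\nabla Q\|_{L^n}$ is small, which is what is needed downstream in Proposition~\ref{Lninfty_estimateintro}). The bound $\|Q\|_{L^\infty} \le 1$ and $Q(x) \in SO(N)$ are automatic from the constraint. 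The only genuinely delicate point is the continuity/openness argument needed to guarantee that the minimizer $Q$ actually stays in the ``good'' connected component, i.e. is a genuine Coulomb gauge rather than a spurious critical point — this is handled exactly as in Uhlenbeck \cite{U82} and Rivière \cite{R07}: one runs a continuity method in the parameter $t \in [0,1]$ along the family $t\Omega$, showing the set of $t$ for which a gauge with the estimate exists is nonempty, open (by an implicit function theorem argument on the linearized operator, which is invertible precisely because of the smallness of $\|\Omega\|_{L^n} \le \gamma$), and closed (by the $L^n$-compactness afforded by the uniform estimate). I expect this continuity-method step — verifying invertibility of the linearization uniformly under the smallness threshold $\gamma$ — to be the main obstacle, but since it is verbatim the argument in the cited literature (\cite{U82,R07,S10,goldstein2018}), I would only sketch it and refer to those sources for the full details.
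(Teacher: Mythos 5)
Your proposal contains a mathematical error at the central step. The Euler--Lagrange equation for minimizing $E(\tilde Q) = \int_{\R^n} |\tilde Q\nabla\tilde Q^T + \tilde Q\Omega\tilde Q^T|^n$ over gauge transformations is \emph{not} $\div\Omega^Q = 0$. Computing the first variation along $Q_t = Qe^{t\xi}$ one finds $\frac{d}{dt}\big|_{t=0}\Omega^{Q_t} = -Q(\nabla\xi)Q^T + Q[\xi,\Omega]Q^T$, and after the usual cancellation of the commutator term (using ad-invariance of the Frobenius inner product on $\mathfrak{so}(N)$) the stationarity condition reads $\div\bigl(|\Omega^Q|^{n-2}\Omega^Q\bigr) = 0$ — an $n$-Laplacian type equation, not the linear Coulomb gauge condition. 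Only for $n=2$ do the two coincide. Thus the single variational problem you set up cannot simultaneously deliver both $\div\Omega^Q=0$ and the $L^n$ bound via the clean $E(Q)\le E(I)$ argument; the triangle-inequality estimate $\|\nabla Q\|_{L^n}\le 2\|\Omega\|_{L^n}$ is valid only for the $L^n$-minimizer, which solves the wrong equation. Obtaining $\div\Omega^Q = 0$ together with $\|\nabla Q\|_{L^n}\lesssim\|\Omega\|_{L^n}$ is precisely the hard theorem of \cite{goldstein2018}, proved via a continuity/implicit-function argument in the critical $L^n$ regime, not by energy comparison.

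Beyond this, you also leave untouched the part that is the actual novelty of the proposition as stated: passing from the ball version (where \cite[Theorem~1.2]{goldstein2018} applies) to all of $\R^n$. The paper's proof does exactly this: it rescales $\Omega_R(x) := R\,\Omega(Rx)$ so that $\|\Omega_R\|_{L^n(B(0,1))} = \|\Omega\|_{L^n(B(0,R))}\le\gamma$, applies the ball theorem to get $\tilde P_R$ on $B(0,1)$ with $\tilde P_R - I \in W^{1,n}_0$, scales back and extends by the identity outside $B(0,R)$, and then extracts a weak limit $Q$ as $R\to\infty$ using the uniform $W^{1,n}$ bound and Rellich compactness to pass the divergence-free condition to the limit. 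Your remark that one should ``work on a large ball (or use the scaling/localization already built into the hypotheses)'' gestures at this but does not carry it out; the weak-limit extraction and the correct handling of the identity boundary condition are nontrivial and should be made explicit if you want the argument to apply on $\R^n$.
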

Since \Cref{pr:uhlenbeck} in this form is usually stated on domains such as balls, or with $W^{1,\frac{n}{2}}$-assumptions on $\Omega$, we sketch the proof.
\begin{proof}
	Fix $R > 0$ and set
	\[
	\Omega_R(x) := R \Omega(R\cdot).
	\]
	Observe that
	\[
	\|\Omega\|_{L^n(B(0,1))} = \|\Omega\|_{L^n(B(0,R))} \leq \gamma.
	\]
	By e.g. \cite[Theorem 1.2]{goldstein2018}, if $\gamma$ is small enough, there exists $\tilde{P}_R \in W^{1,n}(B(0,1),SO(N))$ such that
	\[
	\div(\tilde{P}_R \nabla \tilde{P}_R^T + \tilde{P}_R \Omega_R \tilde{P}_R^T) = 0 \quad \text{in $B(0,1)$}
	\]
	and $\tilde{P}_R - I \in W^{1,n}_0(B(0,1),\R^{N \times N})$, where $I$ is the identity matrix in $\R^{N \times n}$, and
	\[
	\|\nabla \tilde{P}_R\|_{L^n(B(0,1))} \aleq_{n,N} \|\Omega_R\|_{L^n(B(0,1))} \leq \|\Omega\|_{L^n(\R^n)}.
	\]
	Setting
	\[
	P_R := \begin{cases}
		\tilde{P}_R (\cdot / R) \quad& \text{in $B(0,R)$}\\
		I \quad &\text{in $\R^n \setminus B(0,R)$}
	\end{cases}
	\]
	we have
	\[
	\div(P_R \nabla P_R^T + P_R \Omega P_R^T) = 0 \quad \text{in $B(0,R)$},
	\]
	moreover $P_R \in W_{loc}^{1,n}(\R^n,SO(N))$, and
	\[
	\|\nabla P_R\|_{L^n(\R^n)} \aleq_{n,N} \|\Omega\|_{L^n(\R^n)}.
	\]
	In particular, up to taking a subsequence $R \to \infty$ we find that $P_R$ converges to some $Q \in L^\infty(\R^n,SO(N))$, $\nabla Q \in L^n(\R^n)$ and, using Rellich's theorem
	\[
	\div(Q \nabla Q^T + Q \Omega Q^T) =0 \quad \text{in $\R^n$}
	\]
	and
	\[
	\|\nabla Q\|_{L^n(\R^n)} \leq \liminf_{R \to \infty} \|\nabla P_R\|_{L^n(\R^n)} \aleq_{n,N} \|\Omega\|_{L^n(\R^n)}.
	\]
\end{proof}

Our main point of view is that instead of using the divergence-free condition \eqref{eq:uhlenbeckgauge} directly in the equation, as is usually done in the ($n$-)harmonic map theory, we use the condition to improve the \emph{regularity} of $\Omega^Q$, as is often done, e.g., in wave maps \cite{SS02}. Namely, the Uhlenbeck gauge regularizes with the regularity of $\Omega$ and $\curl \Omega$. Precisely we have
\begin{proposition}\label{pr:uhlenbeckgauge}
	Take $\gamma$ sufficiently small -- possibly smaller than the one \Cref{pr:uhlenbeck}. Then under the assumptions of \Cref{pr:uhlenbeck} we have additionally
	\[
	\|\nabla Q\|_{L^{(n,2)}(\R^n)} \aleq \|\Omega\|_{L^{(n,2)}(\R^n)},
	\]
	and
	\[
	\|\Omega^Q \|_{L^{(n,1)}(\R^n)} \aleq \|\Omega\|_{L^{(n,2)}(\R^n)}^2 + \|\Rz^\perp \Omega\|_{L^{(n,1)}(\R^n)},
	\]
	whenever the right-hand sides are finite.
\end{proposition}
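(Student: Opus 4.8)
The plan is to reduce both estimates, via the Hodge characterisation \eqref{eq:Rnhodge} on $\R^n$, to bounds on zero-order curls. Since $\div\Omega^Q = 0$ in $\mathcal{D}'(\R^n)$ and $\Omega^Q\in L^n(\R^n)$ (as $\nabla Q\in L^n$, $Q\in L^\infty$, $\Omega\in L^n$), a Fourier-side computation gives $\Rz\cdot\Omega^Q = 0$, so \eqref{eq:Rnhodge} applied entrywise yields
\[
\|\Omega^Q\|_{L^{(n,q)}(\R^n)} \aeq \left\|\Rz^\perp\Omega^Q\right\|_{L^{(n,q)}(\R^n)} \qquad \text{for all } q\in[1,\infty].
\]
The key algebraic point is that, because the Riesz transforms commute with derivatives and $\Rz_\alpha\partial_\beta = \Rz_\beta\partial_\alpha$, the leading part of $\Rz^\perp(Q\nabla Q^T)$ cancels, leaving a sum of Coifman--Rochberg--Weiss commutators of the schematic form $[\Rz_\alpha, Q](\partial_\beta Q^T)$; likewise $\Rz^\perp(Q\Omega Q^T) = Q(\Rz^\perp\Omega)Q^T$ plus commutator terms of the form $[\Rz,Q](\Omega Q^T)$ and $Q[\Rz,Q]\Omega$, and $\Rz\cdot(Q\Omega Q^T) = Q(\Rz\cdot\Omega)Q^T$ plus analogous commutators. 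Throughout I use $\|Q\|_{L^\infty}+\|Q^T\|_{L^\infty}\aleq 1$ (as $Q(x)\in SO(N)$), boundedness of $\Rz$ on Lorentz spaces (\Cref{th:CZ} and real interpolation), and the Lorentz commutator estimate \Cref{th:crwlorentz} with $\nabla Q$ in the role of $\nabla b$.

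For the first estimate I run a finite bootstrap on the second Lorentz exponent of $\nabla Q$, starting from the trivial bound $\nabla Q\in L^{(n,n)}=L^n(\R^n)$ with $\|\nabla Q\|_{L^n}\aleq\gamma$ from \Cref{pr:uhlenbeck}. Write $A := Q\nabla Q^T$, so $\nabla Q^T = Q^T A$ and, from $\Rz\cdot\Omega^Q = 0$ and $\Omega^Q = A + Q\Omega Q^T$, also $\Rz\cdot A = -\Rz\cdot(Q\Omega Q^T)$. By the boundedness of $\Rz$ on Lorentz spaces and one commutator factor $\|\nabla Q\|_{L^n}\aleq\gamma$,
\[
\left\|\Rz\cdot A\right\|_{L^{(n,2)}(\R^n)} \aleq (1+\|\nabla Q\|_{L^n(\R^n)})\,\|\Omega\|_{L^{(n,2)}(\R^n)} \aleq \|\Omega\|_{L^{(n,2)}(\R^n)} .
\]
For the curl, \Cref{th:crwlorentz} applied to the commutator formula for $\Rz^\perp A$ gives, for $q\in[2,n]$,
\[
\left\|\Rz^\perp A\right\|_{L^{(n,q')}(\R^n)} \aleq \|\nabla Q\|_{L^n(\R^n)}\,\|\nabla Q\|_{L^{(n,q)}(\R^n)}, \qquad \tfrac1{q'}=\tfrac1n+\tfrac1q .
\]
Combining these with \eqref{eq:Rnhodge} applied entrywise to $A$, and then $\nabla Q^T = Q^T A$, upgrades $\nabla Q\in L^{(n,q)}$ to $\nabla Q\in L^{(n,\max(2,q'))}$ with a bound $\aleq\|\Omega\|_{L^{(n,2)}}+\gamma\|\nabla Q\|_{L^{(n,q)}}$. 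Since $\tfrac1{q'}=\tfrac1n+\tfrac1q$, iterating this step reaches the exponent $2$ after $\lceil n/2\rceil-1$ rounds (the intermediate exponents $q^{(j)}=n/(j+1)$ all lie strictly above $1$, so \Cref{th:crwlorentz} is admissible at each stage), and the smallness of $\gamma$ sums the resulting geometric series; with $\|\nabla Q\|_{L^n}\aleq\|\Omega\|_{L^n}\aleq\|\Omega\|_{L^{(n,2)}}$ this produces $\|\nabla Q\|_{L^{(n,2)}(\R^n)}\aleq\|\Omega\|_{L^{(n,2)}(\R^n)}$.

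Given $\nabla Q\in L^{(n,2)}$ with this bound, the second estimate is immediate. By the displayed reduction it suffices to bound $\Rz^\perp\Omega^Q = \Rz^\perp(Q\nabla Q^T)+\Rz^\perp(Q\Omega Q^T)$ in $L^{(n,1)}$. Applying \Cref{th:crwlorentz} with $q_1=1$, $q_2=q_3=2$ to the commutator formula for $\Rz^\perp(Q\nabla Q^T)$ gives $\|\Rz^\perp(Q\nabla Q^T)\|_{L^{(n,1)}}\aleq\|\nabla Q\|_{L^{(n,2)}}^2\aleq\|\Omega\|_{L^{(n,2)}}^2$; for $\Rz^\perp(Q\Omega Q^T)$ the main term gives $\|Q(\Rz^\perp\Omega)Q^T\|_{L^{(n,1)}}\aleq\|\Rz^\perp\Omega\|_{L^{(n,1)}}$ and the commutator terms give $\aleq\|\nabla Q\|_{L^{(n,2)}}\|\Omega\|_{L^{(n,2)}}\aleq\|\Omega\|_{L^{(n,2)}}^2$, again via \Cref{th:crwlorentz} with $q_1=1$, $q_2=q_3=2$. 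Summing and invoking \eqref{eq:Rnhodge} once more yields $\|\Omega^Q\|_{L^{(n,1)}(\R^n)}\aleq\|\Omega\|_{L^{(n,2)}(\R^n)}^2+\|\Rz^\perp\Omega\|_{L^{(n,1)}(\R^n)}$.

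The delicate point is the first estimate. One cannot read the $L^{(n,2)}$-bound on $\nabla Q$ directly off the Coulomb equation: the natural identity $\nabla Q^T = Q^T\Omega^Q-\Omega Q^T$ would require $\Rz^\perp\Omega\in L^{(n,2)}$, which is \emph{not} among the hypotheses of the first estimate. Hence the bootstrap is unavoidable, and its bookkeeping---verifying that the Lorentz exponents genuinely improve, that they remain admissible ($>1$) at every stage, and that $\gamma$ can be chosen small enough (possibly smaller than in \Cref{pr:uhlenbeck}) to close the iteration in finitely many, dimension-dependent, steps---is where the real work lies. Once that is in place, the $L^{(n,1)}$-estimate and the reductions to $\Rz^\perp$-quantities are routine Hodge-decomposition and Lorentz commutator arguments.
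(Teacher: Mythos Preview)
Your proof is correct and follows essentially the same approach as the paper: exploit $\Rz\cdot\Omega^Q=0$ together with the Lorentz commutator estimate \Cref{th:crwlorentz} to bootstrap the second Lorentz index of $\nabla Q$ from $n$ down to $2$, and then use \eqref{eq:Rnhodge} plus commutators to bound $\Rz^\perp\Omega^Q$ in $L^{(n,1)}$. The only cosmetic differences are that the paper writes the bootstrap directly for $\laps{1}Q^T$ (via $c\laps{1}Q^T = Q^T[\Rz\cdot,Q](\nabla Q^T)+Q^T\Rz\cdot(Q\Omega Q^T)$) rather than for $A=Q\nabla Q^T$, and places both commutator factors in $L^{(n,2q)}$ so the exponent halves at each step instead of decreasing by $1/q\mapsto 1/q+1/n$ as in your scheme.
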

\begin{proof}
	From \eqref{eq:uhlenbeckgauge} we have
	\[
	\Rz \cdot (Q \nabla Q^T + Q \Omega^T Q) =0 \quad \text{in $\R^n$}.
	\]
	Since $c\laps{1} = -\Rz \cdot \nabla$,
	\[
	c\laps{1} Q^T = Q^T [\Rz \cdot,Q](\nabla Q^T) + Q^T \Rz\cdot (Q \Omega Q^T) \quad \text{in $\R^n$}.
	\]
	Consequently we have in view of \Cref{th:crwlorentz} for any $q \in [1,\infty]$,
	\[
	\|\laps{1} Q\|_{L^{(n,q)}(\R^n)} \aleq \|Q\|_{L^\infty}\, \|\nabla Q\|_{L^{n,2q}(\R^n)}^2 + \|\Omega\|_{L^{n,q}(\R^n)}.
	\]
	Applying this estimate iteratively for $q =\frac{n}{2},\frac{n}{4},\ldots,2$, using that $L^{(n,n)} = L^n$, and observing that since $\gamma < 1$ we can estimate high powers of $\gamma$ by $\gamma$ for convenience,
	\[
	\|\nabla Q\|_{L^{(n,2)}(\R^n)} \aleq \|\nabla Q\|_{L^{n}(\R^n)} + \|\Omega\|_{L^{(n,2)}(\R^n)} \aleq \|\Omega\|_{L^{n}(\R^n)} + \|\Omega\|_{L^{(n,2)}(\R^n)} \aleq 2\|\Omega\|_{L^{(n,2)}(\R^n)} .
	\]
	Now we observe that $\Rz^\perp \nabla = 0$ and thus, observing also our standing assumption that $n \geq 3$,
	\[
	\begin{split}
		\|\Omega^Q \|_{L^{(n,1)}(\R^n)} \overset{\eqref{eq:Rnhodge}}{\aleq}& \|\underbrace{\Rz\cdot \Omega^Q}_{=0} \|_{L^{(n,1)}(\R^n)} +\|\Rz^\perp \Omega^Q \|_{L^{(n,1)}(\R^n)}\\
		\aleq&\|\Rz^\perp  (Q \nabla Q^T)\|_{L^{(n,1)}(\R^n)} + \|\Rz^\perp(Q\Omega Q^T)\|_{L^{(n,1)}(\R^n)}\\
		\aleq& \|[\Rz^\perp  ,Q] (\nabla Q^T)\|_{L^{(n,1)}(\R^n)} + \| [\Rz^\perp,Q]\Omega\|_{L^{(n,1)}(\R^n)} + \|\Rz^\perp \Omega \|_{L^{(n,1)}(\R^n)}\\
		\aleq&\|\nabla Q\|_{L^{(n,2)}(\R^n)}^2 + \|\g Q\|_{L^{(n,2)}(\R^n)}\|\Omega\|_{L^{(n,2)}(\R^n)} + \|\Rz^\perp \Omega\|_{L^{(n,1)}(\R^n)}.
	\end{split}
	\]
	In the last step we used \Cref{th:crwlorentz}.
\end{proof}

\begin{corollary}\label{co:reductiontoLn1}
	There exists $\gamma > 0$ such that the following holds. Consider the setting of \Cref{th:main1} : let $B\subset \R^n$ be a ball and assume that $u\in W^{1,n}(B;\R^N)$ solves the system
	\begin{align*}
		-\di\left( |\g u|^{n-2} \g u^i \right) &= \Omega_{ij} |\g u|^{n-2} \g u^j \ \text{ in }B,
	\end{align*}
	where $\Omega_{ij}\in L^n(B)$ satisfies $\Omega_{ij}=-\Omega_{ji}$ and 
	\begin{align*}
		\|\Omega \|_{L^{(n,2)}(B)} + \|\Rz^\perp \Omega\|_{L^{(n,1)}(\R^n)} &\leq \gamma.
	\end{align*}
	Then there exists $Q\in W^{1,n}(B;SO(N))$ such that the assumptions of \Cref{th:regularityLn1} are satisfied : $\|\g Q\|_{L^n(B)}\aleq \gamma$ and $u$ solves a system of the form
	\begin{align*}
		-\di\left( |\g u|^{n-2}Q \g u^i \right) &= \tilde{\Omega}_{ij} |\g u|^{n-2} \g u^j \ \text{ in }B,
	\end{align*}
	where $\|\tilde{\Omega}\|_{L^{(n,1)}(B)} \aleq \gamma$.
\end{corollary}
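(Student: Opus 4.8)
The plan is to assemble the two gauge results already established, \Cref{pr:uhlenbeck} and \Cref{pr:uhlenbeckgauge}, with Rivière's algebraic change of gauge adapted to the $n$-Laplacian. First I would pass to $\R^n$: extend $\Omega$ by zero to a (still antisymmetric) $\bar\Omega\in L^n(\R^n)$ with compact support. Zero extension preserves decreasing rearrangements, so $\|\bar\Omega\|_{L^{(n,2)}(\R^n)}=\|\Omega\|_{L^{(n,2)}(B)}\le\gamma$, and by the nesting $L^{(n,2)}\hookrightarrow L^{(n,n)}=L^n$ (here $n\ge 3>2$) also $\|\bar\Omega\|_{L^n(\R^n)}\aleq\gamma$; the curl bound $\|\Rz^\perp\bar\Omega\|_{L^{(n,1)}(\R^n)}\le\gamma$ is exactly the hypothesis (this is the sense in which that term of the hypothesis is read). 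Choosing $\gamma$ small enough to be below the thresholds of both \Cref{pr:uhlenbeck} and \Cref{pr:uhlenbeckgauge}, \Cref{pr:uhlenbeck} produces $Q\in L^\infty(\R^n,SO(N))$ with $\g Q\in L^n(\R^n)$, $\|\g Q\|_{L^n(\R^n)}\aleq\|\bar\Omega\|_{L^n(\R^n)}\aleq\gamma$, and $\di\Omega^Q=0$, where $\Omega^Q=Q\g Q^T+Q\bar\Omega Q^T$; then \Cref{pr:uhlenbeckgauge} gives $\|\Omega^Q\|_{L^{(n,1)}(\R^n)}\aleq\|\bar\Omega\|_{L^{(n,2)}(\R^n)}^2+\|\Rz^\perp\bar\Omega\|_{L^{(n,1)}(\R^n)}\aleq\gamma^2+\gamma\aleq\gamma$. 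Restricting to $B$, $Q|_B\in W^{1,n}(B,SO(N))$ with $\|\g Q\|_{L^n(B)}\aleq\gamma$, which is the first assertion.

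Next I would carry out the gauge change inside the equation. Since $|\g u|^{n-2}\g u^j\in L^{n/(n-1)}(B)$ and $\di(|\g u|^{n-2}\g u^i)=-\Omega_{ij}\cdot|\g u|^{n-2}\g u^j\in L^1(B)$ (Hölder), while $Q\in W^{1,n}\cap L^\infty$, the distributional Leibniz rule applies on $B$ and, after relabelling dummy target indices and inserting the original system,
\[
-\di\big(Q_{ik}|\g u|^{n-2}\g u^k\big)=-Q_{ik}\,\di\big(|\g u|^{n-2}\g u^k\big)-\g Q_{ik}\cdot|\g u|^{n-2}\g u^k=\big(Q\Omega-\g Q\big)_{ij}\cdot|\g u|^{n-2}\g u^j .
\]
Differentiating $QQ^T=I$ gives $\g Q\,Q^T=-Q\g Q^T$, hence $\big(Q\Omega-\g Q\big)Q^T=Q\Omega Q^T+Q\g Q^T=\Omega^Q$, i.e. $Q\Omega-\g Q=\Omega^Q Q$ (using $\Omega=\bar\Omega$ on $B$ and $Q^TQ=I$). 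Therefore $u$ solves $-\di(Q_{ik}|\g u|^{n-2}\g u^k)=\tilde\Omega_{ij}\cdot|\g u|^{n-2}\g u^j$ in $B$ with $\tilde\Omega_{ij}:=(\Omega^Q Q)_{ij}=\sum_k\Omega^Q_{ik}Q_{kj}$, which is precisely the structure demanded by \Cref{th:regularityLn1}.

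Finally, since $Q(x)\in SO(N)$ forces $|Q(x)|\le c(N)$ pointwise, we get $|\tilde\Omega|\le c(N)\,|\Omega^Q|$ pointwise, so
\[
\|\tilde\Omega\|_{L^{(n,1)}(B)}\le\|\tilde\Omega\|_{L^{(n,1)}(\R^n)}\aleq\|\Omega^Q\|_{L^{(n,1)}(\R^n)}\aleq\gamma,
\]
which finishes the proof. The only steps that are more than bookkeeping are the distributional justification of the Leibniz computation (so that the transformed equation genuinely holds in $\mathcal{D}'(B)$, not merely formally) and tracking which smallness of $\gamma$ is consumed by \Cref{pr:uhlenbeck} versus \Cref{pr:uhlenbeckgauge}; the substantive analysis — in particular the gain from $L^{(n,2)}$, $\Rz^\perp\in L^{(n,1)}$ to $\Omega^Q\in L^{(n,1)}$ — has already been done in those propositions (and, upstream, in \Cref{pr:hodge}), so I expect no real obstacle here: this corollary is the assembly step.
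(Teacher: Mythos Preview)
Your proposal is correct and follows essentially the same approach as the paper: extend $\Omega$ by zero, invoke \Cref{pr:uhlenbeck} and \Cref{pr:uhlenbeckgauge} to obtain $Q$ with $\Omega^Q\in L^{(n,1)}$, perform the Leibniz computation $-\div(Q|\nabla u|^{n-2}\nabla u)=(Q\Omega-\nabla Q)|\nabla u|^{n-2}\nabla u$, and then use $(\nabla Q)Q^T=-Q\nabla Q^T$ to identify $\tilde\Omega=Q\Omega-\nabla Q=\Omega^Q Q$. Your write-up is in fact slightly more careful than the paper's (you justify the distributional Leibniz rule and the preservation of Lorentz norms under zero extension), but the argument is the same.
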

\begin{proof}
	We observe that we have
	\[
	- \div(Q |\nabla u|^{n-2} \nabla u) = \brac{-\nabla Q + Q\Omega } |\nabla u|^{n-2} \nabla u.
	\]
	We extend $\Omega$ to $\R^n$ by zero, and choose $Q$ from \Cref{pr:uhlenbeck}. Observe $(\nabla Q) Q^T = - Q \nabla Q^T$, so from \Cref{pr:uhlenbeckgauge} and the assumptions of \Cref{th:main1} we find
	\[
	\|\nabla Q + Q \Omega\|_{L^{(n,1)}} =\|\Omega_Q Q\|_{L^{(n,1)}} \aleq \gamma,
	\]
	and
	\[
	\|\nabla Q\|_{L^n(\R^n)} \aleq \gamma.
	\]
	Thus the assumptions of \Cref{th:regularityLn1} are satisfied.
\end{proof}

\section{Estimates below the natural exponent}\label{s:iwaniec}
Throughout this section, we denote $B_1$ the unit ball of $\R^n$ and $B(r)$ the ball with same center of $B_1$ and radius $r$.

Let $f\in L^1(B_1;\R^N) $ and $G\in L^\frac{n}{n-1}(B_1;\R^n\otimes \R^N)$. We assume that $u \in W^{1,n}(B_1;\R^N)$ solves
\[
\div(Q|\nabla u|^{n-2} \nabla u) = f + \di G \quad \text{in $B_1$},
\]
where $Q \in L^\infty(B_1,SO(N))$ and $\nabla Q \in L^n(B_1,\R^n \otimes \R^N)$.

In this section we prove theorems that are substantially motivated by the estimates obtained by Kuusi--Mingione \cite[Section 5]{KM18}. However, we were not able to use their techniques, in particular the $n$-harmonic approximation, because of the presence of $Q$. Instead we use the Iwaniec' stability result, \Cref{th:iwaniec}, to establish somewhat similar behavior.

The first result is a relatively standard consequence of Iwaniec' stability result, see e.g. \cite[Section~4.4]{MPS22}.
\begin{lemma}\label{la:easyiwaniec}
	Fix $\sigma \in (0,1)$ and $\theta \in (0,\frac{1}{4})$, There exists $\eps_0=\eps_0(n,N,\sigma,\theta) \in (0,1)$ and $\gamma_0=\gamma_0(n,N,\sigma) \in (0,1)$ such that the following holds:
	
	If $R<\frac{1}{2}$ and
	\[
	\|\nabla Q\|_{L^n(B(2R))} \leq \gamma_0
	\]
	and $u \in W^{1,n}(B(2R))$ satisfies
	\begin{equation}\label{eq:secondQnablauest}
		\div(Q|\nabla u|^{n-2} \nabla u) = f + \di G \text{ in $B(R)$}.
	\end{equation}
	Then for any $\eps \in (0,\eps_0)$
	\[
	\begin{split}
		(\theta R)^{-\eps}\|\nabla u\|_{L^{n-\eps}(B(\theta R))}^{n-\eps} \leq& \sigma\, R^{-\eps}\|\nabla u\|_{L^{n-\eps}(B(R))}^{n-\eps}\\
		&+C(\sigma) \brac{ R^{-\eps} \|\nabla u\|_{L^{n-\eps}(B(2R))}^{n-\eps} - (\theta R)^{-\eps}\|\nabla u\|_{L^{n-\eps}(B(\theta R))}^{n-\eps}}\\
		&+C(\eps,\sigma) \left( \|f\|_{L^1(B(R))}^{\frac{n-\eps}{n-1}} + \| G \|_{L^\frac{n}{n-1}(B(R))}^\frac{n-\eps}{n-1} \right)\\
	\end{split}
	\]
\end{lemma}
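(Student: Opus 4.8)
The statement is a decay-type estimate for solutions of a perturbed $n$-Laplace system, and the engine behind it is Iwaniec' stability result, \Cref{th:iwaniec}, which says that $|\nabla u|^{-\eps}\nabla u$ is \emph{almost} curl-free when $\eps$ is small. The plan is to compare $u$ on the small ball $B(\theta R)$ with the $n$-harmonic function (with respect to the frozen coefficient $Q(x_0)$, say $Q\equiv I$ after a rotation) sharing its boundary data on $B(R)$, and to control the error through the right-hand side $f+\di G$ and through the Iwaniec defect of $|\nabla u|^{-\eps}\nabla u$. First I would rescale so that $R=1$, absorbing the powers $R^{-\eps}$ into the notation; note these weights are what make the estimate scale-invariant and track a monotone-type quantity $\rho\mapsto \rho^{-\eps}\|\nabla u\|_{L^{n-\eps}(B(\rho))}^{n-\eps}$.

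The key steps, in order: (i) For fixed small $\eps$, use the Hodge/Iwaniec machinery of \Cref{s:iwaniec} (via \Cref{co:ourhodge} applied to $F_i = Q_{ij}|\nabla u|^{-\eps}\nabla u^j$ on a subball) to write $Q|\nabla u|^{-\eps}\nabla u = \nabla a + B$ with $\|B\|_{L^{\frac{n-\eps}{1-\eps}}}$ controlled by $|\eps|\,\|\nabla u\|_{L^{n-\eps}}^{1-\eps}$ plus a $[Q]_{BMO}$-term, which is small because $\|\nabla Q\|_{L^n}\le\gamma_0$ forces $Q$ close to a constant in BMO. (ii) Use this decomposition together with the equation \eqref{eq:secondQnablauest} to show that $u$ is, up to a controllably small error in $L^{n-\eps}$, the gradient of a harmonic (or $p$-harmonic) comparison function on $B(R)$; here the contributions of $f$ and $\di G$ enter through duality testing, producing the $\|f\|_{L^1}^{(n-\eps)/(n-1)}$ and $\|G\|_{L^{n/(n-1)}}^{(n-\eps)/(n-1)}$ terms after a Sobolev embedding $L^1\hookrightarrow (W^{1,\text{large}})^\ast$. (iii) Invoke the standard decay estimate for ($n$-)harmonic functions: on the smaller ball $B(\theta R)$ the rescaled energy drops, which yields the leading term $\sigma R^{-\eps}\|\nabla u\|_{L^{n-\eps}(B(R))}^{n-\eps}$ once $\theta$ is chosen small relative to $\sigma$. (iv) Collect the error terms. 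The term $C(\sigma)\big(R^{-\eps}\|\nabla u\|_{L^{n-\eps}(B(2R))}^{n-\eps} - (\theta R)^{-\eps}\|\nabla u\|_{L^{n-\eps}(B(\theta R))}^{n-\eps}\big)$ is a ``hole-filling'' remainder: it arises because the comparison function is built on $B(R)$ but one must pass from an energy on $B(2R)$ (where $\|\nabla Q\|_{L^n}$ is small) down to $B(\theta R)$, and the difference of the outer and inner weighted energies bounds exactly the energy in the annulus that the comparison argument leaves uncontrolled. Finally choose $\eps_0=\eps_0(n,N,\sigma,\theta)$ small so the Iwaniec defect $C|\eps|$ is absorbed, and $\gamma_0=\gamma_0(n,N,\sigma)$ small so the BMO-smallness of $Q$ is absorbed.

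The main obstacle, as the authors themselves flag in the introduction, is that the presence of the matrix $Q$ prevents a direct $n$-harmonic approximation à la Kuusi--Mingione: one cannot simply say ``$u$ is close to $n$-harmonic'' because the frozen operator $\di(Q(x_0)|\nabla\cdot|^{n-2}\nabla\cdot)$ is not the $n$-Laplacian unless $Q(x_0)\in SO(N)$ acts trivially — which it does up to an orthogonal change of target coordinates, but keeping track of this while the Iwaniec exponent shift $\eps$ is active requires care. So the delicate point is to run the Iwaniec stability estimate and the Hodge decomposition \emph{uniformly} in $\eps$ and with constants depending only on $n,N$ (and $\sigma$), while the commutator term $[Q]_{BMO}$ is made small purely from the $L^n$-smallness of $\nabla Q$ — which is legitimate since $\dot W^{1,n}\hookrightarrow BMO$ with a small constant when the $\dot W^{1,n}$-seminorm is small. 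The bookkeeping of the three small parameters ($\eps$, $\gamma_0$, and the decay ratio $\theta$) and ensuring the final absorption leaves a clean factor $\sigma$ in front of the leading term is where the real work lies; the rest is standard elliptic comparison plus the Lorentz/Lebesgue inequalities recalled in \Cref{s:commie}.
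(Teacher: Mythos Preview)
Your plan conflates this lemma with \Cref{lm:distance_harm}. There is \emph{no} $n$-harmonic comparison function in the paper's proof of \Cref{la:easyiwaniec}, and your step (iii) --- ``invoke the standard decay estimate for $n$-harmonic functions \ldots\ once $\theta$ is chosen small relative to $\sigma$'' --- cannot produce the stated conclusion: $\theta\in(0,\tfrac14)$ is \emph{fixed} in the hypothesis, not chosen after $\sigma$. A comparison-plus-decay argument would only yield the $\sigma$-factor for $\theta$ small depending on $\sigma$, which is a weaker statement.

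What the paper actually does is more direct. Writing $\int \eta_{R/4}|\nabla u|^{n-\eps}$ via $Q^TQ=I$ as $\int Q|\nabla u|^{n-2}\nabla u : \eta_{R/4}\,Q|\nabla u|^{-\eps}\nabla u$, one Hodge-decomposes the \emph{test-function side} $\tilde Q|\nabla\tilde u|^{-\eps}\nabla\tilde u=\nabla\varphi+B$ (this is your step (i), which is right). Then one tests the PDE with $\eta_{R/4}\varphi$; the four resulting terms are bounded by Young's inequality with a free parameter $\delta$. The $\sigma$-factor comes entirely from taking $\delta$ small --- not from any decay of a comparison map --- and the hole-filling difference arises because the cutoff $\nabla\eta_{R/4}$ is supported in an annulus, producing $\|\nabla u\|_{L^{n-\eps}(B(R)\setminus B(R/4))}^{n-\eps}$, which one then rewrites as the stated difference. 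The $\eps_0(\sigma,\theta)$ dependence enters only to control factors like $\theta^{-\eps}$ and $\theta^{-\eps}-1$ (and the Iwaniec defect), while $\gamma_0(\sigma)$ absorbs the $[\tilde Q]_{BMO}$ contribution. So your Hodge/Iwaniec ingredient (i) and your absorption remarks at the end are on target, but the intermediate mechanism (ii)--(iii) is the wrong one for this lemma.
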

\begin{proof}
	Denote for $\rho > 0$ the cutoff functions $\eta_{\rho} \in C_c^\infty(B(2\rho))$, $\eta \equiv 1$ in $B(\rho)$, $|\eta| \leq 1$ in $\R^n$, and $|\nabla \eta_{\rho}| \aleq  \frac{1}{\rho}$.
	
	Let
	\[
	\tilde{u} := \eta_{\frac{R}{2}} (u-(u)_{B(R)}) + (u)_{B(R)}.
	\]
	Observe that $\supp \nabla \tilde{u} \subset B(R)$.
	
	And
	\[
	\tilde{Q} := \eta_{\frac{R}{2}} (Q-(Q)_{B(R)}) + (Q)_{B(R)}
	\]
	for which we observe that $[\tilde{Q}]_{BMO} \aleq \|\nabla Q\|_{L^n(B(R))}$.
	
	By Hodge decomposition on $\R^n$
	\[
	\tilde{Q}|\nabla \tilde{u}|^{-\eps} \nabla \tilde{u} = \nabla \varphi + B
	\]
	where for any $p \in (1,\infty)$, $q \in [1,\infty]$
	\[
	\|\nabla \varphi\|_{L^{(p,q)}(\R^n)} \aleq_{p,q} \|\tilde{Q}\|_{L^\infty}\, \||\nabla \tilde{u}|^{1-\eps}\|_{L^{(p,q)}(\R^n)},
	\]
	the constant here is independent of $\eps$; also by \Cref{th:crw} and \Cref{th:iwaniec} for all suitably small $\eps > 0$
	\begin{equation}\label{eq:estimateB}
		\|B\|_{L^{\frac{n-\eps}{1-\eps}}(\R^n)} \aleq \brac{|\eps|\|\tilde{Q}\|_{L^\infty}+[\tilde{Q}]_{BMO}} \|\nabla \tilde{u}\|_{L^{n-\eps}(\R^n)}^{1-\eps}
	\end{equation}
	We then have
	
	\begin{eqnarray*}
		\int_{\R^n} \eta_{\frac{R}{4}} |\nabla u|^{n-\eps} & = &\int_{\R^n} \eta_{\frac{R}{4}} Q|\nabla u|^{n-2}\nabla u: Q|\nabla u|^{-\eps} \nabla u\\
		& \overset{Q^T Q = I}{=}&\int Q|\nabla u|^{n-2}\nabla u: \eta_{\frac{R}{4}} \nabla \varphi\ +\int_{\R^n} \tilde{Q}|\nabla \tilde{u}|^{n-2}\nabla \tilde{u}: \eta_{\frac{R}{4}} B\\
		& = &\int_{\R^n} \tilde{Q}|\nabla \tilde{u}|^{n-2}\nabla \tilde{u}: \nabla (\eta_{\frac{R}{4}} \varphi) -\int_{\R^n} \tilde{Q}|\nabla \tilde{u}|^{n-2}\nabla \tilde{u}: \nabla \eta_{\frac{R}{4}}\, \varphi +\int_{\R^n} \tilde{Q}|\nabla \tilde{u}|^{n-2}\nabla \tilde{u}: \eta_{\frac{R}{4}} B\\
	\end{eqnarray*}
	
	Using \eqref{eq:secondQnablauest}
	\begin{equation}\label{eq:decompo_gu}
		\int_{\R^n} \eta_{\frac{R}{4}} |\nabla u|^{n-\eps} \leq  I + II + III+IV,
	\end{equation}
	where
	\[
	I = \abs{\int_{\R^n} f \eta_{\frac{R}{4}} \varphi},
	\]
	\[
	II = \abs{\int_{\R^n} Q|\nabla u|^{n-2}\nabla u: \nabla \eta_{\frac{R}{4}}\, \varphi},
	\]
	\[
	III = \abs{\int_{\R^n} Q|\nabla u|^{n-2}\nabla u: \eta_{\frac{R}{4}} B},
	\]
	\[
	IV = \abs{\int_{\R^n} G:\g(\eta_{\frac{R}{4}} \varphi) }.
	\]
	Fix $\delta > 0$. For $I$, we observe as before, for any $\eps > 0$
	
	\begin{align}
		I \aleq& \|f\|_{L^1(B(R))} \|\varphi\|_{L^\infty(\R^n)} \nonumber\\
		\aleq& \|f\|_{L^1(B(R))} \|\nabla \varphi\|_{L^{(n,1)}(\R^n)} \nonumber \\
		\aleq& \|f\|_{L^1(B(R))} \||\nabla \tilde{u}|^{1-\eps}\|_{L^{(n,1)}(B(R))} \nonumber\\
		\aleq& C(\eps)\|f\|_{L^1(B(R))} \||\nabla \tilde{u}|^{1-\eps}\|_{L^{\frac{n-\eps}{1-\eps}}(B(R))}  \nonumber\\
		\aleq&C(\eps) \|f\|_{L^1(B(R))} \, R^{\eps \frac{n-1}{n-\eps}} \|\nabla u\|_{L^{n-\eps}(B(R))}^{1-\eps}  \nonumber\\
		\aleq &\delta \|\nabla u\|_{L^{n-\eps}(B(R))}^{n-\eps} + C(\eps,\delta) R^\eps \brac{\|f\|_{L^1(B(R))}}^{\frac{n-\eps}{n-1}}. \label{eq:estimateI}
	\end{align}
	
	For $II$, it holds
	
	\begin{align}
		II\aleq& \|Q\|_{L^\infty}\, \frac{1}{R} \|\nabla u\|_{L^{n-\eps}\left(B(\frac{R}{2}) \setminus B(\frac{R}{4}) \right)}^{n-1}\, \|\varphi\|_{L^{\frac{n-\eps}{1-\eps}}(\R^n)} \nonumber\\
		\aleq&\frac{1}{R} \|\tilde{Q}\|_{L^\infty(\R^n)}\, \|\nabla u\|_{L^{n-\eps}\left(B(\frac{R}{2}) \setminus B(\frac{R}{4}) \right) }^{n-1}\, \|\nabla \varphi\|_{L^{\frac{n(n-\eps)}{n(1-\eps)+(n-\eps)}}(\R^n)} \nonumber\\
		\aleq&\frac{1}{R} \|\tilde{Q}\|_{L^\infty(\R^n)}\, \|\nabla u\|_{L^{n-\eps}\left(B(\frac{R}{2}) \setminus B(\frac{R}{4}) \right)}^{n-1}\, \|\nabla \tilde{u}\|_{L^{\frac{n(n-\eps)(1-\eps)}{n(1-\eps)+(n-\eps)}}(\R^n)}^{1-\eps}  \nonumber\\
		\aleq&\frac{1}{R} \|\tilde{Q}\|_{L^\infty(\R^n)}\, \|\nabla u\|_{L^{n-\eps}\left(B(\frac{R}{2}) \setminus B(\frac{R}{4}) \right)}^{n-1}\, \|\nabla u\|_{L^{\frac{n(n-\eps)(1-\eps)}{n(1-\eps)+(n-\eps)}}(B(R))}^{1-\eps}  \nonumber\\
		\aleq&\|\nabla u\|_{L^{n-\eps}\left(B(\frac{R}{2}) \setminus B(\frac{R}{4}) \right)}^{n-1}\, \|\nabla u\|_{L^{n-\eps}(B(R))}^{1-\eps}  \nonumber\\
		\aleq&\delta\|\nabla u\|_{L^{n-\eps}(B(R))}^{n-\eps}+C(\delta) \|\nabla u\|_{L^{n-\eps}\left(B(R) \setminus B(\frac{R}{4}) \right)}^{n-\eps}. \label{eq:estimateII}
	\end{align}
	
	Observe that the constants are independent of $\eps$ when $\eps$ is small, since $\frac{n(n-\eps)}{n(1-\eps)+(n-\eps)} \aeq \frac{n}{2}$. To estimate $III$, we use \eqref{eq:estimateB} :
	
	\begin{align}
		III\aleq &\|Q\|_{L^\infty} \|\eta_{\frac{R}{4}} \nabla u\|_{L^{n-\eps}(\R^n)}^{n-1} \|B\|_{L^{\frac{n-\eps}{1-\eps}}(\R^n)} \nonumber\\
		\aleq&\|\eta_{\frac{R}{4}} \nabla u\|_{L^{n-\eps}(\R^n)}^{n-1}  \brac{|\eps|\|\tilde{Q}\|_{L^\infty}+[Q]_{VMO}} \|\nabla \tilde{u}\|_{L^{n-\eps}(\R^n)}^{1-\eps}  \nonumber\\
		\aleq&\brac{\eps\|\tilde{Q}\|_{L^\infty}+[\tilde{Q}]_{VMO}}\, \|\nabla u\|_{L^{n-\eps}(B(R))}^{n-\eps}  \nonumber\\
		\aleq&\brac{\eps+\|\nabla Q\|_{L^n(B(R))}}\, \|\nabla u\|_{L^{n-\eps}(B(R))}^{n-\eps}. \label{eq:estimateIII}
	\end{align}

For $IV$, it holds
\begin{align}
	IV &\aleq \|G\|_{L^\frac{n}{n-1}(B(R))} \| \g (\eta_\frac{R}{4} \vp)\|_{L^n(B(R))} \nonumber\\
	&\aleq \|G\|_{L^\frac{n}{n-1}(B(R))} \left( \frac{1}{R}\| \vp\|_{L^n(B(R)\setminus B(\frac{R}{4}))} + \|\g\vp \|_{L^n(B(R))} \right)\nonumber\\
	&\aleq C(\eps)\|G\|_{L^\frac{n}{n-1}(B(R))} \left( \| \vp\|_{L^\infty(\R^n)} + \|\g\vp \|_{L^n(B(R))} \right) \nonumber\\
	&\aleq C(\eps)\|G\|_{L^\frac{n}{n-1}(B(R))} \left( \| \g \vp\|_{L^{(n,1)}(\R^n)} + \|\g\vp \|_{L^n(B(R))} \right) \nonumber\\
	&\aleq C(\eps)\|G\|_{L^\frac{n}{n-1}(B(R))} \left( \| |\g \tilde{u}|^{1-\eps} \|_{L^{(n,1)}(\R^n)} + \| |\g \tilde{u}|^{1-\eps} \|_{L^n(B(R))} \right) \nonumber\\
	&\aleq C(\eps)\|G\|_{L^\frac{n}{n-1}(B(R))} R^{\eps\frac{n-1}{n-\eps}} \|\g u \|_{L^{n-\eps}(B(R))}^{1-\eps} \nonumber\\
	&\aleq \delta\|\g u \|_{L^{n-\eps}(B(R))}^{1-\eps} + C(\eps,\delta) R^\eps\|G\|_{L^\frac{n}{n-1}(B(R))}^\frac{n-\eps}{n-1}. \label{eq:estimateIV}
\end{align}
	
	Thanks to \eqref{eq:decompo_gu}-\eqref{eq:estimateI}-\eqref{eq:estimateII}-\eqref{eq:estimateIII}-\eqref{eq:estimateIV}, we have shown
	\[
	\begin{split}
		(\theta R)^{-\eps}\|\nabla u\|_{L^{n-\eps}(B(\theta R))}^{n-\eps} \leq& \theta^{-\eps}2\delta\, R^{-\eps}\|\nabla \tilde{u}\|_{L^{n-\eps}(B(R))}^{n-\eps}\\
		&+C(\eps,\delta) \theta^{-\eps}\left( \|f\|_{L^1(B(R))}^{\frac{n-\eps}{n-1}} + \|G\|_{L^\frac{n}{n-1}(B(R))}^\frac{n-\eps}{n-1} \right)\\
		&+C\, \theta^{-\eps} \brac{ \eps+\|\nabla Q\|_{L^n(B(R))}}\, R^{-\eps} \|\nabla u\|_{L^{n-\eps}(B(R))}^{n-\eps}\\
		&+C(\delta) (\theta R)^{-\eps} \|\nabla u\|_{L^{n-\eps}\left( B(R)\setminus B(\frac{R}{4}) \right)}^{n-\eps}.\\
	\end{split}
	\]
	For the last term we observe
	
	\begin{align*}
		(\theta R)^{-\eps} \|\nabla u\|_{L^{n-\eps}(B(R)\setminus B(R/4))}^{n-\eps} \leq&(\theta R)^{-\eps} \|\nabla u\|_{L^{n-\eps}(B(R)\setminus B(\theta R))}^{n-\eps}\\
		=&(\theta R)^{-\eps} \|\nabla u\|_{L^{n-\eps}(B(R))}^{n-\eps} - (\theta R)^{-\eps}\|\nabla u\|_{L^{n-\eps}(B(\theta R))}^{n-\eps}\\
		=&R^{-\eps} \|\nabla u\|_{L^{n-\eps}(B(R))}^{n-\eps} - (\theta R)^{-\eps}\|\nabla u\|_{L^{n-\eps}(B(\theta R))}^{n-\eps} +(\theta^{-\eps} -1)   R^{-\eps}\|\nabla u\|_{L^{n-\eps}(B(R))}^{n-\eps}.
	\end{align*}
	So we arrive at
	\[
	\begin{split}
		(\theta R)^{-\eps}\|\nabla u\|_{L^{n-\eps}(B(\theta R))}^{n-\eps} \leq& \theta^{-\eps}2\delta\, R^{-\eps}\|\nabla \tilde{u}\|_{L^{n-\eps}(B(R))}^{n-\eps}\\
		&+ C(\delta) (\theta^{-\eps} -1)  \, R^{-\eps}\|\nabla u\|_{L^{n-\eps}(B(R))}^{n-\eps}\\
		&+C(\eps,\delta) \theta^{-\eps}\left( \|f\|_{L^1(B(R))}^{\frac{n-\eps}{n-1}} + \|G\|_{L^\frac{n}{n-1}(B(R))}^\frac{n-\eps}{n-1} \right)\\
		&+C\, \theta^{-\eps} \brac{ \eps+\|\nabla Q\|_{L^n(B(R))}}\, R^{-\eps} \|\nabla u\|_{L^{n-\eps}(B(R))}^{n-\eps}\\
		&+C(\delta) \brac{R^{-\eps} \|\nabla u\|_{L^{n-\eps}(B(R))}^{n-\eps} - (\theta R)^{-\eps}\|\nabla u\|_{L^{n-\eps}(B(\theta R))}^{n-\eps}}.
	\end{split}
	\]
	In a first step, for given $\sigma > 0$ we first choose $\delta > 0$ small so that
	\[
	\begin{split}
		(\theta R)^{-\eps}\|\nabla u\|_{L^{n-\eps}(B(\theta R))}^{n-\eps} \leq& \theta^{-\ve}\frac{\sigma}{8} R^{-\eps}\|\nabla \tilde{u}\|_{L^{n-\eps}(B(R))}^{n-\eps}\\
		&+ C(\sigma) (\theta^{-\ve}-1)  \, R^{-\eps}\|\nabla u\|_{L^{n-\eps}(B(R))}^{n-\eps}\\
		&+C(\eps,\sigma) \left( \|f\|_{L^1(B(R))}^{\frac{n-\eps}{n-1}} + \|G\|_{L^\frac{n}{n-1}(B(R))}^\frac{n-\eps}{n-1} \right)\\
		&+C \theta^{-\ve}\brac{\eps+\gamma_0}\, R^{-\eps} \|\nabla u\|_{L^{n-\eps}(B(R))}^{n-\eps}\\
		&+C(\sigma)\brac{R^{-\eps} \|\nabla u\|_{L^{n-\eps}(B(R))}^{n-\eps} - (\theta R)^{-\eps}\|\nabla u\|_{L^{n-\eps}(B(\theta R))}^{n-\eps}}.
	\end{split}
	\]
	Then we can assume that $\eps_0$ is small enough (depending on $\sigma$ and $\theta$) so that for any $\eps \in (0,\eps_0)$ we have $\theta^{-\eps} \leq 2$, $C\ve \leq \frac{\sigma}{16}$ and $\theta^{-\eps} -1 \leq \sqrt{\eps}$. This last condition is assured by the asymptotic expansion $e^{-\ve\log(\theta)}-1 \underset{\ve\to 0}{\sim} - \ve \log(\theta)$. Then we have for any $\eps \in (0,\eps_0)$
	\[
	\begin{split}
		(\theta R)^{-\eps}\|\nabla u\|_{L^{n-\eps}(B(\theta R))}^{n-\eps} \leq& \frac{\sigma}{4} R^{-\eps}\|\nabla \tilde{u}\|_{L^{n-\eps}(B(R))}^{n-\eps}\\
		&+ C(\sigma) \sqrt{\eps}  \, R^{-\eps}\|\nabla u\|_{L^{n-\eps}(B(R))}^{n-\eps}\\
		&+C(\eps,\sigma) \left( \|f\|_{L^1(B(R))}^{\frac{n-\eps}{n-1}} + \|G\|_{L^\frac{n}{n-1}(B(R))}^\frac{n-\eps}{n-1}\right) \\
		&+ \brac{\frac{\sigma}{8}+2C\gamma_0}\, R^{-\eps} \|\nabla u\|_{L^{n-\eps}(B(R))}^{n-\eps}\\
		&+C(\sigma) \brac{R^{-\eps} \|\nabla u\|_{L^{n-\eps}(B(R))}^{n-\eps} - (\theta R)^{-\eps}\|\nabla u\|_{L^{n-\eps}(B(\theta R))}^{n-\eps}}.
	\end{split}
	\]
	Next we reduce again $\eps_0$ so that for any $\eps \in (0,\eps_0)$ we can absorb $C(\sigma)$, and we choose $\gamma_0>0$ such that $2C\gamma_0 \leq \frac{\sigma}{1000}$:
	\[
	\begin{split}
		(\theta R)^{-\eps}\|\nabla u\|_{L^{n-\eps}(B(\theta R))}^{n-\eps} \leq& \frac{\sigma}{2} R^{-\eps}\|\nabla \tilde{u}\|_{L^{n-\eps}(B(R))}^{n-\eps}\\
		&+C(\eps,\sigma) \left( \|f\|_{L^1(B(R))}^{\frac{n-\eps}{n-1}} + \|G\|_{L^\frac{n}{n-1}(B(R))}^\frac{n-\eps}{n-1} \right)\\
		&+C(\sigma)\brac{R^{-\eps} \|\nabla u\|_{L^{n-\eps}(B(R))}^{n-\eps} - (\theta R)^{-\eps}\|\nabla u\|_{L^{n-\eps}(B(\theta R))}^{n-\eps}}.
	\end{split}
	\]
\end{proof}

\Cref{la:easyiwaniec} is not enough to conclude regularity for $n$-harmonic map systems or similar equation because $\|f\|_{L^1}$ decays in terms of $\|\nabla u\|_{L^n}$, not $\|\nabla u\|_{L^{n-\eps}}$. This is the main reason for all the additional differentiability assumptions in the literature, cf. \cite{SS17}.

The following result is motivated by the iterative estimates as in \cite[Lemma 5.2 or Proposition 5.1]{KM18}. Kuusi--Mingione developed a $n$-harmonic approximation theory for this \cite[Section 4]{KM18}-- which we were note able to reproduce for our operator $\div(Q |\nabla u|^{n-2} \nabla u)$ because of the rotation $Q$. Instead, we turn to Iwaniec' stability theorem to obtain comparable estimates without harmonic approximation -- which works because we consider $W^{1,p}$ for $p \approx n$, so Iwaniec' stability allows us to move into continuous test-functions with controlled expenses in error terms.

\begin{lemma}\label{lm:distance_harm}
	Fix $\sigma \in (0,1)$. There exists $\eps_1=\eps_1(n,N,\sigma) > 0$ such that the following holds.
	
	For any $\eps \in (0,\eps_1)$ there exists $\gamma_1 = \gamma_1(n,N,\eps,\sigma) > 0$ with the following properties.
	
	Assume $u \in W^{1,n}(B(4/3),\R^N)$, $f \in L^1(B(1),\R^N)$, $G\in L^\frac{n}{n-1}(B(1),\R^n\otimes\R^N)$, $Q \in W^{1,n}(B(4/3),SO(N))$ are solutions to
	\[
	\div(Q|\nabla u|^{n-2} \nabla u) = f + \di G\quad \text{in $B(1)$}.
	\]
	Assume moreover
	\begin{itemize} \item $\partial B(1)$ is a good slice in the sense that
		\begin{equation}\label{eq:goodslice}
			[u]_{W^{1-\frac{1}{n},n}(\partial B(1))} \aleq [u]_{W^{1-\frac{1}{n},n}(B(4/3))} \aleq \|\nabla u\|_{L^{\frac{n^2}{n+1}}(B(4/3))}.
		\end{equation}
		\item $v$ solves
		\[
		\begin{cases}
			\lap_n v = 0 \quad &\text{in $B(1)$},\\
			v =u \quad &\text{on $\partial B(1)$}.
		\end{cases}
		\]
		\item $\|\nabla Q\|_{L^n(B(4/3))} \leq \gamma_1$.
	\end{itemize}
	Then we have the estimate
	\[
	\begin{split}
		\|\nabla u - \nabla v\|_{L^{n-\eps}(B(1))}  \aleq&C(\sigma,\eps) \left( \|f\|_{L^1(B(1))}^{\frac{1}{n-1}} + \| G\|_{L^\frac{n}{n-1}(B(1))}^\frac{1}{n-1} \right) +\sigma \|\nabla u\|_{L^{n-\eps}(B(4/3))} .
	\end{split}
	\]
\end{lemma}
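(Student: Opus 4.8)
The plan is to run a comparison estimate between $u$ and its $n$-harmonic replacement $v$ on $B(1)$, in which the rotation $Q$ is used twice: it is folded into the monotone $n$-Laplace term through the identity $Q^{T}Q=I$ so that an Iwaniec--Hodge correction becomes small, and in the term involving $v$ only $\nabla Q$ is retained rather than the size-one matrix $Q-I$. First I set $w:=u-v$, which lies in $W^{1,n}_{0}(B(1))\subset W^{1,n-\eps}_{0}(B(1))$. For $n\geq2$ the $p$-monotonicity inequality $\langle|X|^{n-2}X-|Y|^{n-2}Y,\,X-Y\rangle\geq c_{n}|X-Y|^{n}$ for matrices $X,Y$ (Frobenius norm), multiplied by $|\nabla w|^{-\eps}$ and integrated, gives
\[
c_{n}\int_{B(1)}|\nabla w|^{n-\eps}\leq\int_{B(1)}\bigl(|\nabla u|^{n-2}\nabla u-|\nabla v|^{n-2}\nabla v\bigr):\bigl(|\nabla w|^{-\eps}\nabla w\bigr).
\]
Since $Q(x)\in SO(N)$ acts as an isometry on the $\R^{N}$-factor, $|Q\nabla w|=|\nabla w|$ and the right-hand side equals $\int_{B(1)}Q\bigl(|\nabla u|^{n-2}\nabla u-|\nabla v|^{n-2}\nabla v\bigr):Q|\nabla w|^{-\eps}\nabla w$. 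Applying \Cref{co:ourhodge} to $F^{i}:=Q_{ij}|\nabla w|^{-\eps}\nabla w^{j}$, I write $Q|\nabla w|^{-\eps}\nabla w=\nabla a+B$ in $B(1)$ with $a\in W^{1,\frac{n-\eps}{1-\eps}}_{0}(B(1))$ and
\[
\|\nabla a\|_{L^{\frac{n-\eps}{1-\eps}}(B(1))}\aleq\|\nabla w\|_{L^{n-\eps}(B(1))}^{1-\eps},\qquad\|B\|_{L^{\frac{n-\eps}{1-\eps}}(B(1))}\aleq\bigl(\eps+[\tilde Q]_{BMO(\R^{n})}\bigr)\|\nabla w\|_{L^{n-\eps}(B(1))}^{1-\eps},
\]
where $\tilde Q$ is the extension of $Q|_{B(1)}$ obtained by cutting $Q-(Q)_{B(1)}$ off to zero outside $B(4/3)$; by Poincaré on $B(4/3)$ one has $[\tilde Q]_{BMO(\R^{n})}\aleq\|\nabla Q\|_{L^{n}(B(4/3))}\leq\gamma_{1}$, so $B$ is small. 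Since $\frac{n-\eps}{1-\eps}>n$, Morrey's embedding also yields $\|a\|_{L^{\infty}(B(1))}\aleq C(\eps)\|\nabla a\|_{L^{\frac{n-\eps}{1-\eps}}(B(1))}$.

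Inserting $\nabla a+B$ produces four terms. For $\int Q|\nabla u|^{n-2}\nabla u:\nabla a$, testing $\div(Q|\nabla u|^{n-2}\nabla u)=f+\di G$ against $a$ (admissible, as $a\in W^{1,n}_{0}\cap L^{\infty}$) gives $-\int f\cdot a+\int G:\nabla a$, hence a bound $\aleq C(\eps)\bigl(\|f\|_{L^{1}(B(1))}+\|G\|_{L^{\frac{n}{n-1}}(B(1))}\bigr)\|\nabla w\|_{L^{n-\eps}(B(1))}^{1-\eps}$. For $\int Q|\nabla v|^{n-2}\nabla v:\nabla a$ I do \emph{not} write $Q=I+(Q-I)$; instead, since $Q_{ij}a^{i}\in W^{1,n}_{0}(B(1))$ is admissible for the $j$-th equation of $\lap_{n}v=0$, the principal part cancels and only the $\nabla Q$-term survives, giving $-\int(\nabla Q\cdot|\nabla v|^{n-2}\nabla v)\cdot a$ and a bound $\aleq C(\eps)\gamma_{1}\|\nabla v\|_{L^{n}(B(1))}^{n-1}\|\nabla w\|_{L^{n-\eps}(B(1))}^{1-\eps}$. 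The two $B$-terms are handled by Hölder with conjugate exponents $\frac{n-\eps}{n-1}$ and $\frac{n-\eps}{1-\eps}$, producing $\aleq(\eps+\gamma_{1})\bigl(\|\nabla u\|_{L^{n-\eps}(B(1))}^{n-1}+\|\nabla v\|_{L^{n-\eps}(B(1))}^{n-1}\bigr)\|\nabla w\|_{L^{n-\eps}(B(1))}^{1-\eps}$.

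To turn the factors $\|\nabla v\|_{L^{n}(B(1))}$ and $\|\nabla v\|_{L^{n-\eps}(B(1))}$ into the sub-natural quantity $\|\nabla u\|_{L^{n-\eps}(B(4/3))}$ — the crux of the argument — I use that $v$ minimizes the $n$-energy among $W^{1,n}$-extensions of $u|_{\partial B(1)}$, so by the trace theorem and the good-slice assumption \eqref{eq:goodslice},
\[
\|\nabla v\|_{L^{n}(B(1))}\aleq[u]_{W^{1-\frac{1}{n},n}(\partial B(1))}\aleq\|\nabla u\|_{L^{\frac{n^{2}}{n+1}}(B(4/3))}\aleq\|\nabla u\|_{L^{n-\eps}(B(4/3))}
\]
(Hölder, valid for $\eps<\frac{n}{n+1}$), while trivially $\|\nabla u\|_{L^{n-\eps}(B(1))}\leq\|\nabla u\|_{L^{n-\eps}(B(4/3))}$. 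Collecting the four bounds, applying Young's inequality with conjugate exponents $\frac{n-\eps}{1-\eps}$, $\frac{n-\eps}{n-1}$ to each product, and absorbing a small multiple of $\|\nabla w\|_{L^{n-\eps}(B(1))}^{n-\eps}$, I reach $\|\nabla w\|_{L^{n-\eps}(B(1))}^{n-\eps}\aleq C(\eps)\bigl(\|f\|_{L^{1}(B(1))}+\|G\|_{L^{\frac{n}{n-1}}(B(1))}\bigr)^{\frac{n-\eps}{n-1}}+\bigl(\eps+C(\eps)\gamma_{1}\bigr)^{\frac{n-\eps}{n-1}}\|\nabla u\|_{L^{n-\eps}(B(4/3))}^{n-\eps}$. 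Taking $(n-\eps)$-th roots and using subadditivity of $t\mapsto t^{1/(n-\eps)}$, the coefficient in front of $\|\nabla u\|_{L^{n-\eps}(B(4/3))}$ becomes $\aleq\eps^{\frac{1}{n-1}}+\bigl(C(\eps)\gamma_{1}\bigr)^{\frac{1}{n-1}}$; since the $\eps$-part carries no diverging constant, one first fixes $\eps_{1}=\eps_{1}(n,N,\sigma)$ so that $\eps^{1/(n-1)}\leq\sigma$ for $\eps<\eps_{1}$, and then, for each such $\eps$, picks $\gamma_{1}=\gamma_{1}(n,N,\eps,\sigma)$ so small that $(C(\eps)\gamma_{1})^{1/(n-1)}\leq\sigma$. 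This gives the claim.

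The step I expect to be the main obstacle is precisely this double bookkeeping of $Q$: one has to feed $Q$ into the monotone term via $Q^{T}Q=I$ so that Iwaniec' stability (\Cref{th:iwaniec}) together with the Coifman--Rochberg--Weiss estimate in Lorentz spaces (\Cref{th:crwlorentz}), used with the small norm $[\tilde Q]_{BMO}\aleq\gamma_{1}$, genuinely make the curl correction $B$ small, while in the term carrying the $n$-harmonic $v$ one must resist linearizing $Q$ around the identity — the size-one $Q-I$ would wreck the smallness — and instead exploit $\lap_{n}v=0$ to annihilate the principal part, leaving only the small factor $\nabla Q$. A more routine point is that the good slice \eqref{eq:goodslice} is exactly what allows the final right-hand side to be expressed in the sub-natural space $L^{n-\eps}(B(4/3))$ rather than $L^{n}$.
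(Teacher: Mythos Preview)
Your proposal is correct and follows essentially the same approach as the paper: the monotonicity inequality multiplied by $|\nabla w|^{-\eps}$, insertion of $Q$ via $Q^TQ=I$, the Hodge decomposition of $Q|\nabla w|^{-\eps}\nabla w$ from \Cref{co:ourhodge}, the same four-term splitting with the PDE used for the $u$-term and $\lap_n v=0$ used to reduce the $v$-term to a $\nabla Q$-factor, and the good-slice estimate \eqref{eq:goodslice} to bring $\|\nabla v\|_{L^n}$ down to $\|\nabla u\|_{L^{n-\eps}(B(4/3))}$. The only cosmetic difference is that you test $Q_{ij}a^i$ directly against $\lap_n v=0$, whereas the paper first rewrites $\div(Q|\nabla v|^{n-2}\nabla v)=\nabla Q\,|\nabla v|^{n-2}\nabla v$ and then tests against $a$; these are the same integration by parts.
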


\begin{proof}[Proof of \Cref{lm:distance_harm}]
	We recall the following well-known inequality (cf. e.g. \cite{B05}, $:$ denotes the Hilbert-Schmidt scalar product)
	\[
	|X-Y|^p \leq C_p \brac{|X|^{p-2} X - |Y|^{p-2} Y}: \brac{X-Y}, \quad  \forall X,Y  \in \R^{n \times N}, \quad p \in [2,\infty).
	\]
	Then we have, using $Q^T Q = I$
	\[
	\begin{split}
		&\int_{B(1)} |\nabla u - \nabla v|^{n-\eps}\\
		\aleq&   \int_{B(1)} \brac{|\nabla u|^{n-2} \nabla v^i - |\nabla v|^{n-2} \nabla v^i} :|\nabla (u-v)|^{-\eps} \nabla (u-v)^i\\
		=&   \int_{B(1)} \brac{Q_{ik} |\nabla u|^{n-2} \nabla u^k - Q_{ik}|\nabla v|^{n-2} \nabla v^k}: Q_{ij} |\nabla (u-v)|^{-\eps} \nabla (u-v)^j.
	\end{split}
	\]
	Since $u-v \in W^{1,n}_0(B(1))$ we find from Hodge decomposition, \Cref{co:ourhodge}, $a^i \in W^{1,\frac{n-\eps}{1-\eps}}_0(B(1))$ and $B^i \in L^{\frac{n-\eps}{1-\eps}}(B(1),\R^n)$ such that
	\[
	Q_{ij} |\nabla (u-v)|^{-\eps} d(u-v)^j = \nabla a^i + B^i \quad \text{in $B(1)$},
	\]
	and the estimates (choosing in \Cref{co:ourhodge} the extension $\tilde{Q} := \eta (Q-(Q)_{B(4/3)}) + (Q)_{B(4/3)}$ for some cutoff function $\eta \in C_c^\infty(B(4/3))$, $\eta \equiv 1$ in $B(1)$)
	\begin{equation}\label{eq:diff:1}
		\|a\|_{W^{1,\frac{n-\eps}{1-\eps}}(B(1))} \aleq \|Q\|_{L^\infty(B(4/3))} \|\nabla (u-v)\|_{L^{n-\eps}(B(1))}^{1-\eps},
	\end{equation}
	and
	\begin{equation}\label{eq:diff:2}
		\|B\|_{L^{\frac{n-\eps}{1-\eps}}(B(1))} \aleq \brac{\eps\, \|Q\|_{L^\infty(B(4/3))}+\|\nabla Q\|_{L^n(B(4/3))}} \|\nabla (u-v)\|_{L^{n-\eps}(B(1))}^{1-\eps}.
	\end{equation}
	We now drop the indices for better readability, and have
	\begin{equation}\label{eq:dif:dec}
		\int_{B(1)} |\nabla u - \nabla v|^{n-\eps} \aleq I + II + III + IV
	\end{equation}
	where
	\[
	\begin{split}
		I :=&\abs{\int_{B(1)} Q|\nabla u|^{n-2} \nabla u: \nabla a} \\
		II := &\abs{\int_{B(1)} Q|\nabla v|^{n-2} \nabla v: \nabla a} \\
		III := & \abs{\int_{B(1)} Q|\nabla u|^{n-2} \nabla u:B} \\
		IV := &\abs{\int_{B(1)} Q|\nabla v|^{n-2} \nabla v: B}.
	\end{split}
	\]
	
	Fix now some small $\delta > 0$.
	
	Since $a$ has zero-boundary data, and by Sobolev embedding, and then H\"older inequality (observe that $\frac{n-\eps}{1-\eps} > n$),
	\[
	\begin{split}
		I = \abs{\int_{B(1)} f a + G:\g a} \aleq & \|f\|_{L^1(B(1))}\, \|a\|_{L^\infty(B(1))} + \|G\|_{L^\frac{n}{n-1}(B(1))} \|\g a \|_{L^n(B(1))}\\
		\aleq &\|f\|_{L^1(B(1))}\, \|\nabla a\|_{L^{(n,1)}(B(1))} + \|G\|_{L^\frac{n}{n-1}(B(1))} \|\g a \|_{L^n(B(1))}\\
		\aleq &C(\eps) \left( \|f\|_{L^1(B(1))} + \|G\|_{L^\frac{n}{n-1}(B(1))} \right) \|\nabla a\|_{L^{\frac{n-\eps}{1-\eps}}(B(1))}\\
		\overset{\eqref{eq:diff:1}}{\aleq}& C(\eps)  \left( \|f\|_{L^1(B(1))} + \|G\|_{L^\frac{n}{n-1}(B(1))} \right) \|\nabla (u-v)\|_{L^{n-\eps}(B(1))}^{1-\eps}.
	\end{split}
	\]
	That is, by Young's inequality
	\[
	I \leq \delta \|\nabla (u-v)\|_{L^{n-\eps}(B(1))}^{n-\eps} +  C(\delta,\eps)  \left( \|f\|_{L^1(B(1))}^{\frac{n-\eps}{n-1}} + \|G\|_{L^\frac{n}{n-1}(B(1))}^{\frac{n-\eps}{n-1}} \right).
	\]

	As for $II$, observe that $v$ is $n$-harmonic. Thus $\nabla v$ is $L^n$-norm minimizing, and thus the usual harmonic extension $u^h$ of $u \Big |_{\partial B(1)}$ is a competitor, that is from trace theorem and H\"older's inequality,
	\begin{equation}\label{eq:harmonicv}
		\begin{split}
			\|\nabla v\|_{L^{n}(B(1))} \leq &\|\nabla u^h\|_{L^{n}(B(1))} \aleq [u]_{W^{1-\frac{1}{n},n}(\partial B(1))} \overset{\eqref{eq:goodslice}}{\aleq} \|\nabla u\|_{L^{\frac{n^2}{n+1}}(B(4/3))}
			\overset{\eps \ll 1}{\aleq}\|\nabla u\|_{L^{n-\eps}(B(4/3))}
		\end{split}
	\end{equation}
	Also observe that
	\[
	\div(Q |\nabla v|^{n-2} \nabla v) = \nabla Q |\nabla v|^{n-2} \nabla v \quad \text{in $B(1)$},
	\]
	so that we find
	\[
	\begin{split}
		II \aleq & \|\nabla Q\|_{L^n(B(1))} \|\nabla v\|_{L^n(B(1))}^{n-1}\, \|a\|_{L^\infty(B(1))} \aleq C(\eps)  \|\nabla Q\|_{L^n(B(1))}\|\nabla u\|_{L^{n-\eps}(B(4/3))}^{n-1} \|\nabla (u-v)\|_{L^{n-\eps}(B(1))}^{1-\eps}.
	\end{split}
	\]
	Consequently,
	\[
	II \aleq \delta \|\nabla (u-v)\|_{L^{n-\eps}(B(1))}^{n-\eps} + C(\eps,\delta) \|\nabla Q\|_{L^n(B(1))}^{\frac{n-\eps}{n-1}}\, \|\nabla u\|_{L^{n-\eps}(B(4/3))}^{n-\eps}.
	\]
	From \eqref{eq:diff:2} we can estimate
	\[
	III \aleq \|\nabla u\|_{L^{n-\eps}(B(1))}^{n-1} \|B\|_{L^{\frac{n-\eps}{1-\eps}}(B(1))} \aleq\brac{\eps +\|\nabla Q\|_{L^n(B(4/3))}} \|\nabla (u-v)\|_{L^{n-\eps}(B(1))}^{n-\eps}
	\]
	With \eqref{eq:harmonicv} we conclude
	\[
	III \aleq \brac{\eps +\|\nabla Q\|_{L^n(B(4/3))}} \|\nabla u\|_{L^{n-\eps}(B(1))}^{n-\eps}.
	\]
	Lastly, with the help of H\"older's inequality, \eqref{eq:harmonicv}, and \eqref{eq:diff:2}
	\[
	IV \aleq \|\nabla v\|_{L^{n}(B(1))}^{n-1}\, \|B\|_{L^{\frac{n-\eps}{1-\eps}}(B(1))} \aleq \|\nabla u\|_{L^{n-\eps}(B(1))}^{n-1} \brac{\eps +\|\nabla Q\|_{L^n(B(4/3))}} \|\nabla (u-v)\|_{L^{n-\eps}(B(1))}^{1-\eps}
	\]
	Thus, again by Young's inequality we have
	\[
	IV \leq \delta \|\nabla (u-v)\|_{L^{n-\eps}(B(1))}^{n-\eps} + C(\delta)\brac{\eps +\|\nabla Q\|_{L^n(B(4/3))}}^{\frac{n-\eps}{n-1}}  \|\nabla u\|_{L^{n-\eps}(B(1))}^{n-\eps}
	\]
	Plugging all these estimates together we obtain from \eqref{eq:dif:dec} and using the smallness of $\|\nabla Q\|_{L^n(B(4/3))}$,
	\[
	\begin{split}
		\|\nabla (u-v)\|_{L^{n-\eps}(B(1))}^{n-\eps} \leq& 3\delta \|\nabla (u-v)\|_{L^{n-\eps}(B(1))}^{n-\eps}\\
		&+  C(\delta,\eps)  \left( \|f\|_{L^1(B(1))}^{\frac{n-\eps}{n-1}} + \|G\|_{L^\frac{n}{n-1}(B(1))}^{\frac{n-\eps}{n-1}} \right)\\
		&+ C(\eps,\delta) \gamma_1^{\frac{n-\eps}{n-1}}\, \|\nabla u\|_{L^{n-\eps}(B(4/3))}^{n-\eps}\\
		&+\brac{\eps +\gamma_1} \|\nabla u\|_{L^{n-\eps}(B(1))}^{n-\eps}\\
		&+ C(\delta)\brac{\eps+\gamma_1}^{\frac{n-\eps}{n-1}}  \|\nabla u\|_{L^{n-\eps}(B(1))}^{n-\eps}.
	\end{split}
	\]
	Choosing $\delta$ sufficiently small, we can absorb the first term on the right-hand side and have found
	\[
	\begin{split}
		\|\nabla (u-v)\|_{L^{n-\eps}(B(1))}^{n-\eps} \leq&   C(\eps)  \left( \|f\|_{L^1(B(1))}^{\frac{n-\eps}{n-1}} + \|G\|_{L^\frac{n}{n-1}(B(1))}^{\frac{n-\eps}{n-1}} \right)\\
		&+ C(\eps) \gamma^{\frac{n-\eps}{n-1}}\, \|\nabla u\|_{L^{n-\eps}(B(4/3))}^{n-\eps}\\
		&+\brac{\eps +\gamma} \|\nabla u\|_{L^{n-\eps}(B(1))}^{n-\eps}\\
		&+ \brac{\eps+\gamma}^{\frac{n-\eps}{n-1}}  \|\nabla u\|_{L^{n-\eps}(B(1))}^{n-\eps}.
	\end{split}
	\]
	Now we can choose $\eps_1$ small enough so that for any $\eps \in (0,\eps_1)$ we have
	\[
	\begin{split}
		\|\nabla (u-v)\|_{L^{n-\eps}(B(1))}^{n-\eps} \leq&   C(\eps)  \left( \|f\|_{L^1(B(1))}^{\frac{n-\eps}{n-1}} + \|G\|_{L^\frac{n}{n-1}(B(1))}^{\frac{n-\eps}{n-1}} \right)\\
		&+ C(\eps) \gamma^{\frac{n-\eps}{n-1}}\, \|\nabla u\|_{L^{n-\eps}(B(4/3))}^{n-\eps}\\
		&+\gamma \|\nabla u\|_{L^{n-\eps}(B(1))}^{n-\eps}\\
		&+ \gamma^{\frac{n-\eps}{n-1}}  \|\nabla u\|_{L^{n-\eps}(B(1))}^{n-\eps}\\
		& + \left( \frac{\sigma}{10} + \brac{\frac{\sigma}{4}}^{n-\eps} \right) \|\nabla u\|_{L^{n-\eps}(B(1))}^{n-\eps}.
	\end{split}
	\]
	For any $\eps \in (0,\eps_0)$ we can then choose a small $\gamma$ that compensates for $C(\eps)$ in the second line, so that
	\[
	\begin{split}
		\|\nabla (u-v)\|_{L^{n-\eps}(B(1))}^{n-\eps} \leq&   C(\eps)  \left( \|f\|_{L^1(B(1))}^{\frac{n-\eps}{n-1}} + \|G\|_{L^\frac{n}{n-1}(B(1))}^{\frac{n-\eps}{n-1}} \right) +\left( \frac{\sigma}{2} + \brac{\frac{\sigma}{2}}^{n-\eps} \right) \|\nabla u\|_{L^{n-\eps}(B(4/3))}^{n-\eps}.
	\end{split}
	\]
	We can conclude.

\end{proof}

\section{Lorentz-space estimates for n-Laplacian systems: Proof of Proposition~\ref{Lninfty_estimateintro}}\label{s:lorentz}
The goal of this section is to prove the following result which is probably interesting on its own
\begin{proposition}\label{Lninfty_estimate}
	There are some uniform constants $\Gamma=\Gamma(n,N)>1$ and $\tau_1 =\tau_1(n,N)\in(0,\frac{1}{4})$  and $\alpha = \alpha(n,N) \in (0,1)$ satisfying the following.
	
	For any $\sigma\in(0,1)$, there exists  $\ve_0 = \ve_0(n,N,\sigma) \in(0,1)$ such that for any $\ve \in (0,\ve_0)$ we find $\gamma_0=\gamma_0(n,N,\ve,\sigma) \in (0,1)$ such that the following holds.
	
	Assume $f\in L^1(B^n(0,1);\R^N)$, $G\in L^\frac{n}{n-1}(B(0,1);\R^n \otimes \R^N)$, $Q\in W^{1,n}(B(0,1);SO(N))$ and $u \in W^{1,n}(B(0,1);\R^N)$ satisfy the system
	\begin{align*}
		-\di(|\g u|^{n-2} Q\g u) = f + \di G\ \text{in }B(0,1)
	\end{align*}
	and the bound
	\[
	\|\g Q\|_{L^n(B(0,1))} \leq \gamma_0.
	\]
	For any $\tau\in(0,\tau_1)$, there exists $C_1=C_1(n,N,\sigma,\ve,\tau)>0$ and $C_2 = C_2(n,N,\tau)>0$ such that
	\begin{align*}
		\| \g u \|_{L^{(n,\infty)}(B(0,\tau))} &\leq C_1(\sigma,\ve,\tau) \left( \|f\|_{L^1(B(0,1))}^\frac{1}{n-1} + \|G\|_{L^\frac{n}{n-1}(B(0,1))}^\frac{1}{n-1}\right) \\
		&+ C_2(\tau) \|\g u \|_{L^{n-\ve}(B(0,1))} + \Gamma \left( \frac{\sigma }{\tau^\frac{n}{n-1} } + \tau^\alpha \right) \| \g u \|_{L^{(n,\infty)}(B(0,1))}.
	\end{align*}
\end{proposition}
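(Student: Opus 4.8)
The plan is to establish an excess-decay estimate for $\g u$ on arbitrary small balls --- by comparison with the $n$-harmonic replacement --- and then to convert it into the weak-$L^n$ bound by a covering argument built on \Cref{th:Msharp_star}.

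First I would localize. Fix $x\in B(0,1/2)$ and, by a coarea argument, a radius $\rho$ (inside any prescribed scale window) with $B(x,\tfrac43\rho)\subset B(0,1)$ for which $\partial B(x,\rho)$ is a good slice in the sense of \eqref{eq:goodslice}. The scaling $\tilde u(y)=\rho^{-1}u(x+\rho y)$, $\tilde Q(y)=Q(x+\rho y)$, $\tilde f(y)=\rho f(x+\rho y)$, $\tilde G(y)=G(x+\rho y)$ preserves the system, preserves the smallness $\|\g\tilde Q\|_{L^n(B(0,1))}\le\gamma_0$, and turns $\|\g\tilde u\|_{L^{n-\eps}(B(0,1))}$ into a multiple of $(\mvint_{B(x,\rho)}|\g u|^{n-\eps})^{\frac1{n-\eps}}$, while $\|\tilde f\|_{L^1(B(0,1))}^{\frac1{n-1}}=\rho^{-1}\|f\|_{L^1(B(x,\rho))}^{\frac1{n-1}}$ and $\|\tilde G\|_{L^{n/(n-1)}(B(0,1))}^{\frac1{n-1}}=\rho^{-1}\|G\|_{L^{n/(n-1)}(B(x,\rho))}^{\frac1{n-1}}$. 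Feeding this into \Cref{lm:distance_harm}, combining with the interior $C^{1,\alpha}$-decay $\mvint_{B(x,\tau\rho)}|\g v-(\g v)_{B(x,\tau\rho)}|\aleq\tau^\alpha\mvint_{B(x,\rho/2)}|\g v|$ of the $n$-harmonic replacement $v$ (for a universal $\alpha=\alpha(n,N)\in(0,1)$), with $\mvint_{B(x,\rho/2)}|\g v|\aleq\mvint_{B(x,\rho)}|\g v|$, with the good-slice trace/minimality estimate $\mvint_{B(x,\rho)}|\g v|\aleq\big(\mvint_{B(x,\frac43\rho)}|\g u|^{\frac{n^2}{n+1}}\big)^{\frac{n+1}{n^2}}$ (as in \eqref{eq:goodslice}, \eqref{eq:harmonicv}), and with $\mvint_{B(x,\tau\rho)}|\g(u-v)|\le\tau^{-n/(n-\eps)}(\mvint_{B(x,\rho)}|\g(u-v)|^{n-\eps})^{\frac1{n-\eps}}$, one obtains for every $\tau\in(0,\tau_1)$
\[
\mvint_{B(x,\tau\rho)}\!|\g u-(\g u)_{B(x,\tau\rho)}|\ \le\ \Gamma_0\tau^\alpha\Big(\mvint_{B(x,\frac43\rho)}\!|\g u|^{\frac{n^2}{n+1}}\Big)^{\frac{n+1}{n^2}}+\Gamma_0\tfrac{\sigma}{\tau^{n/(n-1)}}\Big(\mvint_{B(x,\frac43\rho)}\!|\g u|^{n-\eps}\Big)^{\frac1{n-\eps}}+\tfrac{C(\sigma,\eps,\tau)}{\rho}\Big(\|f\|_{L^1(B(x,\rho))}^{\frac1{n-1}}+\|G\|_{L^{\frac n{n-1}}(B(x,\rho))}^{\frac1{n-1}}\Big),
\]
with $\Gamma_0=\Gamma_0(n,N)$ (using $\tau^{-n/(n-\eps)}\le\tau^{-n/(n-1)}$).

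Then the covering argument. Fix a cube $Q_0$ with $B(0,\tau)\subset Q_0\subset B(0,1/2)$ and bound, for $x\in B(0,\tau)$, the restricted sharp maximal function $M^\sharp_{Q_0}(\g u)(x)=\sup\{\mvint_Q|\g u-(\g u)_Q|:x\in Q\subset Q_0\}$: subcubes of side $\lesssim\tau$ are controlled by the excess-decay inequality (then $B(x,\tfrac43\rho)\subset B(0,1)$), subcubes of side $\gtrsim\tau$ crudely by $C(\tau)\|\g u\|_{L^{n-\eps}(B(0,1))}$. Taking the supremum over scales produces the Hardy--Littlewood maximal functions $M(|\g u|^{n^2/(n+1)})$, $M(|\g u|^{n-\eps})$ and the fractional maximal potentials $(M_1 f)^{\frac1{n-1}}$, $(M(|G|^{n/(n-1)}))^{\frac1n}$ of the data. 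By Adams' theorem the data potentials belong to $L^{(n,\infty)}$ with $\|(M_1 f)^{\frac1{n-1}}\|_{L^{(n,\infty)}}\aleq\|f\|_{L^1}^{\frac1{n-1}}$ and $\|(M(|G|^{n/(n-1)}))^{\frac1n}\|_{L^{(n,\infty)}}\aleq\|G\|_{L^{n/(n-1)}}^{\frac1{n-1}}$ (universal constants, contributing to $C_1$); and, since $\tfrac{n^2}{n+1}<n$ stays away from $1$, $M$ is bounded on the corresponding weak Lorentz space, so $\big(M(|\g u|^{n^2/(n+1)})\big)^{\frac{n+1}{n^2}}\in L^{(n,\infty)}$ with $\|\cdot\|_{L^{(n,\infty)}(B(0,1))}\aleq\|\g u\|_{L^{(n,\infty)}(B(0,1))}$, again universally. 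Finally, \Cref{th:Msharp_star} with $g=|\g u|$ gives $g^{**}(t)-g^*(t)\le c(M^\sharp_{Q_0}g)^*(t)$; integrating the identity $\frac{d}{dt}g^{**}(t)=-t^{-1}(g^{**}(t)-g^*(t))$ gives $g^{**}(t)\le g^{**}(T)+c\int_t^T s^{-1}(M^\sharp_{Q_0}g)^*(s)\,ds$ for $T=|Q_0|/6$, and combining this with $\|g\|_{L^{(n,\infty)}(Q_0)}\aeq\sup_t t^{\frac1n}g^{**}(t)$, the elementary bound $\sup_{t\le T}t^{\frac1n}\int_t^T s^{-1}h^*(s)\,ds\aleq\|h\|_{L^{(n,\infty)}}$, and $T^{\frac1n-1}\|g\|_{L^1(Q_0)}\aleq C(\tau)\|\g u\|_{L^{n-\eps}(B(0,1))}$, yields the asserted estimate.

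The difficulty is twofold. First, because of the matrix $Q$ no $n$-harmonic approximation lemma is available, so the comparison step must be extracted from \Cref{lm:distance_harm}, which itself rests on Iwaniec stability rather than on a Kuusi--Mingione-type approximation. Second --- and this is the genuinely delicate point --- the decomposition must be organized so that every $\sigma$- and $\eps$-dependent constant gets attached either to the data or to the lower-order term $\|\g u\|_{L^{n-\eps}(B(0,1))}$, leaving the coefficient of $\|\g u\|_{L^{(n,\infty)}(B(0,1))}$ a \emph{universal} multiple of $\tau^\alpha+\sigma\tau^{-n/(n-1)}$; in particular the $n$-harmonic contribution must be measured through the $\eps$-independent sub-exponent $\tfrac{n^2}{n+1}$ (so the relevant maximal operator has an $\eps$-independent norm), the term $M(|\g u|^{n-\eps})$ arising from the comparison error must be handled with care so as not to introduce an $\eps$-blow-up into $\Gamma$, and the choices must be made in the order $\tau\rightsquigarrow\sigma\rightsquigarrow\eps_0\rightsquigarrow\gamma_0$.
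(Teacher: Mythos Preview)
Your strategy is the same as the paper's: obtain an excess-decay estimate on small balls by comparing with the $n$-harmonic replacement via \Cref{lm:distance_harm} and the interior $C^{1,\alpha}$ regularity of $n$-harmonic functions, pass to a pointwise inequality for the sharp maximal function, and recover the weak-$L^n$ norm of $\g u$ through \Cref{th:Msharp_star}.

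The one tactical difference is worth recording. In the paper the $\tau^\alpha$-contribution of $\g v$ is rerouted back to $\g u$ by the triangle inequality, producing $\Gamma\tau^\alpha\big(\mvint_{B(x,r)}|\g u-(\g u)_{B(x,r)}|^{n-\ve}\big)^{\frac1{n-\ve}}$ and hence a self-referential term $\Gamma\tau^\alpha\,M^\sharp_{n-\ve}(\g u)$; this is then reabsorbed by fixing $\tau$ small and using a threshold $\lambda_0$ that separates small scales (where $M^\sharp_{n-\ve}=M^\sharp_{\tau,n-\ve}$) from large scales (where the oscillation is crudely bounded by $C(\tau)\|\g u\|_{L^{n-\ve}}$). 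You instead bound $\mvint|\g v|$ directly by $\big(\mvint|\g u|^{n^2/(n+1)}\big)^{(n+1)/n^2}$ via the good-slice trace and minimality estimate \eqref{eq:harmonicv}; this puts the $\tau^\alpha$-term on a maximal function with the fixed, $\ve$-independent exponent $n^2/(n+1)$, so its $L^{(n,\infty)}$ bound carries a universal constant and no absorption step is needed. This is a clean variant that buys exactly what you say it does --- a genuinely $\ve$-free $\Gamma$ in front of $\tau^\alpha$. Your caution about the remaining $M(|\g u|^{n-\ve})^{1/(n-\ve)}$ term (attached to $\sigma/\tau^{n/(n-1)}$) is well placed: the operator norm of $M$ on $L^{(n/(n-\ve),\infty)}$ degenerates as $\ve\to 0$, so the order of choices $\tau\rightsquigarrow\sigma\rightsquigarrow\ve_0\rightsquigarrow\gamma_0$ and the precise bookkeeping of constants matter; the paper encounters the same issue at the end of its Lemma~5.5 and handles it somewhat implicitly.
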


We first list our main ingredients:
We first record the scaled version of \Cref{lm:distance_harm}.
\begin{corollary}\label{co:initial_estimate_difference}
	Let $\sigma\in(0,1)$ and consider $\ve_1=\ve_1(n,N,\sigma)\in(0,1)$ be given by \Cref{lm:distance_harm}. Let $\ve \in (0,\ve_1)$ and $\gamma_1 = \gamma_1(n,N,\ve,\sigma)>0$ also given by \Cref{lm:distance_harm}. There exists $C_0=C_0(n,N,\sigma,\eps)>0$ such that the following hold.
	
	Assume that $u \in W^{1,n}(B(x,r);\R^N)$ satisfies
	\[
	\div(Q|\nabla u|^{n-2} \nabla u) = f +\di G\quad \text{in $B(x,r)$}.
	\]
	where $Q \in W^{1,n}(B(x,r);SO(N))$ and
	\begin{align}\label{eq:assumption_Q}
		\|\nabla Q\|_{L^n(B(x,r))} \leq \gamma_1.
	\end{align}
	
	There exists a radius $\rho\in\left[ \frac{1}{2}r,\frac{3}{4}r \right]$ such that if $v\in W^{1,n}(B(x,\rho);\R^N)$ satisfies
	\begin{align*}
		\left\{ \begin{array}{c l}
			\lap_n v = 0 & \text{in }B(x,\rho),\\
			v = u & \text{on }\dr B(x,\rho),
		\end{array}
		\right.
	\end{align*}
	then it holds
	\begin{align}\label{initial_estimate_difference}
		\left( \mvint_{B(x,\rho)} |\g u - \g v|^{n-\ve} \right)^\frac{1}{n-\ve} &\leq C_0(\sigma,\eps)\left( \frac{1}{r^{n-1}} \int_{B(x,r)} |f| \right)^\frac{1}{n-1}  + C_0(\sigma,\eps)\left(\mvint_{B(x,r)} |G|^\frac{n}{n-1}\right)^\frac{1}{n}\\
		&+ \sigma \left( \mvint_{B(x,r)} |\g u|^{n-\ve} \right)^\frac{1}{n-\ve}.
	\end{align}
	
\end{corollary}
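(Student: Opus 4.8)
\emph{Plan.} This is the scale-invariant, mean-value restatement of \Cref{lm:distance_harm}, the one new ingredient being that the radius $\rho$ must be \emph{produced} rather than assumed. I would proceed in four steps: (i) select $\rho\in[\tfrac12 r,\tfrac34 r]$ so that $\partial B(x,\rho)$ becomes a good slice after rescaling; (ii) rescale $B(x,\rho)$ to the unit ball, checking that the hypotheses of \Cref{lm:distance_harm} pass to the rescaled data; (iii) apply \Cref{lm:distance_harm}; (iv) scale back and rewrite all norms as normalized averages. Let $\ve_1,\gamma_1,C(\sigma,\ve)$ be the objects from \Cref{lm:distance_harm}. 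Since that estimate is invariant under the scaling $u\mapsto\lambda^{-1}u(x+\lambda\cdot)$, $Q\mapsto Q(x+\lambda\cdot)$, $f\mapsto\lambda f(x+\lambda\cdot)$, $G\mapsto G(x+\lambda\cdot)$ of the system — $\|\g u\|_{L^{n-\ve}}$ and $\|\g Q\|_{L^n}$ being scale invariant, while $\|f\|_{L^1}^{1/(n-1)}$ and $\|G\|_{L^{n/(n-1)}}^{1/(n-1)}$ carry the same $\lambda^{-1}$ factor as the averaged left-hand side — these quantities may be reused after a change of variables; we will feed the parameter $\sigma/c(n,N)$ into \Cref{lm:distance_harm} to absorb the dimensional constants of the rescaling, which only alters the dependence of $\ve_1,\gamma_1,C_0$ on $\sigma$.

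\emph{Step (i): the good slice.} Since $\g u\in L^n(B(x,r))\subset L^{\frac{n^2}{n+1}}(B(x,r))$, the coarea formula gives $\int_{9r/16}^{5r/8}\|\g u\|^{n^2/(n+1)}_{L^{n^2/(n+1)}(\partial B(x,\rho))}\dif\rho=\|\g u\|^{n^2/(n+1)}_{L^{n^2/(n+1)}(B(x,5r/8)\sm B(x,9r/16))}$, so averaging over the interval $[9r/16,5r/8]\subset[\tfrac12 r,\tfrac34 r]$ produces $\rho$ in that interval such that $\|\g u\|_{L^{n^2/(n+1)}(\partial B(x,\rho))}$ is bounded, up to a fixed dimension-dependent power of $r$, by $\|\g u\|_{L^{n^2/(n+1)}(B(x,\frac43\rho))}$ (note $B(x,\tfrac58 r)\subseteq B(x,\tfrac43\rho)$ since $\tfrac43\rho\ge\tfrac34 r$). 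Because $\tfrac{n^2}{n+1}>n-1$, the subcritical Sobolev embedding $W^{1,\frac{n^2}{n+1}}(\partial B(x,\rho))\hookrightarrow W^{1-\frac1n,n}(\partial B(x,\rho))$ holds on the $(n-1)$-sphere (in seminorm form, after subtracting the spherical mean and using Poincaré), so for this $\rho$ one gets $[u]_{W^{1-\frac1n,n}(\partial B(x,\rho))}\aleq r^{\beta}\|\g u\|_{L^{\frac{n^2}{n+1}}(B(x,\frac43\rho))}$ with a dimensional exponent $\beta$. This is precisely the information \Cref{lm:distance_harm} extracts from its good-slice hypothesis \eqref{eq:goodslice}: the hypothesis enters there only to bound the $n$-harmonic replacement, through $\|\g v\|_{L^n(B(1))}\aleq[u]_{W^{1-\frac1n,n}(\partial B(1))}\aleq\|\g u\|_{L^{\frac{n^2}{n+1}}(B(4/3))}\aleq\|\g u\|_{L^{n-\ve}(B(4/3))}$ (trace theorem and $n$-harmonic minimality). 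Thus, after rescaling, \eqref{eq:goodslice} is available. I expect this selection — and, more precisely, checking that the homogeneities line up so that no stray power of $r$ survives the rescaling — to be the main technical obstacle; it is the familiar "good slice" construction of the harmonic-map literature, but the bookkeeping is delicate.

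\emph{Steps (ii)--(iii): rescaling.} With the good $\rho$ fixed, set $\bar u(y):=\rho^{-1}\big(u(x+\rho y)-(u)_{B(x,\rho)}\big)$, $\bar Q(y):=Q(x+\rho y)$, $\bar f(y):=\rho f(x+\rho y)$, $\bar G(y):=G(x+\rho y)$ for $y\in B(0,\tfrac43)$, which is licit since $B(x,\tfrac43\rho)\subseteq B(x,r)$. A direct computation shows $\bar u$ solves $\di(\bar Q|\g\bar u|^{n-2}\g\bar u)=\bar f+\di\bar G$ in $B(0,1)$; moreover $\|\g\bar Q\|_{L^n(B(0,\frac43))}=\|\g Q\|_{L^n(B(x,\frac43\rho))}\le\|\g Q\|_{L^n(B(x,r))}\le\gamma_1$ by \eqref{eq:assumption_Q} and scale invariance of the $L^n$-norm of the gradient, and $\partial B(0,1)$ is a good slice for $\bar u$ by Step (i). Letting $\bar v$ be the $n$-harmonic replacement of $\bar u$ on $B(0,1)$ — which by uniqueness of the $n$-harmonic Dirichlet problem is exactly the rescaling of the $v$ in the statement — \Cref{lm:distance_harm}, applied with $\sigma/c(n,N)$, gives
\[
\|\g\bar u-\g\bar v\|_{L^{n-\ve}(B(0,1))}\aleq C(\sigma,\ve)\Big(\|\bar f\|_{L^1(B(0,1))}^{\frac1{n-1}}+\|\bar G\|_{L^{\frac{n}{n-1}}(B(0,1))}^{\frac1{n-1}}\Big)+\tfrac{\sigma}{c(n,N)}\|\g\bar u\|_{L^{n-\ve}(B(0,\frac43))}.
\]

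\emph{Step (iv): conclusion.} Undoing the change of variables and using $\rho\in[\tfrac12 r,\tfrac34 r]$ to trade $\rho$-balls for $r$-balls at the cost of dimensional constants — namely $\|\g\bar u-\g\bar v\|_{L^{n-\ve}(B(0,1))}\aeq\big(\mvint_{B(x,\rho)}|\g u-\g v|^{n-\ve}\big)^{1/(n-\ve)}$, $\|\bar f\|_{L^1(B(0,1))}^{1/(n-1)}\aleq\big(r^{1-n}\int_{B(x,r)}|f|\big)^{1/(n-1)}$, $\|\bar G\|_{L^{n/(n-1)}(B(0,1))}^{1/(n-1)}\aleq\big(\mvint_{B(x,r)}|G|^{n/(n-1)}\big)^{1/n}$, and $\|\g\bar u\|_{L^{n-\ve}(B(0,\frac43))}\aleq\big(\mvint_{B(x,r)}|\g u|^{n-\ve}\big)^{1/(n-\ve)}$ (the last using $B(x,\tfrac43\rho)\subseteq B(x,r)$) — one arrives at \eqref{initial_estimate_difference} with $C_0:=c(n,N)\,C(\sigma,\ve)$. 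Everything here is routine bookkeeping; the only genuinely nontrivial point remains the good-slice selection of Step (i).
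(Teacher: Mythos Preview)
Your proposal is correct and matches the paper's intent: the paper gives no proof at all, simply introducing the corollary as ``the scaled version of \Cref{lm:distance_harm},'' and your four steps (good-slice selection via Fubini, rescaling, applying \Cref{lm:distance_harm}, scaling back) are precisely what this entails. The one point you flag as delicate --- producing $\rho\in[\tfrac12 r,\tfrac34 r]$ with the good-slice property --- is indeed the only content beyond bookkeeping, and your coarea/averaging argument combined with the subcritical embedding $W^{1,\frac{n^2}{n+1}}(\partial B)\hookrightarrow W^{1-\frac1n,n}(\partial B)$ on the $(n-1)$-sphere handles it correctly.
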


The following regularity result for $n$-harmonic maps into $\R^N$ was proven in \cite[Theorem 3.2]{KM18}.
\begin{lemma}\label{KM:theorem32}
	There exists $\tau_0=\tau_0(n,N)\in(0,\frac{1}{2})$, $c_{hol}=c_{hol}(n,N)>0$ and $\alpha=\alpha(n,N) \in (0,1)$ such that the following holds. Assume $v\in W^{1,n}(B(x,\rho))$ satisfies
	\[
	\lap_n v = 0 \quad \text{in $B(x,\rho)$}
	\]
	Then for any $\tau \in(0,\tau_0)$ and any ball $B(y,s)\subset B(x,\rho)$ we have
	\begin{align}\label{oscillation_gv}
		\osc_{B(y,\tau s)} (\g v) &\leq c_{hol} \tau^\alpha \mvint_{B(y,s)} |\g v - (\g v)_{B(y,s)}|.
	\end{align}
\end{lemma}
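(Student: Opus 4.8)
The plan is to derive \eqref{oscillation_gv} from the interior $C^{1,\alpha}$-regularity of the $n$-harmonic system, via the classical degenerate/non-degenerate alternative of $p$-Laplace theory. The system $\lap_n v=0$ is invariant under adding constants to $v$ and under the rescaling $v\mapsto\lambda^{-1}v(y+s\,\cdot\,)$, and under this rescaling both sides of \eqref{oscillation_gv} are homogeneous of the same degree; so it is enough to treat $B(y,s)=B(0,1)$, and, dividing $v$ by $\mvint_{B(0,1)}|\g v-(\g v)_{B(0,1)}|$, to assume
\[
\mvint_{B(0,1)}|\g v-(\g v)_{B(0,1)}|=1.
\]
Write $\xi_0:=(\g v)_{B(0,1)}$ and $\Lambda:=|\xi_0|$, so that $\mvint_{B(0,1)}|\g v|\le\Lambda+1$. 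It remains to produce $\tau_0,\alpha\in(0,1)$ and $c_{hol}>0$, depending only on $n,N$, with $\osc_{B(0,\tau)}\g v\le c_{hol}\,\tau^{\alpha}$ for every $\tau<\tau_0$.

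\textbf{Inputs and the degenerate regime.} I would use two scale-invariant facts from the $C^{1,\alpha}$-theory of the $n$-harmonic system, with exponent $\alpha=\alpha(n,N)\in(0,1)$ and constants depending only on $n,N$: the interior gradient bound $\sup_{B(0,3/4)}|\g v|\aleq(\mvint_{B(0,1)}|\g v|^{n})^{1/n}\aleq\mvint_{B(0,1)}|\g v|$ (the last step being a routine absorption argument), and the gradient Hölder estimate $[\g v]_{C^{0,\alpha}(B(0,1/2))}\aleq\sup_{B(0,3/4)}|\g v|$. For $N=1$ these go back to Ural'ceva, DiBenedetto, Tolksdorf and Lewis; for the vectorial $n$-harmonic map system they are Uhlenbeck's regularity theorem. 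They already settle the \emph{degenerate regime} $\Lambda\le M$, where $M=M(n,N)$ is a large constant to be fixed below: there $\mvint_{B(0,1)}|\g v|\le M+1$, hence $[\g v]_{C^{0,\alpha}(B(0,1/2))}\aleq M+1$, and so for $\tau<\tfrac14$
\[
\osc_{B(0,\tau)}\g v\ \le\ (2\tau)^{\alpha}\,[\g v]_{C^{0,\alpha}(B(0,1/2))}\ \aleq\ \tau^{\alpha},
\]
which is the claim, by the normalisation.

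\textbf{The non-degenerate regime $\Lambda>M$.} Here I would run the standard iteration over dyadic scales $r_k=4^{-k}$. At a scale $r$ on which $E_r:=\mvint_{B(0,r)}|\g v-(\g v)_{B(0,r)}|$ is small compared with $|(\g v)_{B(0,r)}|$, the field $\xi\mapsto|\xi|^{n-2}\xi$ is uniformly monotone and smooth on the range swept out by $\g v$ over $B(0,r)$ — using the interior gradient bound to control $\sup_{B(0,r/2)}|\g v|$ from above, and non-degeneracy to keep it away from $0$. One freezes the coefficient matrix $A_0:=D_\xi(|\xi|^{n-2}\xi)\big|_{\xi=(\g v)_{B(0,r)}}$, a constant-coefficient system obeying the Legendre--Hadamard condition with ellipticity ratio $\le n-1$ once the scalar factor $|(\g v)_{B(0,r)}|^{n-2}$ is normalised out, and compares $v$ on $B(0,r)$ with the solution $h$ of $\div(A_0\g h)=0$, $h=v$ on $\partial B(0,r)$. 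Campanato's excess decay for such systems gives $\mvint_{B(0,\theta r)}|\g h-(\g h)_{B(0,\theta r)}|\aleq\theta\,\mvint_{B(0,r)}|\g h-(\g h)_{B(0,r)}|$ for any fixed $\theta<1$, while a Caccioppoli-type comparison estimate, exploiting the strict convexity of $\xi\mapsto|\xi|^{n}$ and the quantitative closeness of $\g v$ to $(\g v)_{B(0,r)}$, bounds $\mvint_{B(0,r)}|\g v-\g h|$ by a small multiple of $E_r$. Combining the two yields one step of geometric decay $E_{\theta r}\le\tfrac12\theta^{\alpha}E_r$ with $\theta=\theta(n,N)$ fixed. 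One iterates this as long as the non-degeneracy persists; at the first scale $r_\ast$ where it fails, $E_{r_\ast}$ is comparable to $|(\g v)_{B(0,r_\ast)}|$, so rescaling $B(0,r_\ast)$ to $B(0,1)$ lands in the degenerate regime already treated. Summing the geometric decay over the non-degenerate scales and appending the single degenerate estimate produces $\osc_{B(0,\tau)}\g v\aleq\tau^{\alpha}$ for all $\tau<\tau_0=\tau_0(n,N)$.

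\textbf{Main obstacle.} The delicate point is the Caccioppoli-type comparison estimate in a non-degenerate step: $\mvint_{B(0,r)}|\g v-\g h|$ must be controlled by $(\text{small})\cdot E_r$ \emph{uniformly} in the possibly very large value $|(\g v)_{B(0,r)}|$, which forces one to track the monotonicity modulus of $\xi\mapsto|\xi|^{n-2}\xi$ on the precise annulus where $\g v$ lives and to use that $\g v$ stays non-degenerate throughout $B(0,r)$ — the latter itself resting on the interior gradient bound. In the vectorial case this is also where one genuinely needs Uhlenbeck's $C^{1,\alpha}$-theorem rather than De Giorgi--Nash, since the linearised system carries no maximum principle. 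Alternatively one may simply invoke \cite[Theorem~3.2]{KM18}, whose proof implements exactly this scheme.
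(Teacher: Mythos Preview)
The paper does not give its own proof of this lemma; it simply quotes \cite[Theorem~3.2]{KM18}. Your sketch outlines the standard degenerate/non-degenerate alternative built on Uhlenbeck's $C^{1,\alpha}$-theory for the vectorial $n$-harmonic system, which is exactly the scheme behind the cited Kuusi--Mingione result --- as you yourself acknowledge in your final sentence. So your approach and the paper's (by reference) coincide.
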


Using this Hölder-continuity estimate, we can exchange in \eqref{initial_estimate_difference} the $n$-harmonic map $\g v$  with the average $(\g u)_{B(x,\tau r)}$. Namely we have,

\begin{lemma}\label{rewrite_initial_estimate}
	There exists $\tau_0=\tau_0(n,N)\in(0,\frac{1}{2})$, $\Gamma=\Gamma(n,N)>1$ such that the following holds.
	
	Fix $\sigma \in (0,1)$, take $\eps_1$ from \Cref{co:initial_estimate_difference}. For $\eps \in (0,\eps_1)$ take $\gamma_1$ also from \Cref{co:initial_estimate_difference} and assume \eqref{eq:assumption_Q}.
	
	For any $\tau\in(0,\tau_0)$ and $\ve\in(0,\ve_1)$, there exists $C_1=C_1(n,N,\sigma,\tau,\eps)>1$
	such that for any ball $B(x,r)\subset B(0,1)$ the following holds when $u,Q,f,G$ are as in \Cref{Lninfty_estimate}:
	\begin{align*}
		\left( \mvint_{B(x,\tau r)} |\g u - (\g u)_{B(x,\sigma r)}|^{n-\ve} \right)^\frac{1}{n-\ve} &\leq C_1(\sigma,\tau,\eps) \left( \frac{1}{r^{n-1}} \int_{B(x,r)} |f| \right)^\frac{1}{n-1} + C_1(\sigma,\tau,\eps) \left(\mvint_{B(x,r)} |G|^\frac{n}{n-1}\right)^\frac{1}{n}\\
		& +\Gamma \left( \frac{\sigma}{\tau^\frac{n}{n-1}} + \tau^\alpha \sigma \right)\left( \mvint_{B(x,r)} |\g u|^{n-\ve} \right)^\frac{1}{n-\ve} \\
		&+  \Gamma \tau^\alpha \left( \mvint_{B(x,r)} |\g u - (\g u)_{B(x,r)}|^{n-\ve} \right)^\frac{1}{n-\ve}\\
	\end{align*}
\end{lemma}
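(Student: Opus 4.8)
The plan, in this order, is: replace $u$ on a good sub-ball by the $n$-harmonic comparison map $v$ furnished by \Cref{co:initial_estimate_difference}; use the $C^{1,\alpha}$-estimate \Cref{KM:theorem32} to see that on a ball shrunk by $\tau$ the gradient $\g v$ is close to a constant; and finally trade this constant back for $(\g u)_{B(x,\tau r)}$, collecting the error terms. Fix $B(x,r)\subset B(0,1)$ and apply \Cref{co:initial_estimate_difference}: it produces a radius $\rho\in[\tfrac12 r,\tfrac34 r]$ and the $n$-harmonic map $v$ on $B(x,\rho)$, $v=u$ on $\dr B(x,\rho)$, obeying \eqref{initial_estimate_difference}. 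Let $\alpha$ and $\tau_0^{\mathrm{KM}}$ be the constants of \Cref{KM:theorem32} and set $\tau_0:=\tfrac12\tau_0^{\mathrm{KM}}\in(0,\tfrac12)$; then $B(x,\tau r)\subset B(x,\rho)\subset B(x,r)$ for all $\tau\in(0,\tau_0)$. Two elementary facts will be used throughout, with exponent $p=n-\ve\ge 1$: ball-averages are near-minimizers, $(\mvint_B|g-(g)_B|^{p})^{1/p}\le 2(\mvint_B|g-c|^{p})^{1/p}$ for any constant $c$; and the volume comparisons $|B(x,\rho)|\le\tau^{-n}|B(x,\tau r)|$ (since $\rho<r$) and $|B(x,r)|\le 2^n|B(x,\rho)|$ (since $\rho\ge\tfrac12 r$).

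By the triangle inequality and near-minimality the left-hand side is $\aleq A+B$, where $A:=(\mvint_{B(x,\tau r)}|\g u-\g v|^{n-\ve})^{1/(n-\ve)}$ and $B:=(\mvint_{B(x,\tau r)}|\g v-(\g v)_{B(x,\tau r)}|^{n-\ve})^{1/(n-\ve)}$. For $A$, enlarging the domain to $B(x,\rho)$ costs the factor $\tau^{-n/(n-\ve)}$ by the first volume comparison, after which \eqref{initial_estimate_difference} gives
\[
A\ \aleq\ \tau^{-\frac{n}{n-\ve}}C_0(\sigma,\ve)\,\mathcal D(x,r)\ +\ \tau^{-\frac{n}{n-\ve}}\sigma\brac{\mvint_{B(x,r)}|\g u|^{n-\ve}}^{\frac1{n-\ve}},
\]
where $\mathcal D(x,r):=\big(\tfrac{1}{r^{n-1}}\int_{B(x,r)}|f|\big)^{1/(n-1)}+\big(\mvint_{B(x,r)}|G|^{n/(n-1)}\big)^{1/n}$. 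The first summand has the form $C_1(\sigma,\tau,\ve)\,\mathcal D(x,r)$; for the second, $\ve<1$ forces $\tfrac{n}{n-\ve}\le\tfrac{n}{n-1}$, hence $\tau^{-n/(n-\ve)}\le\tau^{-n/(n-1)}$ as $\tau<1$, and the second summand is $\aleq\tfrac{\sigma}{\tau^{n/(n-1)}}(\mvint_{B(x,r)}|\g u|^{n-\ve})^{1/(n-\ve)}$ -- the first part of the coefficient of $\|\g u\|_{L^{n-\ve}(B(x,r))}$ in the statement.

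For $B$, observe that $|\g v-(\g v)_{B(x,\tau r)}|\le\osc_{B(x,\tau r)}(\g v)$ pointwise, so $B\le\osc_{B(x,\tau r)}(\g v)$; writing $\tau r=\tau'\rho$ with $\tau'=\tau r/\rho\in[\tfrac43\tau,2\tau]\subset(0,\tau_0^{\mathrm{KM}})$, \Cref{KM:theorem32} applied on $B(x,\rho)$, followed by Jensen's inequality and one near-minimality step replacing $(\g v)_{B(x,\rho)}$ by $(\g u)_{B(x,\rho)}$, gives $B\aleq\tau^\alpha(\mvint_{B(x,\rho)}|\g v-(\g u)_{B(x,\rho)}|^{n-\ve})^{1/(n-\ve)}$. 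Splitting $\g v-(\g u)_{B(x,\rho)}=(\g v-\g u)+(\g u-(\g u)_{B(x,\rho)})$: the first summand is controlled by \eqref{initial_estimate_difference}, contributing $\tau^\alpha C_0(\sigma,\ve)\,\mathcal D(x,r)$ (absorbed into $C_1$) and $\Gamma\tau^\alpha\sigma\,\|\g u\|_{L^{n-\ve}(B(x,r))}$ (the second part of the coefficient of $\|\g u\|_{L^{n-\ve}(B(x,r))}$); the second summand, after the bound $|B(x,r)|\le 2^n|B(x,\rho)|$ and replacing $(\g u)_{B(x,\rho)}$ by $(\g u)_{B(x,r)}$ by near-minimality once more, is $\aleq\tau^\alpha(\mvint_{B(x,r)}|\g u-(\g u)_{B(x,r)}|^{n-\ve})^{1/(n-\ve)}$, i.e.\ the last term $\Gamma\tau^\alpha(\cdots)$ of the statement. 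Adding the bounds for $A$ and $B$ proves the lemma: every constant standing in front of the two ``non-small'' quantities $\|\g u\|_{L^{n-\ve}(B(x,r))}$ and the excess is a product of the universal numbers $2$, $c_{hol}$, $2^\alpha$, $2^{n/(n-1)}$, hence depends only on $n,N$ and defines $\Gamma$, while everything carrying $C_0(\sigma,\ve)$ or a negative power of $\tau$ is absorbed into $C_1=C_1(n,N,\sigma,\tau,\ve)>1$.

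The analysis is soft; the one delicate point is bookkeeping, namely that $\Gamma$ must be independent of $\sigma,\ve,\tau$. This is why one keeps the gain factor $\sigma$ of \eqref{initial_estimate_difference} separated from the (large, $\ve$-dependent) constant $C_0$, and why, in the term $A$, one rewrites the volume exponent $\tfrac{n}{n-\ve}$ as $\tfrac{n}{n-1}$ using $\ve<1$, so that precisely the advertised $\tau^{-n/(n-1)}$ appears. The remaining care is geometric -- arranging the three radii $\tau r\le\rho\le\tfrac34 r$ so that \Cref{KM:theorem32}, which only controls balls inside $B(x,\rho)$, can still be invoked at scale $\tau r$.
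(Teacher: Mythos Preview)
Your proof is correct and follows essentially the same approach as the paper's: split via the $n$-harmonic comparison $v$ on the good ball $B(x,\rho)$, use \Cref{co:initial_estimate_difference} for the difference $\g u-\g v$, and use \Cref{KM:theorem32} to control the oscillation of $\g v$. The only cosmetic difference is that the paper applies \Cref{KM:theorem32} on $B(x,r/2)\subset B(x,\rho)$ while you apply it directly on $B(x,\rho)$ with the rescaled parameter $\tau'=\tau r/\rho$; both are fine, and your bookkeeping of which constants must stay universal (to define $\Gamma$) versus which may depend on $\sigma,\tau,\ve$ (absorbed into $C_1$) is clean.
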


\begin{proof}
	We take $\tau_0$ to be $\frac{1}{100}$ of the $\tau_0$ from \Cref{KM:theorem32}.
	
	Pick $\rho \in (\frac{r}{2},\frac{3}{4}r)$ from \Cref{co:initial_estimate_difference} and let $v\in W^{1,n}(B(x,\rho);\R^N)$ satisfy
	\begin{align*}
		\left\{ \begin{array}{c l}
			\lap_n v = 0 & \text{in }B(x,\rho),\\
			v = u & \text{on }\dr B(x,\rho),
		\end{array}
		\right.
	\end{align*}
	
	By triangle inequality
	\begin{align*}
		\left( \mvint_{B(x,\tau r)} |\g u - (\g u)_{B(x,\tau r)}|^{n-\ve} \right)^\frac{1}{n-\ve} &\leq 2\left( \mvint_{B(x,\tau r)} |\g u - \g v|^{n-\ve}  \right)^\frac{1}{n-\ve} + \osc_{B(x,\tau r)} (\g v).
	\end{align*}
	
	For $\tau < \tau_0$ we use \eqref{oscillation_gv} we obtain
	\begin{align*}
		\left( \mvint_{B(x,\tau r)} |\g u - (\g u)_{B(x, \tau r)}|^{n-\ve} \right)^\frac{1}{n-\ve} &\leq  2\left(\frac{1}{2^n \tau^n} \mvint_{B(x,r/2)} |\g u - \g v|^{n-\ve}  \right)^\frac{1}{n-\ve}  \\
		&+ c_{hol}\tau^\alpha \left( \mvint_{B(x,r/2)} |\g v - (\g v)_{B(x,r/2)}|^{n-\ve} \right)^\frac{1}{n-\ve}\\
		&\aleq_n  \left(\frac{1}{\tau^n} \mvint_{B(x,\rho)} |\g u - \g v|^{n-\ve}  \right)^\frac{1}{n-\ve}  \\
		&+ c_{hol}\tau^\alpha \left( \mvint_{B(x,\rho)} |\g v - (\g v)_{B(x,\rho)}|^{n-\ve} \right)^\frac{1}{n-\ve}.
	\end{align*}
	We estimate the first term thanks to \eqref{initial_estimate_difference} and the second to last term by the triangle inequality
	\begin{align*}
		\left( \mvint_{B(x, \tau r)} |\g u - (\g u)_{B(x, \tau r)}|^{n-\ve} \right)^\frac{1}{n-\ve} &\aleq_n \frac{C_0(\sigma,\eps)}{\tau^\frac{n}{n-\ve}}\left( \frac{1}{r^{n-1}} \int_{B(x,r)} |f| \right)^\frac{1}{n-1} + \frac{C_0(\sigma,\eps)}{\tau^\frac{n}{n-\eps}} \left( \mvint_{B(x,r)} |G|^\frac{n}{n-1}\right)^\frac{1}{n} \\
		&+ \frac{\sigma}{\tau^\frac{n}{n-\ve}} \left( \mvint_{B(x,r)} |\g u|^{n-\ve} \right)^\frac{1}{n-\ve} \\
		&+ 2c_{hol} \tau^\alpha \left( \mvint_{B(x,\rho)} |\g u - \g v|^{n-\ve} \right)^\frac{1}{n-\ve} + c_{hol} \tau^\alpha \left( \mvint_{B(x,\rho)} |\g u - (\g u)_{B(x,\rho)}|^{n-\ve} \right)^\frac{1}{n-\ve}.
	\end{align*}
	Using again \eqref{initial_estimate_difference} for the last term above
	\begin{align*}
		\left( \mvint_{B(x,\tau r)} |\g u - (\g u)_{B(x,\tau r)}|^{n-\ve} \right)^\frac{1}{n-\ve} &\aleq_n \left( \frac{C_0(\sigma,\eps)}{\tau^\frac{n}{n-\ve}}+ \tau^\alpha c_0 c_{hol} \right)\left( \frac{1}{r^{n-1}} \int_{B(x,r)} |f| \right)^\frac{1}{n-1} \\
		&+  \left( \frac{C_0(\sigma,\eps)}{\tau^\frac{n}{n-\ve}}+ \tau^\alpha c_0 c_{hol} \right) \left( \mvint_{B(x,r)} |G|^\frac{n}{n-1}\right)^\frac{1}{n}\\
		&+ \left( \frac{2\sigma}{\tau^\frac{n}{n-\ve}} + 2\tau^\alpha c_{hol} \sigma \right) \left( \mvint_{B(x,r)} |\g u|^{n-\ve} \right)^\frac{1}{n-\ve} \\
		&+  c_{hol} \tau^\alpha \left( \mvint_{B(x,r)} |\g u - (\g u)_{B(x,r)}|^{n-\ve} \right)^\frac{1}{n-\ve}.
	\end{align*}
	We conclude by observing that since $\tau\in(0,1)$ and $\ve\in(0,1)$, we have the inequality $\tau^{-\frac{n}{n-\ve}} \leq \tau^{-\frac{n}{n-1}}$.
\end{proof}

In order to obtain $L^{(n,\infty)}$-estimates, we will rewrite the estimate of \Cref{rewrite_initial_estimate} in the form of maximal functions: For a given $x\in B(0,1)$ we take the supremum over $r>0$ in the estimates and then consider the result as an estimate between maximal functions.

We consider the following maximal functions: given $p,q\in[1,\infty)$, $\tau\in(0,1)$ and a function $g\in L^1(B(0,1))$, we set
\begin{align*}
	M^\sharp_{\tau,p} g(x) &= \sup\left\{ \left( \mvint_{B(x,r)} |g - (g)_{B(x,r)}|^p \right)^\frac{1}{p} : B(x,r/\tau) \subset B(0,1) \right\},\\
	M^\sharp_{p} g(x) &= \sup\left\{ \left( \mvint_{B(x,r)} |g - (g)_{B(x,r)}|^{p} \right)^\frac{1}{p} : B(x,r) \subset B(0,1) \right\},\\
	M_q g(x) &= \sup\left\{ \left( \frac{1}{ r^{n-q} }\int_{B(x,r)} |g|^q \right)^\frac{1}{q} : B(x,r) \subset B(0,1) \right\},\\
	M g(x) &= \sup\left\{ \mvint_{B(x,r)} |g| : B(x,r) \subset B(0,1) \right\}.
\end{align*}

\begin{lemma}\label{estimate_norm_MsGu}
	There are universal constants $\tau_1 = \tau_1(n,N) \in(0, \tau_0)$ and $\Gamma = \Gamma(n,N)>1$ satisfying the following. Let $\sigma\in(0,1)$ and consider $\ve_1 = \ve_1(n,N,\sigma)$ given by \Cref{co:initial_estimate_difference}. Let $\ve \in(0,\ve_1)$ and assume \eqref{eq:assumption_Q}. There exists $c_1=c_1(n,N,\sigma,\tau,\ve)>0$ and $c_2 = c_2(n,N,\tau)>0$ such that
	\begin{align*}
		\left\| M^\sharp_{\tau,n-\ve}(\g u) \right\|_{L^{(n,\infty)}\left( B(0,\frac{1}{2}) \right)} &\leq c_1(\sigma,\tau,\ve) \left( \|f\|_{L^1(B(0,1))}^\frac{1}{n-1} + \|G\|_{L^\frac{n}{n-1}(B(0,1))}^\frac{1}{n-1}\right)\\
		& +c_2(\tau) \|\g u \|_{L^{n-\ve}(B(0,1))} + \Gamma \left( \frac{\sigma}{\tau^\frac{n}{n-1}} + \tau^\alpha \sigma \right) \| \g u \|_{L^{(n,\infty)}(B(0,1))}.
	\end{align*}
\end{lemma}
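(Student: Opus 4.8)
The plan is to turn the scale-invariant ball estimate of \Cref{rewrite_initial_estimate} into a pointwise inequality between maximal functions and then integrate it in $L^{(n,\infty)}(B(0,\tfrac12))$, using the weak-type mapping properties of the maximal operators on Lorentz spaces. \textbf{Step 1.} Fix $y\in B(0,\tfrac12)$ and a radius $\rho$ with $B(y,\rho/\tau)\subset B(0,1)$; set $r:=\rho/\tau$, so $B(y,r)\subset B(0,1)$ and $\tau r=\rho$. Applying \Cref{rewrite_initial_estimate} with center $y$ and radius $r$ (its left-hand side is then precisely the average on $B(y,\rho)$ entering $M^\sharp_{\tau,n-\eps}(\g u)(y)$), splitting on the right $(\mvint_{B(y,r)}|\g u|^{n-\eps})^{1/(n-\eps)}\le(\mvint_{B(y,r)}|\g u-(\g u)_{B(y,r)}|^{n-\eps})^{1/(n-\eps)}+|(\g u)_{B(y,r)}|$, and dominating each average by the corresponding maximal function at $y$ (legitimate since $B(y,r)\subset B(0,1)$), then taking the supremum over admissible $\rho$, yields for $y\in B(0,\tfrac12)$
\[
M^\sharp_{\tau,n-\eps}(\g u)(y)\le C_1(M_1f(y))^{\frac1{n-1}}+C_1\big(M(|G|^{\frac n{n-1}})(y)\big)^{\frac1n}+\Gamma\Big(\tfrac{\sigma}{\tau^{n/(n-1)}}+\tau^\alpha\sigma\Big)M(|\g u|)(y)+\Big(2\Gamma\tau^\alpha+\Gamma\tfrac{\sigma}{\tau^{n/(n-1)}}\Big)M^\sharp_{n-\eps}(\g u)(y),
\]
with $C_1=C_1(n,N,\sigma,\tau,\eps)$ and $\Gamma=\Gamma(n,N)$ from \Cref{rewrite_initial_estimate}, and $M$ the Hardy--Littlewood maximal operator over balls in $B(0,1)$.

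\textbf{Step 2.} Since $y\in B(0,\tfrac12)$, the radii $r$ in $M^\sharp_{n-\eps}(\g u)(y)$ with $B(y,r/\tau)\subset B(0,1)$ contribute at most $M^\sharp_{\tau,n-\eps}(\g u)(y)$, and the remaining ones satisfy $r\gtrsim\tau$, so on them $(\mvint_{B(y,r)}|\g u-(\g u)_{B(y,r)}|^{n-\eps})^{1/(n-\eps)}\le 2(\mvint_{B(y,r)}|\g u|^{n-\eps})^{1/(n-\eps)}\le c(\tau)\|\g u\|_{L^{n-\eps}(B(0,1))}$; hence $M^\sharp_{n-\eps}(\g u)(y)\le M^\sharp_{\tau,n-\eps}(\g u)(y)+c(\tau)\|\g u\|_{L^{n-\eps}(B(0,1))}$ on $B(0,\tfrac12)$. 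Inserting this into Step 1 and shrinking the \emph{universal} $\tau_1$ so that $2\Gamma\tau^\alpha<\tfrac12$, the part of the $M^\sharp_{n-\eps}$-term with coefficient $2\Gamma\tau^\alpha$ is absorbed into the left (its $\|\g u\|_{L^{n-\eps}}$-error going into a $\tau$-dependent constant), while the part with coefficient $\Gamma\tfrac{\sigma}{\tau^{n/(n-1)}}$ is kept.

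\textbf{Step 3 and the main obstacle.} Take $L^{(n,\infty)}(B(0,\tfrac12))$-norms and use $\|h^\theta\|_{L^{(p,\infty)}}=\|h\|_{L^{(p\theta,\infty)}}^\theta$: the $f$-term becomes $\|M_1f\|_{L^{(n/(n-1),\infty)}}^{1/(n-1)}\lesssim_n\|f\|_{L^1(B(0,1))}^{1/(n-1)}$ (endpoint weak-type bound for the first-order fractional maximal operator, e.g. via domination by the order-one Riesz potential); the $G$-term becomes $\|M(|G|^{n/(n-1)})\|_{L^{(1,\infty)}}^{1/n}\lesssim_n\|G\|_{L^{n/(n-1)}(B(0,1))}^{1/(n-1)}$ (weak-$(1,1)$ for $M$); the $M(|\g u|)$-term becomes $\|M(|\g u|)\|_{L^{(n,\infty)}(B(0,1/2))}\lesssim_n\|\g u\|_{L^{(n,\infty)}(B(0,1))}$ (boundedness of $M$ on $L^{(n,\infty)}$, $n>1$, dimensional constant); and the $\|\g u\|_{L^{n-\eps}}$-term only picks up $|B(0,\tfrac12)|^{1/n}$. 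The heart of the proof is the remaining term $\Gamma\tfrac{\sigma}{\tau^{n/(n-1)}}\|M^\sharp_{n-\eps}(\g u)\|_{L^{(n,\infty)}(B(0,1/2))}$, for which one needs
\[
\|M^\sharp_{n-\eps}(\g u)\|_{L^{(n,\infty)}(B(0,1/2))}\lesssim_n\|\g u\|_{L^{(n,\infty)}(B(0,1))}+\|\g u\|_{L^{n-\eps}(B(0,1))}
\]
with a \emph{dimensional} constant — this is exactly what keeps $\Gamma$ universal, the first right-hand term merging into the $\Gamma(\tfrac{\sigma}{\tau^{n/(n-1)}}+\tau^\alpha\sigma)$-coefficient and the second into $c_2(\tau)$. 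The naive argument ($M^\sharp_{n-\eps}(\g u)\le 2(M|\g u|^{n-\eps})^{1/(n-\eps)}$ plus boundedness of $M$ on $L^{(n/(n-\eps),\infty)}$) yields a constant degenerating like $\eps^{-1/n}$ and is therefore inadmissible; instead, estimating the distribution function by $|\{M(|\g u|^{n-\eps})>\lambda^{n-\eps}\}\cap B(0,\tfrac12)|\le\min\{c_n,\;c_n\lambda^{-(n-\eps)}\!\int_{\{|\g u|\gtrsim\lambda\}}|\g u|^{n-\eps}\,dx\}$ and bounding the inner integral by $\|\g u\|_{L^{n-\eps}}^{n-\eps}$ for moderate $\lambda$ and by the $L^{(n,\infty)}$-tail of $|\g u|$ (using the a priori bound $\g u\in L^n$ to control the slowly convergent part) for large $\lambda$ produces exactly the two stated terms with dimensional constants. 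Apart from this estimate, the only genuinely non-routine point is the radius-splitting of Step 2, which forces the localization to $B(0,\tfrac12)$; collecting everything and recording the parameter dependences into $c_1,c_2$ completes the proof.
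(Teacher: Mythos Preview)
Your overall plan is close to the paper's: both start from the pointwise maximal-function inequality coming out of \Cref{rewrite_initial_estimate} and then pass to $L^{(n,\infty)}$. Your Step~2 (the pointwise bound $M^\sharp_{n-\eps}(\g u)\le M^\sharp_{\tau,n-\eps}(\g u)+c(\tau)\|\g u\|_{L^{n-\eps}}$ on $B(0,\tfrac12)$, obtained by splitting the radii at $r\gtrsim\tau$) is exactly the content of the paper's level-set identity for $\lambda>\lambda_0$, and the absorption of the $\Gamma\tau^\alpha$-piece plays the same role in both arguments.

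The genuine divergence is your decomposition in Step~1 of $(\mvint_{B(y,r)}|\g u|^{n-\eps})^{1/(n-\eps)}$ into $|(\g u)_{B(y,r)}|$ plus oscillation. The paper does \emph{not} split: it keeps this term as $\big(M[|\g u|^{n-\eps}]\big)^{1/(n-\eps)}$, so that the unrestricted sharp function $M^\sharp_{n-\eps}(\g u)$ keeps only the \emph{small} coefficient $\Gamma\tau^\alpha$ and is entirely absorbed via the $\lambda_0$-argument. In your version the oscillation piece inherits the additional coefficient $\Gamma\sigma/\tau^{n/(n-1)}$, which for arbitrary $\sigma\in(0,1)$ and small $\tau$ is large and cannot be absorbed; this is what forces you into the direct estimate of Step~3.

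Step~3 is where the gap lies. The claimed bound $\|M^\sharp_{n-\eps}(\g u)\|_{L^{(n,\infty)}(B(0,1/2))}\lesssim_n\|\g u\|_{L^{(n,\infty)}}+\|\g u\|_{L^{n-\eps}}$ with a purely dimensional constant is not delivered by your sketch: after the weak-$(1,1)$ splitting one is left with $\lambda^{\eps}\int_{\{|\g u|\gtrsim\lambda\}}|\g u|^{n-\eps}\le c\int_{\{|\g u|\gtrsim\lambda\}}|\g u|^n$, and this tail cannot be controlled by $\|\g u\|_{L^{(n,\infty)}}$ and $\|\g u\|_{L^{n-\eps}}$ alone (for the borderline profile $|\g u|^*(s)=s^{-1/n}$ it is infinite). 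Invoking ``$\g u\in L^n$ a priori'' gives finiteness but not the quantitative estimate you assert. You are right that the naive maximal bound here hides an $\eps^{-1}$; note that the paper's own proof invokes the analogous estimate $\|M[|\g u|^{n-\eps}]^{1/(n-\eps)}\|_{L^{(n,\infty)}}\le c(n)\|\g u\|_{L^{(n,\infty)}}$ at the very end, where the same issue lurks --- but there it sits in front of the already-small factor $\sigma$ and, since in the downstream iteration $\eps$ is eventually fixed from $\sigma,\tau$ (hence from $n,N$), it does no harm. Your splitting, by contrast, routes the problematic constant to a term with a large coefficient and no absorption available, so the argument as written does not close. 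The simplest repair is to drop the mean/oscillation split in Step~1 and follow the paper's structure.
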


\begin{proof}
	Thanks to \Cref{rewrite_initial_estimate}, we obtain the pointwise estimate in $B(0,1/2)$ :
	\begin{align*}
		M^\sharp_{\tau,n-\ve}(\g u) &\leq c_1 (M_1 f)^\frac{1}{n-1}+c_1 \brac{M\left[|G|^\frac{n}{n-1}\right]}^\frac{1}{n} + \Gamma \left( \frac{\sigma}{\tau^\frac{n}{n-1}} + \tau^\alpha \sigma \right) M[|\g u|^{n-\ve}]^\frac{1}{n-\ve} + \Gamma \tau^\alpha M^\sharp_{n-\ve}(\g u)
	\end{align*}
	So for any $\lambda>0$, we can estimate the level sets of $M^\sharp_{\tau,n-\ve}(\g u)$ on $B(0,\frac{1}{2})$ :
	\begin{align}
		\left| \left\{ x \in B\left( 0, \frac{1}{2} \right) : M^\sharp_{\tau,n-\ve}(\g u)(x) > \lambda \right\} \right| &\leq \left| \left\{ x \in B\left( 0, \frac{1}{2} \right) : M_1 f(x) > \left( \frac{\lambda}{4c_1} \right)^{n-1} \right\} \right| \label{LS_f}\\
		&+ \left| \left\{ x \in B\left( 0, \frac{1}{2} \right) : M\left[ |G|^\frac{n}{n-1}\right](x) > \left( \frac{\lambda}{4c_1} \right)^n \right\} \right| \label{LS_G}\\
		&+ \left| \left\{ x \in B\left( 0, \frac{1}{2} \right) : M[|\g u|^{n-\ve}](x)^\frac{1}{n-\ve} > \frac{\lambda}{ 4\Gamma \left( \frac{\sigma}{\tau^\frac{n}{n-1}} + \tau^\alpha \sigma \right) } \right\} \right| \label{LS_Mgu}\\
		&+ \left| \left\{ x \in B\left( 0, \frac{1}{2} \right) : M^\sharp_{n-\ve}(\g u)(x) > \frac{\lambda}{4 \Gamma \tau^\alpha} \right\} \right|. \label{LS_Msgu}
	\end{align}
	We define
	\begin{align}\label{def_lambda0}
		\lambda_0 = 8 \Gamma \tau^{\alpha - \frac{n}{n-\ve}} \left( \frac{2^n}{|B_1|} \right)^\frac{1}{n-\ve} \left( \int_{B(0,1)} |\g u|^{n-\ve} \right)^\frac{1}{n-\ve}.
	\end{align}
	We claim that if $\lambda > \lambda_0$, then we can rewrite \eqref{LS_Msgu}, namely
	\begin{align}\label{LS_Msgu1}
		\left| \left\{ x \in B\left( 0, \frac{1}{2} \right) : M^\sharp_{n-\ve}(\g u) > \frac{\lambda}{4\Gamma \tau^\alpha} \right\} \right| = \left| \left\{ x \in B\left( 0, \frac{1}{2} \right) : M^\sharp_{\tau,n-\ve}(\g u) > \frac{\lambda}{4 \Gamma \tau^\alpha} \right\} \right|, \quad \forall \lambda > \lambda_0.
	\end{align}
	Indeed, for any $x\in B(0,\frac{1}{2})$, it holds
	\begin{align*}
		M^\sharp_{n-\ve}(\g u)(x) = \max\left( M^\sharp_{\tau,n-\ve}(\g u)(x), \sup \left\{ \left( \mvint_{B(x,\rho)} |\g u - (\g u)_{B(x,\rho)}|^{n-\ve} \right)^\frac{1}{n-\ve} : B(x,\rho) \subset B(0,1),\, B\left( x,\frac{\rho}{\tau} \right) \not\subset B(0,1) \right\} \right).
	\end{align*}
	If $x \in B(0,1/2)$ and $B\left( x,\frac{\rho}{\tau} \right) \not\subset B(0,1)$, then
	$\rho \geq \frac{\tau}{2}$. Therefore,
	\begin{align*}
		\left( \mvint_{B(x,\rho)} |\g u - (\g u)_{B(x,\rho)}|^{n-\ve} \right)^\frac{1}{n-\ve} &\leq \left( \frac{2^n}{\tau^n |B_1|} \right)^\frac{1}{n-\ve} \left( \int_{B(0,1)} |\g u|^{n-\ve} \right)^\frac{1}{n-\ve}.
	\end{align*}
	If $\lambda>\lambda_0$, then
	\begin{align*}
		\frac{\lambda}{4\Gamma \tau^\alpha} &> 2\tau^{- \frac{n}{n-\ve}} \left( \frac{2^n}{|B_1|} \right)^\frac{1}{n-\ve} \left( \int_{B(0,1)} |\g u|^{n-\ve}  \right)^\frac{1}{n-\ve} \\
		&> 2\left( \frac{2^n}{\tau^n |B_1|} \right)^\frac{1}{n-\ve} \left( \int_{B(0,1)} |\g u|^{n-\ve} \right)^\frac{1}{n-\ve}.
	\end{align*}
	So \eqref{LS_Msgu1} holds for $\lambda>\lambda_0$.\\
	To estimate the right-hand side of \eqref{LS_f}, we use a Vitali covering. If $x\in B\left(0,\frac{1}{2}\right)$ satisfy $ M_1 f(x) > \left( \frac{\lambda}{4c_1} \right)^{n-1}$, there exists $r_x>0$ such that $B(x,r_x) \subset B(0,1)$ and
	\begin{align*}
		\frac{1}{r_x^{n-1}} \int_{B(x,r_x)} |f| >\left( \frac{\lambda}{4c_1} \right)^{n-1}.
	\end{align*}
	We can write this in the form
	\begin{align*}
		r_x^n < \left( \frac{4 c_1 }{\lambda} \right)^n \left( \int_{B(x,r_x)} |f| \right)^\frac{n}{n-1}.
	\end{align*}
	Now, we can cover the set $\left\{M_1 f > \left( \frac{\lambda}{4c_1} \right)^{n-1} \right\}$ by balls $(B(x_i,10r_{x_i}))_{i\in I}$ such that the balls $(B(x_i,r_{x_i}))_{i\in I}$ are disjoint. Then, \eqref{LS_f} can be estimated in the following manner :
	\begin{align}
		\left| \left\{ x \in B\left( 0, \frac{1}{2} \right) : M_1 f(x) > \left( \frac{\lambda}{4c_1} \right)^{n-1} \right\} \right| &\leq \sum_{i\in I} |B(x_i, 10 r_{x_i})| \nonumber\\
		&\leq 10^n |B_1| \sum_{i\in I} r_{x_i}^n \nonumber\\
		&\leq 10^n |B_1| \sum_{i\in I} \left( \frac{4 c_1 }{\lambda} \right)^n \left( \int_{B(x_i,r_{x_i})} |f| \right)^\frac{n}{n-1} \nonumber\\
		&\leq 40^n |B_1|\frac{c_1^n}{\lambda^n} \left( \int_{B(0,1)} |f| \right)^\frac{1}{n-1} \sum_{i\in I}\int_{B(x_i,r_{x_i})} |f| \nonumber\\
		&\leq 40^n |B_1|\frac{c_1^n}{\lambda^n} \left( \int_{B(0,1)} |f| \right)^\frac{n}{n-1}. \label{LS_f1}
	\end{align}
As well for \eqref{LS_G}. It holds
\begin{align*}
	\left| \left\{ x \in B\left( 0, \frac{1}{2} \right) : M\left[ |G|^\frac{n}{n-1}\right](x) > \left( \frac{\lambda}{4c_1} \right)^n \right\} \right| &= \left| \left\{x \in B\left( 0, \frac{1}{2} \right) : \exists r_x >0, \mvint_{B(x,r_x)} |G|^\frac{n}{n-1} > \left( \frac{\lambda}{4c_1} \right)^n \right\} \right|\\
	&= \left| \left\{x \in B\left( 0, \frac{1}{2} \right) : \exists r_x >0, \left( \frac{4c_1}{\lambda} \right)^n\int_{B(x,r_x)} |G|^\frac{n}{n-1} > r_x^n \right\} \right|\\
	&\leq 40^n |B_1|\frac{c_1^n}{\lambda^n} \int_{B(0,1)} |G|^\frac{n}{n-1}.
\end{align*}
	Now we estimate the $L^{(n,\infty)}$-norm of $M^\sharp_{\tau,n-\ve}(\g u)$ by separating the cases $\lambda\leq \lambda_0$ and $\lambda>\lambda_0$ :
	\begin{align}
		& \sup_{\lambda>0} \lambda^n \left| \left\{ x \in B\left( 0, \frac{1}{2} \right) : M^\sharp_{\tau,n-\ve}(\g u)(x) > \lambda \right\} \right| \nonumber\\
		& \leq \sup_{\lambda>\lambda_0} \lambda^n \left| \left\{ x \in B\left( 0, \frac{1}{2} \right) : M^\sharp_{\tau,n-\ve}(\g u)(x) > \lambda \right\} \right| \nonumber \\
		&+ \sup_{\lambda\leq \lambda_0} \lambda^n \left| \left\{ x \in B\left( 0, \frac{1}{2} \right) : M^\sharp_{\tau,n-\ve}(\g u)(x) > \lambda \right\} \right| \nonumber \\
		&\leq \sup_{\lambda>\lambda_0} \lambda^n \left| \left\{ x \in B\left( 0, \frac{1}{2} \right) : M^\sharp_{\tau,n-\ve}(\g u)(x) > \lambda \right\} \right| \label{Lninfty_MsGu1}\\
		&+ 2^{-n} |B_1| \lambda_0^n. \label{Lninfty_MsGu2}
	\end{align}
	We estimate \eqref{Lninfty_MsGu1} thanks to \eqref{LS_f1}-\eqref{LS_Mgu}-\eqref{LS_Msgu1} :
	\begin{align}
		& \sup_{\lambda>\lambda_0} \lambda^n \left| \left\{ x \in B\left( 0, \frac{1}{2} \right) : M^\sharp_{\tau,n-\ve}(\g u)(x) > \lambda \right\} \right| \nonumber \\
		&\leq 40^n |B_1| c_1^n \left( \int_{B(0,1)} |f| \right)^\frac{n}{n-1}  + 40^n |B_1| c_1^n \int_{B(0,1)} |G|^\frac{n}{n-1} \label{Lninfty_MsGu11}\\
		&+ 4^n \Gamma^n \left( \frac{\sigma}{\tau^\frac{n}{n-1}} + \tau^\alpha \sigma \right)^n \left\| M[|\g u|^{n-\ve}]^\frac{1}{n-\ve} \right\|_{L^{(n,\infty)}\left( B(0,\frac{1}{2}) \right)}^n \label{Lninfty_MsGu12}\\
		&+ 4^n \Gamma^n \tau^{n\alpha} \left\| M^\sharp_{\tau,n-\ve}(\g u) \right\|_{L^{(n,\infty)}\left( B(0,\frac{1}{2}) \right)}^n. \label{Lninfty_MsGu13}
	\end{align}
	Using \eqref{Lninfty_MsGu11}-\eqref{Lninfty_MsGu12}-\eqref{Lninfty_MsGu13} and \eqref{Lninfty_MsGu2}, we obtain
	\begin{align*}
		\left\| M^\sharp_{\tau,n-\ve}(\g u) \right\|_{L^{(n,\infty)}(B(0,1/2))}^n &\leq 40^n |B_1| c_1^n \left( \int_{B(0,1)} |f| \right)^\frac{n}{n-1}  + 40^n |B_1| c_1^n \int_{B(0,1)} |G|^\frac{n}{n-1}\\
		&+ 4^n \Gamma^n \left( \frac{\sigma}{\tau^\frac{n}{n-1}} + \tau^\alpha \sigma \right)^n \left\| M[|\g u|^{n-\ve}]^\frac{1}{n-\ve} \right\|_{L^{(n,\infty)}\left( B(0,\frac{1}{2}) \right)}^n \\
		&+ 4^n \Gamma^n \tau^{n\alpha} \left\| M^\sharp_{\tau,n-\ve}(\g u) \right\|_{L^{(n,\infty)} \left( B(0,\frac{1}{2}) \right)}^n \\
		&+ 2^{-n}|B_1| \lambda_0^n.
	\end{align*}
	Hence, if $\tau \leq \tau_1$, where
	\begin{align}\label{def_t1}
		\tau_1 := \min\left( (8 \Gamma)^{-1/\alpha}, \tau_0 \right).
	\end{align}
	then $4^n \Gamma^n \tau^{n\alpha} \leq 2^{-n\alpha}$ and we obtain, using the definition \eqref{def_lambda0} of $\lambda_0$ :
	\begin{align*}
		\left\| M^\sharp_{\tau,n-\ve}(\g u) \right\|_{L^{(n,\infty)}\left( B(0,\frac{1}{2}) \right)}^n &\leq \frac{40^n |B_1| c_1^n}{1-2^{-n\alpha}} \left( \int_{B(0,1)} |f| \right)^\frac{n}{n-1}+ \frac{40^n |B_1| c_1^n}{1-2^{-n\alpha}} \int_{B(0,1)} |G|^\frac{n}{n-1}\\
		&+ \frac{4^n }{1-2^{-n\alpha}} \Gamma^n \left( \frac{\sigma}{\tau^\frac{n}{n-1}} + \tau^\alpha \sigma \right)^n \left\| M[|\g u|^{n-\ve}]^\frac{1}{n-\ve} \right\|_{L^{(n,\infty)}\left( B(0,\frac{1}{2}) \right)}^n \\
		& + 4^n \Gamma^n \tau^{n\left( \alpha - \frac{n}{n-\ve} \right) } \left( \frac{2^n}{|B_1|} \right)^\frac{n}{n-\ve} \left( \int_{B(0,1)} |\g u|^{n-\ve} \right)^\frac{n}{n-\ve}.
	\end{align*}
	Since $\tau\in(0,1)$ and $\ve\in(0,1)$, it holds $\tau^{n\left( \alpha - \frac{n}{n-\ve} \right) } \leq \tau^{n\left( \alpha - \frac{n}{n-1} \right) }$ and $\left( \frac{2^n}{|B_1|} \right)^\frac{n}{n-\ve} \leq \max\left( \frac{2^n}{|B_1|}, \left( \frac{2^n}{|B_1|} \right)^\frac{n}{n-1} \right)$. We conclude thanks to the estimate
	\begin{align*}
		\left\| M[|\g u|^{n-\ve}]^\frac{1}{n-\ve} \right\|_{L^{(n,\infty)}\left( B(0,\frac{1}{2}) \right)} &\leq \left\| M[|\g u|^{n-\ve}] \right\|_{L^{(\frac{n}{n-\ve},\infty)}\left( B(0,\frac{1}{2}) \right)}^\frac{1}{n-\ve} \\
		&\leq c(n)\left\| |\g u|^{n-\ve} \right\|_{L^{(\frac{n}{n-\ve},\infty)}\left( B(0,1) \right)}^\frac{1}{n-\ve} \\
		&\leq c(n)\|\g u \|_{L^{(n,\infty)}\left( B(0,1) \right)}.
	\end{align*}
\end{proof}

To apply known results on the sharp maximal function, we show that in the definition of $M^\sharp_{\tau,n-\ve}$, one can replace the supremum over cubes and not over balls. We define
\begin{align*}
	M^\sharp_{\tau,n-\ve,c} g(x) &= \sup\left\{ \left( \mvint_{Q} |g - (g)_{Q}|^{n-\ve} \right)^\frac{1}{n-\ve} : \frac{1}{\tau}Q \subset B(0,1), x\in Q, Q \text{ is a cube} \right\},
\end{align*}
where $\frac{1}{\tau}Q$ is the cube of same center as $Q$, with side length $\frac{1}{\tau}$ times the side length of $Q$. We show that the maximal functions $M^\sharp_{\tau,n-\ve}$ and $M^\sharp_{\tau,n-\ve,c}$ are pointwise comparable.

\begin{lemma}\label{equivalence_ball_cube}
	There exists $C=C(n)>1$ and $a=a(n)>1$ such that for any function $g\in L^{n-\ve}_{loc}(B(0,1))$, $\tau\in(0,\frac{1}{a^2})$ and $x\in B(0,1)$ it holds
	\begin{align*}
		\frac{1}{C}M^\sharp_{\tau,n-\ve}g(x) \leq M^\sharp_{a\tau,n-\ve,c}g(x) \leq C M^\sharp_{a^2\tau,n-\ve}g(x).
	\end{align*}
\end{lemma}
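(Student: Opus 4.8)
The plan is to run the classical comparison between mean oscillations over balls and over cubes, choosing the dilation constant $a$ large enough that the admissibility constraint ``the $\frac{1}{\tau}$-dilate stays inside $B(0,1)$'' survives both the ball-to-cube and the cube-to-ball passage. Concretely I will take $a:=3\sqrt n$, which is $>1$ since $n\geq 3$. The one analytic ingredient is the elementary mean-oscillation inequality: if $S\subset S'$ are measurable sets of finite positive measure and $p\geq 1$, then for every $g\in L^p_{loc}$
\[
\left(\mvint_{S}|g-(g)_{S}|^{p}\right)^{\frac1p}\leq 2\left(\frac{|S'|}{|S|}\right)^{\frac1p}\left(\mvint_{S'}|g-(g)_{S'}|^{p}\right)^{\frac1p},
\]
which follows from the pointwise bound $|g-(g)_{S}|^{p}\leq 2^{p}|g-c|^{p}$ applied with $c=(g)_{S'}$ (and Jensen's inequality for $|(g)_{S'}-(g)_{S}|^{p}$) together with $\mvint_{S}\leq \frac{|S'|}{|S|}\mvint_{S'}$ for nonnegative integrands. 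I will invoke it with $p=n-\ve$ and the two pairs $(S,S')=(B(x,r),Q)$ and $(S,S')=(Q,B(x,R))$; in both cases $|S'|/|S|$ is a purely dimensional ratio, so the exponent $\frac{1}{n-\ve}$ contributes only an $n$-dependent constant, which is why the final constant may be taken of the form $C=C(n)$.

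For the left inequality I fix $x\in B(0,1)$ and a ball $B(x,r)$ admissible for $M^\sharp_{\tau,n-\ve}$, i.e.\ $B(x,r/\tau)\subset B(0,1)$, and let $Q$ be the cube centered at $x$ of side length $2r$, so that $B(x,r)\subset Q\subset B(x,\sqrt n\,r)$ and $x\in Q$. The point is that
\[
\tfrac{1}{a\tau}Q\subset B\!\left(x,\tfrac{\sqrt n\,r}{a\tau}\right)=B\!\left(x,\tfrac{r}{3\tau}\right)\subset B\!\left(x,\tfrac{r}{\tau}\right)\subset B(0,1),
\]
so $Q$ is admissible for $M^\sharp_{a\tau,n-\ve,c}g(x)$; since $|Q|/|B(x,r)|=2^{n}/|B_{1}|$, the mean-oscillation inequality bounds the ball-average of $g$ by $C(n)$ times the cube-average, and taking the supremum over admissible $r$ gives $\frac1C M^\sharp_{\tau,n-\ve}g(x)\leq M^\sharp_{a\tau,n-\ve,c}g(x)$.

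For the right inequality I fix a cube $Q$ admissible for $M^\sharp_{a\tau,n-\ve,c}g(x)$, with side length $\ell$, center $z$ and $x\in Q$ (hence $|x-z|\leq \frac{\sqrt n}{2}\ell$), and I take $R:=\sqrt n\,\ell$, so that $Q\subset B(z,\frac{\sqrt n}{2}\ell)\subset B(x,R)$ and $|B(x,R)|/|Q|=|B_{1}|\,n^{n/2}$. Admissibility of $Q$ gives $B(z,\frac{\ell}{2a\tau})\subset \frac{1}{a\tau}Q\subset B(0,1)$, and then I check the inclusion $B(x,\frac{R}{a^{2}\tau})\subset B(z,\frac{\ell}{2a\tau})$, which amounts to
\[
|x-z|+\frac{R}{a^{2}\tau}\;\leq\;\frac{\sqrt n}{2}\,\ell+\frac{\ell}{9\sqrt n\,\tau}\;\leq\;\frac{\ell}{6\sqrt n\,\tau}\;=\;\frac{\ell}{2a\tau};
\]
the middle inequality is equivalent to $9n\tau\leq 1$, i.e.\ $\tau\leq a^{-2}$, which holds by hypothesis. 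Hence $B(x,R)$ is admissible for $M^\sharp_{a^{2}\tau,n-\ve}g(x)$, the mean-oscillation inequality bounds the cube-average by $C(n)$ times the ball-average, and a supremum over admissible cubes $Q$ yields $M^\sharp_{a\tau,n-\ve,c}g(x)\leq C(n)\,M^\sharp_{a^{2}\tau,n-\ve}g(x)$. The only step needing real care is exactly this last inclusion chase --- balancing the off-center displacement $|x-z|$ and the shrinkage factor $\frac{1}{a^{2}\tau}$ against the inscribed ball of $\frac{1}{a\tau}Q$ --- and it is this computation that forces the choice $a=3\sqrt n$ together with the standing hypothesis $\tau<a^{-2}$.
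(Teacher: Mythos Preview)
Your proof is correct and follows essentially the same strategy as the paper: compare a ball with a concentric cube (and vice versa), use the elementary mean-oscillation inequality for nested sets, and choose the dilation factor $a$ so that the admissibility constraint $\frac{1}{\tau}\cdot(\text{dilate})\subset B(0,1)$ survives the passage. The paper leaves $a$ implicit and, for the cube-to-ball direction, simply says ``consider the smallest ball $B$ containing $Q$ with the same center; the computations are similar''. Your version is actually more careful here: since the ball maximal function uses balls \emph{centered at $x$} while the admissible cube $Q$ is only required to \emph{contain} $x$, you correctly take $B(x,R)$ with $R=\sqrt{n}\,\ell$ and verify the inclusion $B(x,R/(a^{2}\tau))\subset B(z,\ell/(2a\tau))\subset \frac{1}{a\tau}Q$, which is precisely what forces the hypothesis $\tau<a^{-2}$ with your explicit choice $a=3\sqrt n$.
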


\begin{proof}
	Given a ball $B\subset B(0,1)$ such that $\frac{1}{\tau} B \subset B(0,1)$, consider $Q$ the smallest cube having the same center as $B$ and containing $B$. There exists $C=C(n)$ such that $1\leq \frac{|Q|}{|B|} \leq C$. In particular, there exists $a=a(n)>1$ such that $\frac{1}{a\tau} Q \subset B(0,1)$. Then, for any function $g\in C^\infty(B(0,1))$, it holds
	\begin{align*}
		\left( \mvint_B |g-(g)_B|^{n-\ve} \right)^\frac{1}{n-\ve} &\leq \left( \frac{|Q|}{|B|} \mvint_Q |g - (g)_B|^{n-\ve} \right)^\frac{1}{n-\ve} \\
		&\leq \left( \frac{|Q|}{|B|} \right)^\frac{1}{n-\ve} \left[ \left( \mvint_Q |g-(g)_Q|^{n-\ve} \right)^\frac{1}{n-\ve} + |(g)_Q - (g)_B| \right] \\
		&\leq \left( \frac{|Q|}{|B|} \right)^\frac{1}{n-\ve} \left[ \left( \mvint_Q |g-(g)_Q|^{n-\ve} \right)^\frac{1}{n-\ve} + \mvint_B |(g)_Q - g| \right] \\
		&\leq \left( \frac{|Q|}{|B|} \right)^\frac{1}{n-\ve} \left[ \left( \mvint_Q |g-(g)_Q|^{n-\ve} \right)^\frac{1}{n-\ve} + \frac{|Q|}{|B|} \mvint_Q |(g)_Q - g| \right] \\
		&\leq \left( \frac{|Q|}{|B|} \right)^\frac{1}{n-\ve} \left( 1+  \frac{|Q|}{|B|} \right) \left( \mvint_Q |g-(g)_Q|^{n-\ve} \right)^\frac{1}{n-\ve}.
	\end{align*}
	Using $\ve\in(0,1)$, we conclude that
	\begin{align*}
		\left( \mvint_B |g-(g)_B|^{n-\ve} \right)^\frac{1}{n-\ve} &\leq \left( \frac{|Q|}{|B|} \right)^\frac{1}{n-1} \left( 1+  \frac{|Q|}{|B|} \right) \left( \mvint_Q |g-(g)_Q|^{n-\ve} \right)^\frac{1}{n-\ve}
	\end{align*}
	Therefore, $M^\sharp_{\tau,n-\ve}g(x) \leq C(n) M^\sharp_{a\tau,n-\ve,c}g(x)$ for some $a=a(n)>1$. The converse is shown with in the same manner: given a cube $Q$, we consider the smallest ball $B$ containing $Q$ with the same center. The computations are similar.
\end{proof}

We now obtain a lower bound for $\|M^\sharp_{a\tau,1,c} g\|_{L^{p,q}(B(0,1))}$, given $g \in L^\infty(B(0,1))$. This is a direct adaptation of the proof of \cite[Corollary 7.5, p.380]{bennett1988}.

\begin{lemma}\label{goingtoLpq}
	Let $a$ be defined in \Cref{equivalence_ball_cube} and $\tau\in(0,\tau_1)$, where $\tau_1$ is defined in \Cref{estimate_norm_MsGu}. Let $p\in(1,\infty)$ and $q\in[1,\infty]$. For any function $g\in L^{(p,q)}(B(0,1))$, it holds
	\begin{align*}
		\|g\|_{L^{p,q}(B(0,a\tau))} &\leq C(n,p,q)\left( \|M^\sharp_{a\tau,1,c} g\|_{L^{p,q}(B(0,1))} + \mvint_{B(0,a\tau)} |g| \right).
	\end{align*}
\end{lemma}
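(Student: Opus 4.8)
The plan is to follow the proof of \cite[Chapter~5, Corollary~7.5]{bennett1988}, adapted so that the \emph{local} sharp maximal function $M^\sharp_{a\tau,1,c}$ appears instead of the usual cube sharp maximal function; the analytic engine is already packaged in \Cref{th:Msharp_star}, so the only genuinely new ingredient is a geometric localization.

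\textbf{Step 1 (localization to small cubes).} I first record the elementary fact that if $Q_0$ is \emph{any} cube with $\tfrac1{a\tau}Q_0\subset B(0,1)$, then $\tfrac1{a\tau}Q\subset\tfrac1{a\tau}Q_0\subset B(0,1)$ for every subcube $Q\subset Q_0$: writing $Q_0=c_0+[-\tfrac L2,\tfrac L2]^n$ and $Q=c+[-\tfrac\ell2,\tfrac\ell2]^n$ with $\|c-c_0\|_\infty\le\tfrac{L-\ell}2$, the bound $a\tau<1$ (valid since $\tau<\tau_1\le a^{-2}$, hence $a\tau<a^{-1}<1$, cf.\ \Cref{equivalence_ball_cube}) gives $\|c-c_0\|_\infty+\tfrac\ell{2a\tau}\le\tfrac{L-\ell}2+\tfrac\ell{2a\tau}\le\tfrac L{2a\tau}$. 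Consequently $M^\sharp_{Q_0}g\le M^\sharp_{a\tau,1,c}g$ pointwise on $Q_0$, where $M^\sharp_{Q_0}$ denotes the (unrestricted) cube sharp maximal function of \Cref{th:Msharp_star}. Cubes with $\tfrac1{a\tau}Q_0\subset B(0,1)$ have side-length only comparable to $a\tau$, and $a\tau<a^{-1}$ is bounded away from $1$ (for each fixed $n$); hence I can cover $B(0,a\tau)$ by a dimensional number $M=M(n)$ of such cubes $Q_0^{(1)},\dots,Q_0^{(M)}$, each with $|Q_0^{(j)}|\aeq_n|B(0,a\tau)|$ and $Q_0^{(j)}\subset B(0,1)$. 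Decomposing $\bbbone_{B(0,a\tau)}$ into pieces supported in the $Q_0^{(j)}$ and using the triangle inequality for $\|\cdot\|_{L^{(p,q)}}$ (a genuine norm since $p>1$), it suffices to estimate $\|g\|_{L^{(p,q)}(Q_0^{(j)})}$ for each $j$; abbreviate $Q_0=Q_0^{(j)}$.

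\textbf{Step 2 (rearrangement ODE, as in Bennett--Sharpley).} Put $h:=M^\sharp_{Q_0}g$. By \Cref{th:Msharp_star}, $g^{**}(t)-g^*(t)\le c(n)\,h^*(t)$ for $0<t<\tfrac{|Q_0|}6$, rearrangements being taken relative to $Q_0$. Since $g\in L^1(Q_0)$, the function $t\mapsto g^{**}(t)=\tfrac1t\int_0^tg^*$ is locally absolutely continuous on $(0,\infty)$ with $-t\,(g^{**})'(t)=g^{**}(t)-g^*(t)$ a.e.; integrating over $(t,\tfrac{|Q_0|}6)$ and using $g^{**}(\tfrac{|Q_0|}6)\le\tfrac6{|Q_0|}\int_{Q_0}|g|$ and $h^*\ge0$,
\[
g^{**}(t)\ \le\ 6\,\mvint_{Q_0}|g|\ +\ c(n)\int_t^{\infty}\frac{h^*(s)}{s}\,ds,\qquad 0<t<\tfrac{|Q_0|}6 .
\]
I insert this into $\|g\|_{L^{(p,q)}(Q_0)}=\big\|t^{1/p}g^{**}(t)\big\|_{L^q((0,\infty),\,dt/t)}$, splitting the integral at $t=\tfrac{|Q_0|}6$. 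On $t\ge\tfrac{|Q_0|}6$, monotonicity of $g^{**}$, the bound $g^{**}(\tfrac{|Q_0|}6)\le6\mvint_{Q_0}|g|$ and the identity $g^{**}(t)=t^{-1}\|g\|_{L^1(Q_0)}$ for $t\ge|Q_0|$ yield a contribution $\aleq_{p,q}|Q_0|^{1/p}\mvint_{Q_0}|g|$. On $t<\tfrac{|Q_0|}6$, the displayed bound and the triangle inequality give a contribution $\aleq_{p,q}|Q_0|^{1/p}\mvint_{Q_0}|g|+\big\|t^{1/p}\int_t^\infty h^*(s)\tfrac{ds}s\big\|_{L^q(dt/t)}$, and the last quantity is $\aleq_{p,q}[h]_{L^{(p,q)}}$ by the weighted Hardy inequality $\big\|t^{1/p}\int_t^\infty\varphi(s)\tfrac{ds}s\big\|_{L^q(dt/t)}\le p\,\big\|t^{1/p}\varphi\big\|_{L^q(dt/t)}$, itself a consequence of Minkowski's integral inequality after the substitution $s=tr$ (the case $q=\infty$ being a one-line direct estimate) — here $p<\infty$ is essential. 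Combining, and recalling $[h]_{L^{(p,q)}}\aeq_{p,q}\|h\|_{L^{(p,q)}}$,
\[
\|g\|_{L^{(p,q)}(Q_0)}\ \aleq_{n,p,q}\ \|M^\sharp_{Q_0}g\|_{L^{(p,q)}(Q_0)}\ +\ |Q_0|^{1/p}\,\mvint_{Q_0}|g| .
\]

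\textbf{Step 3 (summation and conclusion).} Summing over $j$ and using Step~1 ($M^\sharp_{Q_0^{(j)}}g\le M^\sharp_{a\tau,1,c}g$, hence $\|M^\sharp_{Q_0^{(j)}}g\|_{L^{(p,q)}(Q_0^{(j)})}\le\|M^\sharp_{a\tau,1,c}g\|_{L^{(p,q)}(B(0,1))}$), together with $|Q_0^{(j)}|\le|B(0,1)|$ (so $|Q_0^{(j)}|^{1/p}\aleq_{n,p}1$) and $\mvint_{Q_0^{(j)}}|g|\aleq_n\mvint_{B(0,a\tau)}|g|$ — the latter obtained by running the argument with $g$ replaced by $g\bbbone_{B(0,a\tau)}$ on the cubes contained in $B(0,a\tau)$ and treating the $O(n)$ boundary cubes directly — yields the asserted estimate. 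I expect the main obstacle to be precisely this bookkeeping of Steps~1 and~3: verifying that the local maximal function $M^\sharp_{a\tau,1,c}$ (which only sees cubes whose $\tfrac1{a\tau}$-dilate stays inside $B(0,1)$) still dominates the cube sharp maximal function on each $Q_0^{(j)}$, that the covering can be taken with a purely dimensional number of cubes, and that the zeroth-order term localizes to $B(0,a\tau)$; by contrast Step~2 is the standard Bennett--Sharpley computation and is routine.
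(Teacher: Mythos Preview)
Your proposal is correct and follows essentially the same route as the paper: localize to a cube, invoke \Cref{th:Msharp_star} to bound $g^{**}-g^*$ by the rearrangement of the local sharp maximal function, integrate the resulting identity for $g^{**}$, and conclude via Hardy's inequality. The only difference is cosmetic --- the paper works with a single cube $Q_0=[-\tfrac{a\tau}{2},\tfrac{a\tau}{2}]^n$ and the restriction $\tilde g=g\,\bbbone_{Q_0}$ rather than your covering by a dimensional number of cubes --- and your Step~3 bookkeeping on the zeroth-order term, while slightly hand-wavy, is no more so than the paper's own single-cube argument.
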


\begin{proof}
	We start with the straightforward inequality
	\begin{align*}
		\|M^\sharp_{a\tau,1,c} g\|_{L^{p,q}(B(0,1))} \geq \|\tilde{M}^\sharp \tilde{g}\|_{L^{p,q}(Q_0)},
	\end{align*}
	where $Q_0 := [-\frac{a}{2}\tau , \frac{a}{2} \tau]^n$, $\tilde{g} := g\mathbf{1}_{Q_0}$ and
	\begin{align*}
		\tilde{M}^\sharp \tilde{g}(x) &= \sup\left\{  \mvint_{Q} |\tilde{g} - (\tilde{g})_{Q}| : Q \subset Q_0, Q \ni x, Q\text{ is a cube} \right\}.
	\end{align*}
	Thanks to \Cref{th:Msharp_star}, it holds
	\begin{align*}
		\tilde{g}^{**}(t) -\tilde{g}^*(t) &\leq c(n) (\tilde{M}^\sharp \tilde{g})^*(t).
	\end{align*}
	By integration by parts, for any $0<t\leq u \leq \frac{1}{6}|Q_0|$, it holds
	\begin{align*}
		\tilde{g}^{**}(t) - \tilde{g}^{**}(u) &= \int_t^u \left( \tilde{g}^{**}(s) - \tilde{g}^*(s) \right) \frac{ds}{s}.
	\end{align*}
	The choice $ u = \frac{1}{6}|Q_0|$ leads to
	\begin{align*}
		\tilde{g}^{**}(t) &\leq \tilde{g}^{**}\left( \frac{|Q_0|}{6} \right) + c(n)\int_t^\infty(\tilde{M}^\sharp \tilde{g})^*(s) \frac{ds}{s} \\
		&\leq c(n)\mvint_{Q_0} |\tilde{g}| + c(n)\int_t^\infty(\tilde{M}^\sharp \tilde{g})^*(s) \frac{ds}{s}.
	\end{align*}
	Thanks to Hardy's inequality, see \cite[(3.19), Lemma 3.9, p.124]{bennett1988} :
	\begin{align*}
		\|\tilde{g}\|_{L^{p,q}(Q_0)} &\leq c(n,p,q)|Q_0|^\frac{1}{p}\mvint_{Q_0} |\tilde{g}| + c(n,p,q)\|\tilde{M}^\sharp \tilde{g} \|_{L^{p,q}(Q_0)}.
	\end{align*}
	Since $Q_0 \subset B(0,1)$, it holds $|Q_0|^\frac{1}{p} \leq c(n,p)$.
\end{proof}

Combining \Cref{estimate_norm_MsGu}, \Cref{equivalence_ball_cube}, and \Cref{goingtoLpq} we readily obtain \Cref{Lninfty_estimate}.

\section{Regularity iteration: Proof of Theorem~\ref{th:regularityLn1}}\label{s:iteration}
The goal of this section is to prove the following result.
\begin{proposition}\label{pr:iteration}
	There exists $\gamma_0,\lambda,\nu \in(0,1)$ and $k>0$ depending on $n$ and $N$ such that the following holds.
	Consider $u \in W^{1,n}(B^n(0,1);\R^N)$, $Q\in W^{1,n}(B^n(0,1);SO(N))$ and $\Omega^Q\in L^{(n,1)}(B^n(0,1);\R^n \otimes \R^{N\times N})$ such that
	\begin{align*}
		-\di(|\g u|^{n-2} Q\g u) = \Omega^Q |\g u|^{n-2} \g u \quad \text{in $B(0,1)$}
	\end{align*}
	Assume that
	\begin{align*}
		\|\g Q\|_{L^n(B(0,1))} + \|\Omega^Q \|_{L^{(n,1)}(B(0,1))} \leq \gamma_0.
	\end{align*}
	Then for any ball $B(x,r)\subset B(0,1)$,
	\begin{align*}
		\|\g u\|_{L^{(n,\infty)}(B(x,\lambda r))}^{n-\ve} + \frac{k}{(\lambda r)^\ve} \int_{B(x,\lambda r)} |\g u|^{n-\ve} &\leq \nu \left[ \|\g u\|_{L^{(n,\infty)}(B(x,r))}^{n-\ve} + \frac{k}{r^\ve} \int_{B(x,r)} |\g u|^{n-\ve} \right].
	\end{align*}
\end{proposition}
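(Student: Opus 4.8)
The plan is to combine the two a priori estimates established above — \Cref{Lninfty_estimate} for the $L^{(n,\infty)}$-part of the gradient and \Cref{la:easyiwaniec} for the sub-natural $L^{n-\ve}$-part — applied on the ball $B(x,r)$, and to balance them against one another by a suitable weight $k$. First I record the form of the right-hand side: take $f:=\Omega^Q|\g u|^{n-2}\g u$ and $G:=0$, so that $u$ solves $-\di(|\g u|^{n-2}Q\g u)=f+\di G$ in $B(0,1)$. Using the homogeneity $\| |\g u|^{n-1}\|_{L^{(n/(n-1),\infty)}}=\|\g u\|_{L^{(n,\infty)}}^{n-1}$ together with the Hölder inequality for Lorentz spaces (pairing $L^{(n,1)}$ with $L^{(n/(n-1),\infty)}$), for every ball $B(x,\rho)\subset B(0,1)$ one has
\[
\|f\|_{L^1(B(x,\rho))}\aleq \|\Omega^Q\|_{L^{(n,1)}(B(x,\rho))}\,\|\g u\|_{L^{(n,\infty)}(B(x,\rho))}^{n-1}\leq \gamma_0\,\|\g u\|_{L^{(n,\infty)}(B(x,\rho))}^{n-1}.
\]
On an $n$-dimensional ball the quantities $\|\g u\|_{L^{(n,\infty)}}$, $\|\g Q\|_{L^n}$, $\|\Omega^Q\|_{L^{(n,1)}}$ and $\|f\|_{L^1}$ are scale invariant, while $\int|\g u|^{n-\ve}$ carries a factor $r^{\ve}$; this lets me transfer \Cref{Lninfty_estimate} and \Cref{la:easyiwaniec}, stated on fixed balls, to an arbitrary $B(x,r)\subset B(0,1)$ after rescaling. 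Throughout write $\psi(\rho):=\rho^{-\ve}\int_{B(x,\rho)}|\g u|^{n-\ve}$.

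Next come the two ingredient estimates. Fixing a small $\lambda$ (with $\lambda<\min(\tau_1,\tfrac18)$) and applying the rescaled \Cref{Lninfty_estimate} with $\tau=\lambda$ and $G=0$, then raising to the power $n-\ve$ and inserting the bound on $\|f\|_{L^1}$, one gets
\[
\|\g u\|_{L^{(n,\infty)}(B(x,\lambda r))}^{n-\ve}\leq A_1\,\|\g u\|_{L^{(n,\infty)}(B(x,r))}^{n-\ve}+B_2\,\psi(r),
\]
where $A_1\aleq C_1^{\,n-\ve}\gamma_0^{\frac{n-\ve}{n-1}}+\Gamma^{\,n-\ve}\big(\tfrac{\sigma}{\lambda^{n/(n-1)}}+\lambda^{\alpha}\big)^{n-\ve}$, $B_2\aleq C_2(\lambda)^{n-\ve}$, and $\sigma$ is the (small) parameter of \Cref{Lninfty_estimate}. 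On the other hand, applying the rescaled \Cref{la:easyiwaniec} with parameter $\tfrac12$, with $R=r/2$ and $\theta=2\lambda$ (so $B(\theta R)=B(x,\lambda r)$ and $B(2R)=B(x,r)$), then absorbing the negative "hole" term $-C(\tfrac12)(\theta R)^{-\ve}\|\g u\|_{L^{n-\ve}(B(\theta R))}^{n-\ve}$ to the left and using $\|\g u\|_{L^{n-\ve}(B(R))}\leq\|\g u\|_{L^{n-\ve}(B(2R))}$, one arrives at
\[
\psi(\lambda r)\leq \mu\,\psi(r)+C(\ve)\,\gamma_0^{\frac{n-\ve}{n-1}}\,\|\g u\|_{L^{(n,\infty)}(B(x,r))}^{n-\ve},\qquad \mu:=2^{\ve}\,\frac{\tfrac12+C(\tfrac12)}{1+C(\tfrac12)}.
\]
Since $\tfrac{1/2+C(1/2)}{1+C(1/2)}=1-\tfrac{1/2}{1+C(1/2)}<1$, we get $\mu<1$ once $\ve$ is small enough, and $\mu$ stays bounded away from $1$ by an amount depending only on $n,N$.

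To balance, set $\Phi(\rho):=\|\g u\|_{L^{(n,\infty)}(B(x,\rho))}^{n-\ve}+k\,\psi(\rho)$. Adding the first ingredient estimate to $k$ times the second,
\[
\Phi(\lambda r)\leq \big[A_1+kC(\ve)\gamma_0^{\frac{n-\ve}{n-1}}\big]\,\|\g u\|_{L^{(n,\infty)}(B(x,r))}^{n-\ve}+\big[B_2+k\mu\big]\,\psi(r).
\]
Put $\nu:=\tfrac{1+\mu}{2}\in(0,1)$ and $k:=\tfrac{2B_2}{1-\mu}$, so that $B_2+k\mu=\nu k$; this handles the $\psi(r)$-coefficient exactly. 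It then remains to ensure $A_1+kC(\ve)\gamma_0^{\frac{n-\ve}{n-1}}\leq\nu$, which I would arrange by choosing the parameters in the following order: (1) $\lambda$ small — depending only on $n,N$ via $\Gamma,\alpha$, the constraint $\lambda<\min(\tau_1,\tfrac18)$, and the lower bound $\nu\geq\tfrac12\big(1+\tfrac{1/2+C(1/2)}{1+C(1/2)}\big)$ which holds before $\ve$ is fixed — so that the $\Gamma^{\,n-\ve}\lambda^{\alpha(n-\ve)}$-contribution to $A_1$ is $\leq\nu/8$; (2) $\sigma$ small (depending on $\lambda$) so that the $\Gamma^{\,n-\ve}(\sigma/\lambda^{n/(n-1)})^{n-\ve}$-contribution is $\leq\nu/8$; (3) $\ve$ small enough that $\mu<1$ and below the $\ve_0$-thresholds of \Cref{Lninfty_estimate} (with this $\sigma$) and \Cref{la:easyiwaniec} (with parameter $\tfrac12$ and $\theta=2\lambda$); (4) $k$ is then determined; (5) $\gamma_0$ small enough to absorb the remaining $\gamma_0$-dependent terms into $\nu$. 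All final parameters depend only on $n$ and $N$, and we conclude $\Phi(\lambda r)\leq\nu\,\Phi(r)$, which is the assertion.

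The main obstacle will be the hole-filling step that produces $\mu<1$: unlike the other smallness parameters, the parameter $\sigma$ of \Cref{la:easyiwaniec} must \emph{not} be driven to $0$ — that would push $C(\sigma)\to\infty$ and $\mu\to 2^{\ve}>1$ — so one must fix it (here at $\tfrac12$) and rely on $\ve$ being genuinely small. Coupled with this is the need to order the many smallness choices ($\lambda$, then $\sigma$, then $\ve$, then $k$, then $\gamma_0$) so that no circular dependency arises, which works precisely because a positive lower bound for $\nu$ depending only on $n,N$ is available before $\ve$ is pinned down. A secondary (routine but error-prone) point is the scaling bookkeeping transferring the fixed-ball estimates to $B(x,r)$, where one must track the $r^{\ve}$-factors in the $L^{n-\ve}$-terms carefully.
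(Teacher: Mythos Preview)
Your proposal is correct and follows essentially the same approach as the paper: combine \Cref{Lninfty_estimate} and \Cref{la:easyiwaniec} with a suitable weight $k$, control $\|f\|_{L^1}$ via the Lorentz--H\"older pairing $L^{(n,1)}\times L^{(n/(n-1),\infty)}$, and use the hole term in \Cref{la:easyiwaniec} to make the $L^{n-\ve}$--coefficient strictly less than one. The only cosmetic difference is bookkeeping: the paper feeds the \emph{same} small $\sigma$ into both lemmas and absorbs the hole term \emph{after} multiplying by $10c_2(\tau)$ (so the ratio $\tfrac{1+10\sigma+10c_3(\sigma)}{10(1+c_3(\sigma))}<1$ regardless of how large $c_3(\sigma)$ is), whereas you decouple the two parameters, fix $\sigma=\tfrac12$ in \Cref{la:easyiwaniec}, and do a standard hole-filling first to get $\mu<1$ --- both arrangements work and yield constants depending only on $n,N$.
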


Once \Cref{pr:iteration} is proven, we can conclude
\begin{proof}[Proof of \Cref{th:regularityLn1}]
	By a standard iterative argument we obtain from \Cref{pr:iteration} the existence of $C>0$ and $\beta\in(0,1)$ such that for any ball $B(x,r)\subset B(0,1)$:
	\begin{align*}
		\|\g u\|_{L^{(n,\infty)}(B(x,r))}^{n-\ve} + \frac{k}{r^\ve} \int_{B(x, r)} |\g u|^{n-\ve} &\leq Cr^\beta.
	\end{align*}
	By \cite[Corollary 3.2]{han2011}, we conclude that $u$ is continuous. 
\end{proof}

\begin{proof}[Proof of \Cref{pr:iteration}]
	For simplicity of presentation we assume that $x=0$ and $r=1$.
	
	We fix $\sigma,\tau \in(0,1)$ to be specified later.

	First we have the estimate coming from the \Cref{Lninfty_estimate}. We obtain $\ve_1(\sigma)\in(0,1)$ and a constant $\gamma_1(\sigma)\in(0,1)$ such that if $\|\g Q\|_{L^n(B(0,1))} \leq \gamma_1$, then the following holds. There exists $\Gamma=\Gamma(n,N)>0$, $\alpha=\alpha(n,N)\in(0,1)$ and $\tau_1 =\tau_1(n,N)\in(0,1)$ such that for
	\[
	\tau \in (0,\tau_1), \quad  \eps \in (0,\eps_0)
	\]
	\begin{align*}
		\| \g u \|_{L^{(n,\infty)}(B(0,\tau ))}^{n-\ve} &\leq c_1(\sigma,\tau,\eps) \left\| \Omega |\g u|^{n-1} \right\|_{L^1(B(0,1))}^\frac{n-\ve}{n-1} + c_2(\tau)\int_{B(0,1)}|\g u|^{n-\ve} \\
		&+ \Gamma\left( \frac{\sigma}{\tau^\frac{n}{n-1}} + \tau^\alpha \right)^{n-\ve} \|\g u\|_{L^{(n,\infty)}(B(0,1))}^{n-\ve}.
	\end{align*}
	Then we have the estimate from \Cref{la:easyiwaniec}: there exists $\ve_0(\sigma,\tau)\in(0,1)$ and $\gamma_0(\sigma)\in(0,1)$ such that if $\|\g Q\|_{L^n(B(0,1))}\leq \gamma_0$, then for the choice $\ve\in (0,\ve_0)$, it holds
	\begin{align*}
		\frac{1}{\tau^\ve} \int_{B(0,\tau )} |\g u|^{n-\ve} &\leq  \sigma \int_{B(0,1)} |\g u|^{n-\ve} + c_3(\sigma)\left( \int_{B(0,1)}|\g u|^{n-\ve} - \frac{1}{\tau^\ve} \int_{B(0,\tau)} |\g u|^{n-\ve} \right) \\
		&+ c_4(\sigma,\tau,\eps) \left\| \Omega |\g u|^{n-1} \right\|_{L^1(B(0,1))}^\frac{n-\ve}{n-1}.
	\end{align*}
	
	From now on we set $\eps = \min\{\eps_0,\eps_1\}$ (which depends on $\tau$ and $\sigma$).
	
	Thanks to Hölder's inequality
	\[
	\left\| \Omega |\g u|^{n-1} \right\|_{L^1(B(0,1))}\leq \|\Omega\|_{L^{(n,1)}(B(0,1))}\|\g u\|_{L^{(n,\infty)}(B(0,1))}^{n-1}.
	\]
	
	Plugging this into the above estimates, we obtain first
	\begin{align}\label{estimate1}
		\| \g u \|_{L^{(n,\infty)}(B(0,\tau ))}^{n-\ve} &\leq \left( c_1(\sigma,\tau,\eps) \|\Omega\|_{L^{(n,1)}(B(0,1))}^\frac{n-\ve}{n-1} + \Gamma\left( \frac{\sigma}{\tau^\frac{n}{n-1}} + \tau^\alpha \right)^{n-\ve}  \right) \|\g u \|_{L^{(n,\infty)}(B(0,1))}^{n-\ve} \\
		&+ c_2(\tau)\int_{B(0,1)}|\g u|^{n-\ve}, \nonumber
	\end{align}
	then
	\begin{align}\label{estimate2}
		\frac{1}{\tau^\ve} \int_{B(0,\tau )} |\g u|^{n-\ve} &\leq  \left( \sigma +c_3(\sigma) \right)\int_{B(0,1)} |\g u|^{n-\ve} - \frac{c_3(\sigma)}{\tau^\ve} \int_{B(0,\tau)} |\g u|^{n-\ve}  \\
		&+ c_4(\sigma,\tau,\eps) \|\Omega\|_{L^{(n,1)}(B(0,1))}^\frac{n-\ve}{n-1}\|\g u\|_{L^{(n,\infty)}(B(0,1))}^{n-\ve}. \nonumber
	\end{align}
	Adding \eqref{estimate1} and $10c_2(\tau)$ times the estimate \eqref{estimate2}, we obtain
	\begin{align*}
		& \| \g u \|_{L^{(n,\infty)}(B(0,\tau ))}^{n-\ve} + \frac{10c_2(\tau)}{\tau^\ve}\int_{B(0,\tau )} |\g u|^{n-\ve} \\
		&\leq \left[ \Big( c_1(\sigma,\tau,\eps) + 10c_2(\tau)c_4(\sigma,\tau,\eps)\Big) \|\Omega\|_{L^{(n,1)}(B(0,1))}^\frac{n-\ve}{n-1} + \Gamma\left( \frac{\sigma}{\tau^\frac{n}{n-1}} + \tau^\alpha \right)^{n-\ve} \right] \|\g u\|_{L^{(n,\infty)}(B(0,1))}^{n-\ve} \\
		&+ \Big( c_2(\tau) + 10c_2(\tau)\sigma +10c_2(\tau) c_3(\sigma) \Big) \int_{B(0,1)} |\g u|^{n-\ve}  - \frac{10c_2(\tau) c_3(\sigma)}{\tau^\ve} \int_{B(0,\tau)} |\g u|^{n-\ve}.
	\end{align*}
	The last term goes on the left-hand side,
	\begin{align}\label{iteration1}
		&\| \g u \|_{L^{(n,\infty)}(B(0,\tau ))}^{n-\ve} + \frac{10c_2(\tau)(1+c_3(\sigma))}{\tau^\ve}\int_{B(0,\tau )} |\g u|^{n-\ve} \\
		&\leq \left[ \Big( c_1(\sigma,\tau,\eps) + 10c_4(\sigma,\tau,\eps)\Big) \|\Omega\|_{L^{(n,1)}(B(0,1))}^\frac{n-\ve}{n-1} + \Gamma\left( \frac{\sigma}{\tau^\frac{n}{n-1}} + \tau^\alpha \right)^{n-\ve} \right] \|\g u\|_{L^{(n,\infty)}(B(0,1))}^{n-\ve} \nonumber\\
		&+ c_2(\tau)\Big( 1 + 10 \sigma + 10c_3(\sigma) \Big) \int_{B(0,1)} |\g u|^{n-\ve}. \nonumber
	\end{align}
	We bound the term $\left( \frac{\sigma}{\tau^\frac{n}{n-1}} + \tau^\alpha \right)^{n-\ve}$ independantly of $\ve$ in the following way,
	\begin{align*}
		\left( \frac{\sigma}{\tau^\frac{n}{n-1}} + \tau^\alpha \right)^{n-\ve} &\leq 2^{n-\ve} \left( \frac{\sigma^{n-\ve}}{\tau^{n\frac{n-\ve}{n-1}} } + \tau^{\alpha(n-\ve)} \right) \\
		&\leq 2^n \left( \frac{\sigma^{n-1} }{ \tau^\frac{n^2}{n-1} } + \tau^{\alpha(n-1)} \right).
	\end{align*}
	Using this inequality in \eqref{iteration1}, we obtain
	\begin{align}\label{iteration2}
		&\| \g u \|_{L^{(n,\infty)}(B(0,\tau ))}^{n-\ve} + \frac{10c_2(\tau)(1+c_3(\sigma))}{\tau^\ve}\int_{B(0,\tau )} |\g u|^{n-\ve} \\
		&\leq \left[ \Big( c_1(\sigma,\tau,\eps) + 10c_4(\sigma,\tau,\eps)\Big) \|\Omega\|_{L^{(n,1)}(B(0,1))}^\frac{n-\ve}{n-1} + 2^n \Gamma \left( \frac{\sigma^{n-1} }{ \tau^\frac{n^2}{n-1} } + \tau^{\alpha(n-1)} \right) \right] \|\g u\|_{L^{(n,\infty)}(B(0,1))}^{n-\ve} \nonumber \\
		&+ c_2(\tau)\Big( 1 + 10 \sigma + 10c_3(\sigma) \Big) \int_{B(0,1)} |\g u|^{n-\ve}. \nonumber
	\end{align}
	Now we choose the parameters. We consider first $\tau < \tau_1$ such that
	\begin{align*}
		2^n \Gamma \tau^{\alpha(n-1)} \leq \frac{1}{10^{1000}}.
	\end{align*}
	Then we choose $\sigma$ small enough to obtain
	\begin{align*}
		\sigma < \frac{1}{10^{1000}}, &  & 2^n \Gamma \frac{\sigma^{n-1}}{\tau^\frac{n^2}{n-1}} \leq \frac{1}{10^{1000}}.
	\end{align*}
	Finally we choose $\|\Omega\|_{L^{(n,1)}(B(0,1))}\leq \gamma_2$, where $\gamma_2=\gamma_2(n,N)\in(0,1)$ satisfies
	\begin{align*}
		\Big( c_1(\sigma,\tau,\eps) + 10c_4(\sigma,\tau,\eps)\Big) \gamma_2 \leq \frac{1}{10^{1000}}.
	\end{align*}
	Let
	\begin{align*}
		k &:= 10c_2(\tau)(1+c_3(\sigma)),\\
		\nu &:= \frac{ 1 + \frac{1}{10^{999}} + 10c_3(\sigma) }{ 10(1+c_3(\sigma)) } \in(0,1).
	\end{align*}
	From \eqref{iteration2}, we obtain
	\begin{align*}
		&\| \g u \|_{L^{(n,\infty)}(B(0,\tau ))}^{n-\ve} + \frac{k}{\tau^\ve}\int_{B(0,\tau )} |\g u|^{n-\ve} \\
		&\leq \frac{3}{10^{1000}}  \|\g u\|_{L^{(n,\infty)}(B(0,1))}^{n-\ve} + c_2(\tau)\left( 1 + \frac{1}{10^{999}} + 10c_3(\sigma) \right) \int_{B(0,1)} |\g u|^{n-\ve} \\
		&\leq \nu \left[ \frac{30(1+c_3(\sigma))}{10^{1000} \left( 1 + \frac{1}{10^{999}} + 10c_3(\sigma) \right) }\|\g u\|_{L^{(n,\infty)}(B(0,1))}^{n-\ve} + k \int_{B(0,1)} |\g u|^{n-\ve} \right] \\
		&\leq \nu \left[ \|\g u\|_{L^{(n,\infty)}(B(0,1))}^{n-\ve} + 10c_2(\tau)(1+c_3(\sigma)) \int_{B(0,1)} |\g u|^{n-\ve} \right].
	\end{align*}
\end{proof}

\section{Applications to harmonic maps: Proof of Corollary~\ref{co:harmonicmap}}\label{s:applications}
Here we discuss applications of the \Cref{th:regularityLn1} to $n$-harmonic maps into a manifold.

Let $(\Nr,h)$ be a smooth Riemannian manifold. For any $u \in W^{1,n}(B(0,1);\Nr)$, we let
the Dirichlet energy be
\begin{align*}
	D(u) := \int_{B(0,1)} |du|^n_h.
\end{align*}
Thanks to Nash embedding theorem, we can consider $\Nr$ as a smooth submanifold of $\R^N$, for some $N\geq 1$.

\begin{theorem}
	Let $\Nr$ be a smooth submanifold of $\R^N$ with second fundamental form $A$ satisfying $A\in W^{1,\infty}(\Nr)$. Consider $u \in W^{1,(n,2)}(B^n(0,1);N)$ a critical point of $D$. Then $u$ is $C^{1,\alpha}_{loc}$.
\end{theorem}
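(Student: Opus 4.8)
The plan is to reduce the geometric equation to the framework of \Cref{th:main1}, deduce H\"older continuity of $u$ (thereby also re-proving \Cref{co:harmonicmap}), and then upgrade to $C^{1,\alpha}_{loc}$ by a standard elliptic bootstrap in which continuity plays a crucial role. First I would rewrite the Euler--Lagrange equation $-\div(|\nabla u|^{n-2}\nabla u^i)=|\nabla u|^{n-2}(A(u)(\nabla u,\nabla u))^i$ in the antisymmetric--potential form
\[
-\div(|\nabla u|^{n-2}\nabla u^i)=\Omega_{ij}\cdot|\nabla u|^{n-2}\nabla u^j\qquad\text{in }B(0,1),
\]
by the Rivi\`ere-type construction adapted to the $n$-Laplacian, using the orthogonal projection $P$ onto $T\Nr$ (cf.\ the references in \cite{SS17}). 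In this construction $\Omega_{ij}=-\Omega_{ji}$ and each $\Omega_{ij}$ is a finite sum of terms of the form $C(u)\,\nabla u^{l}$, where the coefficients $C$ are built from $P$ and $A$ and hence belong to $W^{1,\infty}(\R^N)$ precisely because $A\in W^{1,\infty}(\Nr)$ --- this is where that hypothesis enters.

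Next I would verify the hypotheses of \Cref{th:main1}. Since $C(u)\in L^\infty$ and $\nabla u\in L^{(n,2)}(B(0,1))$ we get $\Omega\in L^{(n,2)}(B(0,1))\subset L^n(B(0,1))$ with $\Omega=-\Omega^{T}$; and for the curl condition \eqref{eq:curlcond} it suffices, by the Sobolev-embedding observation preceding \Cref{co:joke2}, to control $\|\curl\Omega\|_{L^{(n/2,1)}}$. For a term $\Omega^\beta_{ij}=C(u)\,\partial_\beta u^{l}$ one has distributionally
\[
\partial_\alpha\Omega^\beta_{ij}-\partial_\beta\Omega^\alpha_{ij}=(DC)(u)\,\partial_\alpha u^{m}\,\partial_\beta u^{l}-(DC)(u)\,\partial_\beta u^{m}\,\partial_\alpha u^{l},
\]
the second-order derivatives of $u$ cancelling, and this lies in $L^{(n/2,1)}$ by the H\"older inequality for Lorentz spaces ($\tfrac{2}{n}=\tfrac1n+\tfrac1n$ and $1=\tfrac12+\tfrac12$), because $DC\in L^\infty$ and $\nabla u\in L^{(n,2)}$. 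Hence \Cref{th:main1} applies --- equivalently, passing through the Coulomb gauge of \Cref{co:reductiontoLn1} and then \Cref{th:regularityLn1} --- and $u$ is H\"older continuous on every compact subset of $B(0,1)$.

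Finally I would bootstrap to $C^{1,\alpha}_{loc}$. Write $-\div(|\nabla u|^{n-2}\nabla u)=f$ with $f:=|\nabla u|^{n-2}A(u)(\nabla u,\nabla u)$, so $|f|\le\|A\|_{L^\infty}|\nabla u|^{n}$, and fix $B(x_0,2\rho_0)\Subset B(0,1)$. Testing on $B(x_0,\rho)$, $\rho\le\rho_0$, against $\eta^{n}(u-u(x_0))$ with a standard cutoff $\eta$, and using $|u-u(x_0)|\le\omega(\rho)\to0$ (the modulus of continuity of $u$) together with Young's and Sobolev--Poincar\'e's inequalities, yields the Caccioppoli-type reverse H\"older inequality
\[
\mvint_{B(x_0,\rho/2)}|\nabla u|^{n}\aleq\Big(\mvint_{B(x_0,\rho)}|\nabla u|^{n/2}\Big)^{2}+\|A\|_{L^\infty}\,\omega(\rho)\mvint_{B(x_0,\rho)}|\nabla u|^{n},
\]
in which the last, scale-critical term is absorbed for $\rho$ small thanks to continuity; Gehring's lemma then gives $\nabla u\in L^{n+\eps_0}_{loc}$ for some $\eps_0>0$. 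Consequently $f\in L^{(n+\eps_0)/n}_{loc}$, and iterating the Calder\'on-Zygmund theory for the $n$-Laplace system --- from $\nabla u\in L^{s}_{loc}$ with $s>n$ one gets $f\in L^{s/n}_{loc}$, hence $\nabla u\in L^{s'}_{loc}$ with $s'=\tfrac{n(n-1)s}{n^2-s}>s$, and $s'\to\infty$ as $s\uparrow n^2$ --- one reaches $\nabla u\in L^{s_\ast}_{loc}$ with some $s_\ast>n^2$, i.e.\ $f\in L^{q_\ast}_{loc}$ with $q_\ast>n$. The interior $C^{1,\alpha}$-regularity theory for the $n$-Laplace system with such a right-hand side (Uhlenbeck, DiBenedetto, Tolksdorf, Lieberman; see also the potential estimates of Kuusi--Mingione \cite{KM18}) then gives $\nabla u\in C^{0,\alpha}_{loc}(B(0,1))$, that is $u\in C^{1,\alpha}_{loc}$.

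The main obstacle is the first part together with the Lorentz bookkeeping: one must carry out the Rivi\`ere/moving-frame reduction so that $\Omega$ genuinely has the product structure $C(u)\nabla u$ with $C\in W^{1,\infty}$, so that $\Omega\in L^{(n,2)}$ and $\curl\Omega\in L^{(n/2,1)}$ both close under the borderline Lorentz--H\"older inequality --- and it is exactly the assumption $A\in W^{1,\infty}(\Nr)$ that makes this work. A second delicate point is that the step from continuity to higher integrability genuinely requires continuity and not merely $u\in W^{1,n}$: without it the term $\omega(\rho)\mvint_{B(x_0,\rho)}|\nabla u|^{n}$ in the Caccioppoli estimate is scale-critical and cannot be absorbed, which is precisely the obstruction underlying the extra differentiability hypotheses in the literature surveyed in \cite{SS17}.
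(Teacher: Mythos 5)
Your proposal is correct and follows essentially the same route as the paper: antisymmetrize the Euler--Lagrange equation so that $\Omega_{ij}$ has the form $C(u)\,\nabla u$ with $C$ built from $A$ and $W^{1,\infty}$, verify $\Omega\in L^{(n,2)}$ and $d\Omega\in L^{(n/2,1)}$ via the Lorentz--H\"older inequality, invoke \Cref{th:main1} to get continuity, and then bootstrap to $C^{1,\alpha}_{\mathrm{loc}}$ by the Hardt--Lin Caccioppoli/Gehring argument. The only cosmetic differences are that the paper realizes the antisymmetrization directly by setting $\Omega_{ij}:=A^i_{jk}(u)\,du^k-A^j_{ik}(u)\,du^k$ (using the orthogonality $A^i_{jk}\nabla u^i=0$) rather than passing through the projection $P$, and it cites \cite{hardt1987} for the final bootstrap rather than spelling it out as you do.
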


\begin{proof}
	We follow Rivi\`ere's argument in \cite{R07}. The Euler-Lagrange equation is given by
	\begin{align*}
		-\lap_n u &= |\g u|^{n-2} A(u)(\g u, \g u).
	\end{align*}
	Componentwise, this system is understood as
	\begin{align*}
		\forall i\in \{1,\ldots,N\}, \ \ \ \ \ -\di\left( |\g u|^{n-2} \g u^i \right) &= |\g u|^{n-2} A^i_{jk}(u) \scal{\g u^j }{\g u^k}.
	\end{align*}
	The second fundamental form takes values into $(T\Nr)^\perp$, so we have the orthogonality relation $A^i_{jk} \g u_i = 0$, for any $j,k\in\{1,\ldots,N\}$. Consequently, for any $i\in\{1,\ldots,N\}$, it holds
	\begin{align*}
		-\di\left( |\g u|^{n-2} \g u^i \right) &= |\g u|^{n-2} \scal{ A^i_{jk}(u) \g u^k - A^j_{ik}(u) \g u^k }{\g u^j}.
	\end{align*}
	If we set $\Omega_{ij} := A^i_{jk}(u) du^k - A^j_{ik}(u) du^k$, we obtain an skew-symmetric 1-form such that its exterior derivative is given by
	\begin{align*}
		d\Omega_{ij} &= \left[ (\dr_\alpha A^i_{jk})(u)- (\dr_\alpha A^j_{ik})(u) \right] du^\alpha\wedge du^k.
	\end{align*}
	By assumption the assumption $\g A\in L^\infty$ and $\g u \in L^{(n,2)}$, we obtain $d\Omega_{ij} \in L^{\left( \frac{n}{2}, 1 \right)}(B(0,1))$. Thanks to \Cref{th:main1}, we obtain that $u$ is continuous. The smallness assumption is satisfied on small balls by absolute continuity of the $L^{(n,2)}$-norm, see e.g. \cite[Theorem 8.5.1.]{PKJF2013}. Following the proof of \cite[Theorem 3.1]{hardt1987}, we obtain the $C^{1,\alpha}$-regularity.
\end{proof}

\section{Applications to H-System: Proof of Corollary~\ref{co:Hsystem}}\label{s:proofHsystem}

The $H$-systems are a closely related problem to $n$-harmonic maps. Instead of looking at arbitrary critical points of the Dirichlet energy for maps from $B^n(0,1)$ into $\R^{n+1}$, we consider the additional constraint of fixing the volume
\begin{align*}
	V(u) &= \int_{B(0,1)} \scal{u}{\dr_1 u\times \dr_2 u\times \cdots \times \dr_n u }.
\end{align*}
Under the constraint $V(u)=V_0$, the Euler-Lagrange equation is given by 
\begin{align*}
	-\lap_n u &= H \dr_1 u\times \dr_2 u\times \cdots \times \dr_n u,
\end{align*}
where $H$ is a constant depending on $V_0$.

\begin{theorem}
	Let $H\in W^{1,\infty}(\R^{n+1};\R)$. Assume $u\in W^{1,(n,\frac{n}{n-1})}(B^n(0,1);\R^{n+1})$ satisfies
	\begin{align*}
		-\lap_n u &= H(u) \dr_1 u\times \dr_2 u\times \cdots \times \dr_n u.
	\end{align*}
	Then $u$ is continuous.
\end{theorem}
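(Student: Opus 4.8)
The plan is \emph{not} to invoke \Cref{th:main1} — its curl hypothesis \eqref{eq:curlcond} is not a reasonable assumption here, as already noted in the introduction — but instead to adapt the proof of \Cref{th:regularityLn1}, run with the trivial rotation $Q\equiv I$, to a right-hand side of divergence type. The crucial point is that the cross-product nonlinearity can be written as $f+\di G$ with $f\in L^1$ and $G\in L^{\frac n{n-1}}$, which is precisely the form already handled by \Cref{la:easyiwaniec} and \Cref{Lninfty_estimate}.

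\textbf{Step 1 (divergence form of the nonlinearity).} Writing $\{j_1,\dots,j_n\}=\{1,\dots,n+1\}\setminus\{i\}$, the component $(\dr_1 u\times\cdots\times\dr_n u)^i$ is, up to sign, the coefficient of the $n$-form $du^{j_1}\wedge\cdots\wedge du^{j_n}$, which is exact: for any constant $c$, $du^{j_1}\wedge\cdots\wedge du^{j_n}=d\beta^i$ with $\beta^i=(u^{j_1}-c)\,du^{j_2}\wedge\cdots\wedge du^{j_n}$. Hence
\[
H(u)\,(\dr_1 u\times\cdots\times\dr_n u)^i=\di G^i+f_i,\qquad G^i=\pm\hdg\!\big(H(u)\beta^i\big),\quad f_i=\mp\hdg\!\big(dH(u)\wedge\beta^i\big),
\]
and since $H\in W^{1,\infty}$ we get the pointwise bounds $|G^i|\aleq\|H\|_{L^\infty}\,|u-c|\,|\g u|^{n-1}$ and $|f_i|\aleq\|\g H\|_{L^\infty}\,|u-c|\,|\g u|^{n}$. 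Thus $u$ solves $-\di(|\g u|^{n-2}\g u^i)=f_i+\di G^i$ in $B$, which is the setting of \Cref{la:easyiwaniec} and \Cref{Lninfty_estimate} with $Q\equiv I$.

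\textbf{Step 2--3 (smallness of the forcing and iteration).} Fix $B(x,r)\subset B$ and take $c=(u)_{B(x,r)}$. Since $n\ge3$ one has $|\g u|^{n-1}\in L^{(\frac n{n-1},\frac n{(n-1)^2})}(B(x,r))\subsetneq L^{\frac n{n-1}}(B(x,r))$ and $|\g u|^{n}\in L^{(1,\frac1{n-1})}(B(x,r))\subsetneq L^{1}(B(x,r))$ — with genuine room to spare in the second index — while the borderline Lorentz--Sobolev--Poincaré inequality gives $u-(u)_{B(x,r)}\in\exp L^{n}(B(x,r))$ with norm $\aleq\varepsilon(r):=\|\g u\|_{L^{(n,\frac n{n-1})}(B(x,r))}$. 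Multiplying an $\exp L^n$ factor against the above Lorentz functions keeps them in $L^{\frac n{n-1}}$, resp.\ $L^1$, so $\|G\|_{L^{\frac n{n-1}}(B(x,r))}\aleq\|H\|_{L^\infty}\varepsilon(r)^{n}$ and $\|f\|_{L^1(B(x,r))}\aleq\|\g H\|_{L^\infty}\varepsilon(r)^{n+1}$, and $\sup_{B(x,r)\subset B}\varepsilon(r)\to0$ as $r\to0$ by absolute continuity of the $L^{(n,\frac n{n-1})}$-norm. I would then repeat the scheme of \Cref{s:iteration} with $Q\equiv I$ and these $f,G$, but carrying $\|\g u\|_{L^{(n,\frac n{n-1})}(B(x,\rho))}$ as the master quantity in place of $\|\g u\|_{L^{(n,\infty)}(B(x,\rho))}$ (the sharp-maximal-function input \Cref{goingtoLpq} is already stated for arbitrary Lorentz exponents, and \Cref{la:easyiwaniec} is insensitive to the upgrade); combining the resulting estimates as in the proof of \Cref{pr:iteration}, and choosing first $\tau$, then $\sigma$, then a small $r_0$ with $\varepsilon(r_0)$ below the relevant thresholds, yields $k>0$, $\lambda,\nu\in(0,1)$, $\beta>0$ with
\[
\|\g u\|_{L^{(n,\frac n{n-1})}(B(x,\lambda\rho))}^{n-\ve}+\tfrac{k}{(\lambda\rho)^\ve}\!\!\int_{B(x,\lambda\rho)}\!\!|\g u|^{n-\ve}\ \le\ \nu\Big[\|\g u\|_{L^{(n,\frac n{n-1})}(B(x,\rho))}^{n-\ve}+\tfrac{k}{\rho^\ve}\!\!\int_{B(x,\rho)}\!\!|\g u|^{n-\ve}\Big]
\]
for all $B(x,\rho)\subset B(x_0,r_0)$; the standard iteration gives power decay of the left-hand side, hence continuity of $u$ by \cite[Corollary 3.2]{han2011}, and a covering of $B$ finishes the proof.

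\textbf{Main obstacle.} The delicate part is Steps~2--3: unlike in \Cref{pr:iteration}, the forcing now depends on $u$ through $u-(u)_{B(x,r)}$ and carries no Morrey-type decay of its own, so one must arrange that it is absorbed with a \emph{fixed} contraction factor $\nu<1$. This forces the iteration to be run with the $L^{(n,\frac n{n-1})}$-norm (not weak $L^n$), since that is the norm controlling the forcing bounds $\varepsilon(r)^{n}$, $\varepsilon(r)^{n+1}$ of Step~2; consequently the sharp-maximal-function machinery of \Cref{s:lorentz} (in particular the Vitali-covering estimates inside \Cref{estimate_norm_MsGu}) has to be transferred from the $L^{(n,\infty)}$-target to the $L^{(n,\frac n{n-1})}$-target — the one place that must be verified rather than quoted. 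This is also exactly where the hypothesis $\g u\in L^{(n,\frac n{n-1})}$, rather than merely $\g u\in L^{n}$, is used: it is what puts $u$ into $\exp L^{n}_{\mathrm{loc}}$ and produces the two ``room to spare'' Lorentz embeddings above.
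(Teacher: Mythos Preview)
Your overall strategy — write the cross product as $f+\di G$ and feed it into \Cref{la:easyiwaniec} and \Cref{Lninfty_estimate} with $Q\equiv I$ — is exactly the paper's plan. Where you diverge is in \emph{how} you manufacture $f$ and $G$, and this difference matters.

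The paper does \emph{not} use the elementary ``pull out one factor $(u^{j_1}-c)$'' trick. Instead it exploits the div-free structure of the cofactor field $B(u)$ (so $\Rz\cdot B(u)=0$) together with the Coifman--Rochberg--Weiss commutator: after a cutoff,
\[
H(\tilde u)\,\nabla\tilde u\cdot B(\tilde u)=\underbrace{-[\Rz,H(\tilde u)](\laps{1}\tilde u)\,B(\tilde u)}_{=:f}\;+\;\underbrace{\Rz\bigl(H(\tilde u)\laps{1}\tilde u\bigr)\,B(\tilde u)}_{=:g=\di G}.
\]
\Cref{th:crwlorentz} gives $\|f\|_{L^1}\aleq\|\nabla u\|_{L^{(n,2)}}^{2}\,\|\nabla u\|_{L^{(n,\infty)}}^{\,n-1}$, and a second commutator estimate by duality gives $\|G\|_{L^{n/(n-1)}}\aleq\|\nabla u\|_{L^{(n,\frac{n}{n-1})}}\,\|\nabla u\|_{L^{(n,\infty)}}^{\,n-1}$. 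The point is that both bounds are \emph{linear in $\|\nabla u\|_{L^{(n,\infty)}}^{\,n-1}$} with a coefficient that is small by absolute continuity of the $L^{(n,\frac{n}{n-1})}$-norm. Hence \Cref{la:easyiwaniec} and \Cref{Lninfty_estimate} apply \emph{verbatim}, and the iteration of \Cref{pr:iteration} closes with $\|\nabla u\|_{L^{(n,\infty)}}$ as the master quantity — no upgrade of the machinery is needed.

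Your decomposition yields $|G|\aleq|u-c|\,|\nabla u|^{n-1}$ and $|f|\aleq|u-c|\,|\nabla u|^{n}$, and you then want $\|G\|_{L^{n/(n-1)}}\aleq\varepsilon(r)^n$ with $\varepsilon(r)=\|\nabla u\|_{L^{(n,\frac{n}{n-1})}(B(x,r))}$. Two genuine gaps appear. First, the product claim ``$\exp L^n\cdot L^{(n/(n-1),\,n/(n-1)^2)}\subset L^{n/(n-1)}$'' is not a standard H\"older inequality (Lorentz H\"older with an $L^{(\infty,q)}$ factor is vacuous for $q<\infty$); it would need an Orlicz--Lorentz multiplication theorem that you do not supply. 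Second — and this is the one you yourself flag — your estimates are powers of $\varepsilon(r)$, not multiples of $\|\nabla u\|_{L^{(n,\infty)}}$, so you are forced to promote the iteration to the $L^{(n,\frac{n}{n-1})}$-scale. But the Vitali covering step in \Cref{estimate_norm_MsGu} produces only a \emph{weak}-type bound $|\{(M_1 f)^{1/(n-1)}>\lambda\}|\aleq\lambda^{-n}\|f\|_{L^1}^{n/(n-1)}$; passing from $f\in L^1$ to $(M_1 f)^{1/(n-1)}\in L^{(n,q)}$ with $q<\infty$ is the endpoint of the fractional maximal inequality and fails in general. You would need $f$ in a genuine Zygmund class and a corresponding Orlicz$\to$Lorentz bound for $M_1$, which again you do not provide. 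The paper's commutator route sidesteps both issues at once.
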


\begin{proof}
The right-hand side of the $H$-system can be written as
\[
H(u)  B(u)^i \g u_i,
\]
where $B(u)$ satisfies :
\begin{itemize}
	\item a pointwise bound : $|B(u)|\aleq |\g u|^{n-1}$,
	\item a pointwise identity : for any $i\in\inter{1}{n+1}$, if $M^i := (\g u^1 ,\cdots ,\g u^{i-1},\g u^{i+1},\cdots,\g u^{n+1})$ then 
	\begin{align*}
		M^i B(u)^i = (\det M^i) I_n.
	\end{align*}
\end{itemize}
By \cite[Lemma 1.9]{bojarski1983}, $B(u)$ is divergence free, so $\Rz \cdot B(u)=0$. \\

Consider a cut-off function $\eta\in C^\infty_c(B(1))$ such that $\eta = 1$ in $B(\frac{1}{2})$, $|\eta|\leq 1$ and $|\g \eta|\aleq 1$. Let 
\begin{align*}
	\tilde{u} &= \eta\left( u - (u)_{B(\frac{1}{2})} \right) +  (u)_{B(\frac{1}{2})}.
\end{align*}
We decompose the following quantity on $\R^n$ :
\[
H(\tilde{u}) \nabla \tilde{u}\ B(\tilde{u}) = \underbrace{\underbrace{-[\Rz,H(\tilde{u})](\laps{1} \tilde{u})}_{=\Omega_1}\, B(\tilde{u})}_{=: f} + \underbrace{\Rz(H(\tilde{u}) \laps{1} \tilde{u})\, B(\tilde{u})}_{=:g}
\]
For the first term we observe by \Cref{th:crwlorentz} :
\[
\|\Omega_1 \|_{L^{(n,1)}(\R^n)} \aleq \|\nabla H(\tilde{u})\|_{L^{(n,2)}(\R^n)} \|\nabla \tilde{u}\|_{L^{(n,2)}(\R^n)} \aleq \|\nabla u\|_{L^{(n,2)}(B(0,1))}^2.
\]
We estimate $f$ by Hölder inequality :
\[
\|f\|_{L^1(\R^n)} \aleq \|\nabla u\|_{L^{(n,2)}(B(0,1))}^2 \|\nabla u\|_{L^{(n,\infty)}(B(0,1))}^{n-1}.
\]
We want to write $g$ as a divergence term. We estimate it by duality. For any $\vp\in C^\infty_c(\R^n)$, it holds
\begin{align*}
	\int_{\R^n} \Rz(H(\tilde{u}) \laps{1} \tilde{u})\, B(\tilde{u}) \varphi &= \int_{\R^n} [\Rz,\vp](H(\tilde{u}) \laps{1} \tilde{u})\, B(\tilde{u})\\
	&\aleq \| B(\tilde{u}) \|_{L^{(\frac{n}{n-1},\infty)}(\R^n)} \left\| [\Rz,\vp](H(\tilde{u}) \laps{1} \tilde{u}) \right\|_{L^{(n,1)}(\R^n)}\\
	&\aleq \|\nabla \tilde{u}\|_{L^{(n,\infty)}(\R^n)}^{n-1} \|\g \vp\|_{L^n(\R^n)} \|\nabla \tilde{u}\|_{L^{(n,\frac{n}{n-1})}(\R^n) }.
\end{align*}
By duality, there exists $G \in L^\frac{n}{n-1}(\R^n)$ such that
\[
g = \div G,
\]
with the estimate
\begin{align*}
	\|G\|_{L^{\frac{n}{n-1}}(\R^n)} &\aleq \|\nabla \tilde{u}\|_{L^{(n,\infty)}(\R^n)}^{n-1} \|\nabla \tilde{u}\|_{L^{(n,\frac{n}{n-1})}(\R^n) } \\
	&\aleq \|\nabla u\|_{L^{(n,\infty)}(B(1))}^{n-1} \|\nabla u\|_{L^{(n,\frac{n}{n-1})}(B(1)) }.	
\end{align*}
Namely we may take $G = -\Rz \brac{\lapms{1} g}$.

We observe that w.l.o.g. we may assume $\|\nabla u\|_{L^{(n,\frac{n}{n-1})}(B(1)) } \ll 1$, by absolute continuity of the $L^{(n,2)}$-norm, see e.g. \cite[Theorem 8.5.1.]{PKJF2013}.

Fix $\sigma\in(0,1)$ and $\tau \in (0,\tau_1)$. Thanks to \Cref{la:easyiwaniec}, we obtain the estimate
\begin{align*}
	\tau^{-\eps} \|\g u \|_{L^{n-\eps}(B(\frac{1}{4}))}^{n-\eps} =&\tau^{-\eps} \|\g \tilde{u} \|_{L^{n-\eps}(B(\frac{1}{4}))}^{n-\eps} \\
	\aleq &\left( \sigma + C_1(\sigma) \right) \|\g u\|_{L^{n-\eps}(B(1))}^{n-\eps} - C_1(\sigma) \tau^{-\eps}\|\g u \|_{L^{n-\eps}(B(1))}^{n-\eps}\\
	&+ C_2(\eps,\sigma) \|\g u\|_{L^{(n,\frac{n}{n-1})}(B(1))}^\frac{n-\eps}{n-1} \|\g u \|_{L^{(n,\infty)}(B(1))}^{n-\eps}.
\end{align*}
Thanks to \Cref{Lninfty_estimate}, it holds
\begin{align*}
	\|\g u \|_{L^{(n,\infty)}(B(\frac{1}{4}))} =& \|\g \tilde{u} \|_{L^{(n,\infty)}(B(\frac{1}{4}))} \\
	\aleq & C_3(\sigma,\eps,\tau)\|\g u\|_{L^{(n,\frac{n}{n-1})}(B(1))}^\frac{1}{n-1} \|\g u \|_{L^{(n,\infty)}(B(1))} + C_4(\tau)\|\g u \|_{L^{n-\eps}(B(1))}\\
	&+ \Gamma\left( \frac{\sigma}{\tau^\frac{n}{n-1}} + \tau^\alpha\right) \|\g u\|_{L^{(n,\infty)}(B(1))}.
\end{align*}
The rest of the proof goes like the proof of \Cref{pr:iteration}.
\end{proof}


\bibliographystyle{abbrv}%
\bibliography{bib}%

\end{document}